\numberwithin{equation}{section}
\numberwithin{subsection}{section}
\newtheorem{thm}{Theorem}
\newtheorem{lemma}{Lemma}
\newtheorem{prop}{Proposition}
\newtheorem{cor}{Corollary}
\newtheorem*{thm*}{Theorem}
\newtheorem*{cor*}{Corollary}
\newtheorem{thmA}{Theorem}
\newtheorem{quest}{Question}
\theoremstyle{definition}
\newtheorem{remark}{Remark}
\title{Sub-Hardy Hilbert spaces in the non-commutative unit row-ball}
\author{Michael T. Jury\thanks{Supported by NSF grant DMS-1900364}}
\affil[1]{\footnotesize University of Florida}
\author[2]{Robert T.W. Martin\thanks{Supported by NSERC grant 2020-05683}}
\affil[2]{\footnotesize University of Manitoba}
\date{}
\begin{document}


\maketitle

\begin{abstract}
In the classical Hardy space theory of square--summable Taylor series in the complex unit disk there is a circle of ideas connecting Szeg\"o's theorem, factorization of positive semi-definite Toeplitz operators, non-extreme points of the convex set of contractive analytic functions, de Branges--Rovnyak spaces and the Smirnov class of ratios of bounded analytic functions in the disk. We extend these ideas to the multi-variable and non-commutative setting of the full Fock space, identified as the \emph{free Hardy space} of square--summable power series in several non-commuting variables.  As an application, we prove a Fej\'er--Riesz style theorem for non-commutative rational functions. 
\end{abstract}

\section{Introduction} \label{sect:intro}

The purpose of this paper is to consider some factorization problems for non-commutative analogues of Toeplitz operators, in terms of various interconnections between these operators and (non-commutative versions of) the theory of de Branges--Rovnyak spaces. To explain what we mean, we will first recall some well-known results from the classical theory of Toeplitz operators. 

A bounded Toeplitz operator, $T$, is the compression of multiplication by an $L ^\infty$ function, $f$, on the complex unit circle, $\partial \mathbb{D}$, to the Hardy space, $H^2$, of $L^2$ functions which extend analytically to the complex unit disk. That is, $T= T_f := P_{H^2} M_f | _{H^2}$ and $f \in L ^\infty (\partial \mathbb{D} )$ is called the \emph{symbol} of $T$. Here, $H^2$ can also be defined as the Hilbert space of analytic functions in the complex unit disk, $\mathbb{D}$, with square--summable Taylor series coefficients at $0$, and $H^2$ embeds isometrically into $L^2$ of the circle by taking non-tangential limits almost everywhere with respect to normalized Lebesgue measure, $m$. A theorem of Brown and Halmos, \cite[Theorem 6]{BH-Toep}, characterizes the bounded Toeplitz operators as the bounded linear operators, $T \in \mathscr{L} (H^2)$, obeying the simple algebraic condition:
$$ S^* T S = T; \quad \quad S := M_z, $$ where $S$, the isometry of multiplication by the independent variable, $z$, is called the \emph{shift} on $H^2$.
We employ the notation $h(S) = T_h$, clearly $p(S) = T_p$ for any complex polynomial $p \in \mathbb{C} [z ]$. We say that $T_f \geq 0$ is \emph{factorable} if we can write $ T_f = T_{\overline{h}} T_h = h(S) ^* h(S)$, for some outer $h \in H^\infty$, where $H^\infty$ is the unital Banach algebra of uniformly bounded holomorphic functions in the disk. Here, we say an element, $h \in H^\infty$ is \emph{outer} if the analytic Toeplitz operator $h(S) =T_h$ has dense range and we say $h$ is \emph{inner} if $h(S)$ is an isometry.

A celebrated theorem of G. Szeg\"o, (later strengthened by Kolmogoroff and Kre\u\i n) implies that $T_f \geq 0$ is factorable if and only if its symbol, $f \geq 0$, is log--integrable, $$ \int _{\partial \mathbb{D} } \log f  > - \infty, $$ \cite{Szego}, see \cite[Chapter IV]{Hoff}.  As a particular corollary, suppose $T=T_f$ has a {\em factorable minorant}: that is, there is an $H^\infty$ function $g$ so that $T_f\geq T_g^*T_g$. In symbols, this means that $f\geq |g|^2$ on the unit circle. Since $\int \log |g|^2\, dm>-\infty$, the same holds for $f$, and thus there in fact exists an (outer) $a\in H^\infty$ so that $T_f=T_a^*T_a$. Thus,

\begin{thmA} \label{Opszego}
$T = T_f \geq 0$ is factorable if and only if it has a non-trivial factorable minorant. 
\end{thmA}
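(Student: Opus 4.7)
The plan is to handle the two directions separately; the paragraph preceding the theorem already sketches the core idea, and the proof amounts to spelling out those steps. The forward implication is immediate: if $T_f = T_a^* T_a$ with outer $a \in H^\infty$, then $T_f$ is trivially its own nontrivial factorizable minorant (it is nonzero because $a$ is).

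For the converse, I would assume a nonzero $g \in H^\infty$ satisfies $T_f \geq T_g^* T_g$ and first reduce the operator inequality to a pointwise inequality of symbols. The key observation is that $T_g^* T_g = T_{|g|^2}$ for $g \in H^\infty$, since $M_g$ leaves $H^2$ invariant: for $p, q \in H^2$,
\[
\langle T_g^* T_g p, q \rangle = \langle g p, g q \rangle_{H^2} = \int_{\partial \D} |g|^2 \, p\, \ov{q}\, dm = \langle T_{|g|^2} p, q \rangle.
\]
Hence $T_{f - |g|^2} \geq 0$, and testing against polynomials shows $\int (f - |g|^2)|p|^2 \, dm \geq 0$ for all $p \in \C[z]$, which forces $f \geq |g|^2$ almost everywhere on $\partial \D$.

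The second step invokes the classical fact, a consequence of Jensen's inequality, that a nonzero $g \in H^\infty$ has log--integrable boundary values, $\int_{\partial \D} \log|g|\, dm > -\infty$. Combined with $f \geq |g|^2$ this gives
\[
\int_{\partial \D} \log f \, dm \; \geq \; 2 \int_{\partial \D} \log|g| \, dm \; > \; -\infty,
\]
and Szeg\"o's theorem as cited in the text then produces an outer $a \in H^\infty$ with $T_f = T_a^* T_a$.

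There is no substantive obstacle: the theorem packages Szeg\"o's theorem together with log--integrability of $H^\infty$ boundary values. The only point requiring care is the clean identification $T_g^* T_g = T_{|g|^2}$ for analytic $g$, which is precisely what allows the operator minorant inequality to descend to a pointwise symbol inequality on the circle.
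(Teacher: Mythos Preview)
Your proposal is correct and follows essentially the same route as the paper: the paragraph immediately preceding the theorem statement already gives the argument you have written out, namely that $T_f \geq T_g^*T_g$ forces $f \geq |g|^2$ a.e., log--integrability of $|g|^2$ then yields log--integrability of $f$, and Szeg\"o's theorem finishes. The only addition in your write-up is the explicit verification that $T_g^*T_g = T_{|g|^2}$ for analytic $g$, which the paper takes for granted.
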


Factorizability of positive semi-definite Toeplitz operators is intimately connected to the theory of extreme points of the closed unit ball of $H^\infty$, as well as to the factorization of functions in the Smirnov class. Indeed, it is known that a contractive analytic function, $b \in [ H^\infty ] _1$, is an extreme point if and only if $1 -|b|$ is not log--integrable on the circle and this is in turn equivalent to $b$ being \emph{column--extreme} (CE) in the sense that if $a \in H^\infty$ is any multiplier and $c := \left( \begin{smallmatrix} b \\ a \end{smallmatrix} \right)$ is contractive as a multiplier from one to two copies of $H^2$, then $a \equiv 0$. Note that $c$ will be contractive if and only if $I - b(S) ^* b(S) \geq a(S) ^* a(S)$, so that $b$ is not CE if and only if the Toeplitz operator $T = I - b(S) ^* b(S)$ has a non-trivial factorable minorant. Hence there are three possibilities: (i) $I - b(S) ^* b(S)$ vanishes identically, in which case $b$ is inner, (ii) $I-b(S)^* b(S) \neq 0$ has no non-trivial factorable minorant, in which case $b$ is an extreme point (or equivalently CE), or, (iii) $b$ is not an extreme point and $I - b(S) ^* b(S)$ is factorable. Given a non-extreme $b \in [ H^\infty ] _1$, the existence of an outer $a$ so that the column $c = \left( \begin{smallmatrix} b \\ a \end{smallmatrix} \right)$ is inner is in fact equivalent to the statement that a positive semi-definite Toeplitz operator is factorable if and only if it has factorable minorant.  Namely, if $T = T_f \geq 0$ has a factorable minorant and we set $\mu = f \cdot m$ as before, then $\mu =\mu _b$ is the \emph{Aleksandrov--Clark measure} of a contractive $b \in [H^\infty ] _1$. Here, recall that there is (essentially) a bijection between positive, finite and regular Borel measures on the circle and contractive analytic functions in the disk \cite{Clark,Aleks1,Aleks2}. Since $\mu _b$ is absolutely continuous, Fatou's theorem implies that $b$ is not inner and that 
$$ T = (I - b(S) ^* ) ^{-1} (I - b(S) ^* b(S) ) (I - b(S) ) ^{-1}. $$ From this formula, it is clear that since $T$ has a factorable minorant, so does the numerator $I- b(S) ^* b(S)$, so that $b$ cannot be an extreme point. Moreover, $T$ will be factorable if and only if $I - b(S) ^* b(S)$ is factorable, \emph{i.e.} if and only if there exists an outer $a$ so that $c = \left( \begin{smallmatrix} b \\ a \end{smallmatrix} \right)$ is inner. Assuming that $b \in [ H^\infty ] _1$ is non-CE so that $I - b(S) ^* b(S)$ is factorable by Theorem \ref{Opszego}, there is then an outer function, $a \in [ H^\infty ] _1$ so that $a(0) >0$ and $a(S)^* a(S) = I- b(S) ^* b(S)$. This outer function, $a$, is uniquely determined by $b$ and we call $a$ the \emph{Sarason outer function of $b$} after D. Sarason who performed a detailed analysis and computed the Taylor coefficients of $a$ in \cite{Sarason-afun, Sarason-doubly}.

The Smirnov class, $\mathscr{N} ^+$, consists of the analytic functions in the disk obtained as ratios of $H^\infty$ functions with outer denominators. By a theorem of D. Sarason, any $f \in \mathscr{N} ^+$ factors uniquely as $f = \frac{b}{a}$, where $b, a \in [ H^\infty ] _1$ are contractive, $a$ is outer and the column $c := \left( \begin{smallmatrix} a \\ b \end{smallmatrix} \right)$ is inner \cite[Proposition 3.1]{Sarason-unbdd}. This is essentially a consequence of the general form of Beurling's theorem -- given any $f = \frac{b_0}{a_0} \in \mathscr{N} ^+$, multiplication by $f$ is densely--defined on $\mathrm{Ran} \, a _0 (S)$ as $a _0$ is outer and it is easily seen to be closable. Applying the Beurling theorem to the closed $S \otimes I_2-$invariant graph, $\mathrm{Gr} \, f(S)  \subseteq H^2 \otimes \mathbb{C} ^2$, where $f(S)$ is the closure of multiplication by $f$, gives a two-component column--inner $c = \left( \begin{smallmatrix} a \\ b \end{smallmatrix} \right)$ so that $\mathrm{Gr} \, f(S)  = \mathrm{Ran} \, c(S)$ and $a$ is necessarily outer since $T_f =f(S)$ is densely--defined. One can further show that, equipping the domains of $f(S)$ and $f(S)^*$ with the graph--norm yields $$ \mathrm{Dom} \, f(S) = \mathscr{M} (a) \quad \mbox{and} \quad \mathrm{Dom} \, f(S) ^* = \mathscr{H} (b), $$ where $\mathscr{M} (a) = \mathscr{M} (a(S))$ is the operator--range space of $a(S)$ and $\mathscr{H} (b) := \mathscr{M} \, \sqrt{I - b(S) b(S) ^* }$ is the \emph{de Branges--Rovnyak space} of $b$, the complementary space of $b(S)$. Here, if $A : \mathcal{H} \rightarrow \mathcal{J}$ is a linear operator, the operator--range space, $\mathscr{M} (A)$, is a Hilbert space equal to $\mathrm{Ran} \, A$ as a vector space, equipped with the norm that makes $A$ a co-isometry onto its range. Both $\mathscr{M} (a)$ and $\mathscr{H} (b)$ are contractively contained in $H^2$. In summary, given any non-extreme contractive $b \in H^\infty$, there is a unique outer $a \in [ H^\infty ] _1$ so that $c = \left( \begin{smallmatrix} a \\ b \end{smallmatrix} \right)$, is inner, $\mathrm{Ran} \, c(S) = \mathrm{Gr} \, f(S) $, for $f = b /a \in \mathscr{N} ^+$ and equipping the domains of $f(S)$ and $f(S)^*$ with the graph--norm yields $\mathrm{Dom} \, f(S) = \mathscr{M} (a)$ and $\mathrm{Dom} \, f(S) ^* = \mathscr{H} (b)$. In particular, the de Branges--Rovnyak space of any non-extreme $b \in [ H^\infty ] _1$ is the domain of the adjoint of some (generally unbounded) closed Smirnov multiplier and vice versa. \\

It turns out that a surprising amount of the above discussion can be carried over to the non-commutative realm, where we consider so-called ``multi-Toeplitz'' operators, first considered by Popescu. We now briefly describe the non-commutative framework. 

The full Fock space, $\mathbb{H} ^2 _d$, is a natural multi-variable and non-commutative (NC) generalization of $H^2$. The Fock space can be defined as the Hilbert space of all square--summable power series in several non-commuting variables, $\mathfrak{z} := (\mathfrak{z} _1 , \cdots , \mathfrak{z} _d )$, for some fixed $d \in \mathbb{N} \cup \{ +\infty \}$. Namely, let $\mathbb{F} ^d$ denote the \emph{free monoid}, the set of all finite \emph{words} in the $d$ letters $1, ... , d$. Any word, $\omega \in \mathbb{F} ^d$, has the form $\omega = i_1 \cdots i_n$,  $i_k \in \{ 1 , \cdots , d \}$ and the unit of $\mathbb{F} ^d$ is the \emph{empty word}, $\emptyset$, containing no letters. Here, $|\omega | =n$ is called the length of the word $\omega$ and $| \emptyset | := 0$. Employing the notation $\mathfrak{z} ^\omega := \mathfrak{z} _{i_1} \mathfrak{z} _{i_2} \cdots \mathfrak{z} _{i_n}$ and $1 := \mathfrak{z} ^\emptyset$, any $h \in \mathbb{H} ^2 _d$ is a formal power series:
$$ h (\mathfrak{z} ) = \sum _{\omega \in \mathbb{F} ^d} \hat{h} _\omega \mathfrak{z} ^\omega; \quad \quad \hat{h} _\omega \in \mathbb{C}, \quad \quad \sum _{\omega \in \mathbb{F} ^d} | \hat{h} _\omega | ^2 = \sum _{k=0} ^\infty \sum _{|\omega | = k} | \hat{h} _\omega | ^2 < + \infty. $$ As first shown by Popescu, these formal power series converge absolutely in operator-norm when evaluated at any strict contraction $Z = (Z_1, \cdots , Z_d ) : \mathcal{H} \otimes \mathbb{C} ^d \rightarrow \mathcal{H}$, $Z_k \in \mathscr{L} (\mathcal{H} )$ from $d$ copies of a Hilbert space $\mathcal{H}$ into itself, so that elements of $\mathbb{H} ^2 _d$ can be viewed as analytic, non-commutative functions on the open \emph{NC unit row-ball} of all strict \emph{row contractions} on a fixed, separable Hilbert space \cite{Pop-freeholo}.  Moreover, as in the single-variable setting, the unital Banach algebra, $\mathbb{H} ^\infty _d$, of all uniformly bounded NC functions in the NC unit row-ball, can be identified, completely isometrically, with the \emph{left multiplier algebra} of $\mathbb{H} ^2 _d$, the algebra of all NC functions in the unit row-ball which left multiply $\mathbb{H} ^2 _d$ into itself \cite{Pop-freeholo,SSS}. In particular, left multiplication by any of the $d$ non-commuting variables defines a $d-$tuple of isometries, $L_k := M^L _{\mathfrak{z} _k}$, with pairwise orthogonal ranges which we call the \emph{left free shifts}. It follows that the row $d-$tuple $L = (L_1, \cdots , L_d ) : \mathbb{H} ^2 _d \otimes \mathbb{C} ^d \rightarrow \mathbb{H} ^2 _d$ is an isometry from several copies of $\mathbb{H} ^2 _d$ into itself. That is, the \emph{left free shift}, $L$, is a \emph{row isometry} and this plays the role of the shift in this non-commutative and multi-variable Hardy space theory. It follows that we can view Fock space, $\mathbb{H} ^2 _d$, as the \emph{free} or \emph{NC Hardy space} and $\mathbb{H} ^\infty _d$ as the \emph{NC Hardy algebra}. 

Toeplitz operators in this NC setting were first defined and studied by Popescu \cite{Pop-multi,Pop-factor,Pop-entropy}. A bounded operator $T \in \mathscr{L} (\mathbb{H} ^2 _d )$ will be called \emph{left Toeplitz} if it obeys the left NC Brown--Halmos Toeplitz condition:
$$ L_j^* T L_k = \delta _{j,k} T. $$ Popescu also initiated the study of factorization of positive left Toeplitz operators in \cite{Pop-multi,Pop-factor,Pop-entropy}. Namely, in addition to the left free shifts, $L_k := M^L _{\mathfrak{z} _k}$, one can also define the right free shifts, $R_k := M^R _{\mathfrak{z} _k}$, on $\mathbb{H} ^2 _d$. As before, these are isometries with pairwise orthogonal ranges and the weakly-closed, unital operator algebra they generate can be identified with the \emph{right multiplier algebra} of Fock space. Here, a formal power series, $f (\mathfrak{z} )$, is said to be a \emph{right multiplier} if right multiplication by $f (\mathfrak{z} )$ maps $\mathbb{H} ^2 _d$ into itself. A positive left Toeplitz operator, $T>0$, is then said to be \emph{factorable} if there is a right multiplier, $f (\mathfrak{z} )$, so that $T = (M^R _f) ^* M^R _f$. There is an exact version of the inner--outer factorization in this NC setting and any right (or left) multiplier, $f$, factors as $\Theta \cdot h$, where $\Theta$ is an \emph{inner}, \emph{i.e.} isometric right multiplier and $h$ is \emph{outer}, \emph{i.e.} $M^R _h$ has dense range \cite[Theorem 4.2]{Pop-factor} \cite[Corollary 2.2]{DP-inv}. Hence if $T =(M^R _f) ^* M^R _f$ then $T = (M^R _h ) ^* M^R _h$, and we can assume that $f =h$ is outer. Here, observe that right multipliers and left multipliers commute, so that any operator of the form $T = (M^R _f) ^* M^R _f$ is left Toeplitz and positive. By applying his NC Wold decomposition for row isometries and using a similar operator--theoretic proof as in that of a corresponding weakened form of Theorem \ref{Opszego}, Popescu established the following factorization results in Theorem \ref{Pop1} below \cite{Pop-entropy}. In the statement below, the \emph{entropy} of a positive semi-definite left Toeplitz operator, $T$, is defined as the Szeg\"o--type quantity,
$$ \mathrm{e} (T) := \mathrm{log} \, \mathrm{inf} _{p \in \mathbb{C} \{ \mathbb{\mathfrak{z}} \}  _0} \| \sqrt{T} (1 -p ) \| ^2 _{\mathbb{H} ^2} \in \mathbb{R} \cup \{ - \infty \}, $$ where $\mathbb{C} \{ \mathbb{\mathfrak{z}} \}  = \mathbb{C} \{ \mathfrak{z} _1 , \cdots , \mathfrak{z} _d \}$ denotes the ring of free or non-commutative complex polynomials in several NC variables and $\mathbb{C} \{ \mathbb{\mathfrak{z}} \}  _0$ denotes the free polynomials which vanish at $0 := (0, \cdots , 0 ) \in \mathbb{B} ^d _1$. Equivalently, $\mathbb{C} \{ \mathbb{\mathfrak{z}} \}  _0$ is the ring of all complex free polynomials with vanishing constant term, $\hat{p} _\emptyset =0$. 

\begin{thmA}[Popescu] \label{Pop1}
A positive left Toeplitz operator, $T \in \mathscr{L} (\mathbb{H} ^2 _d )$, has a non-zero factorable minorant if and only if it has finite entropy $\mathrm{e} (T) > - \infty$. If a positive left Toeplitz operator is bounded below, $T \geq \epsilon I >0$, $\epsilon >0$, then it is factorable.
\end{thmA}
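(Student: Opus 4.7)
The plan is to mirror the single-variable Szeg\"o--Wold strategy in the non-commutative setting, replacing the classical Wold decomposition with Popescu's NC Wold decomposition for row isometries, and using Popescu's identification of the commutant of the left free shift with the right multiplier algebra in place of the corresponding single-variable fact that the commutant of the shift is $H^\infty$.

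\medskip

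For the easy direction, I would start from a nontrivial factorizable minorant $T_0 := (M^R_h)^* M^R_h \leq T$ with $h$ outer. The inequality $\|\sqrt{T}(1-p)\|^2 \geq \|\sqrt{T_0}(1-p)\|^2 = \|h - p \cdot h\|^2_{\hardy}$ holds for every $p \in \fp_0$, and since $p(0) = 0$ the constant term of $p \cdot h$ vanishes, giving $\langle h - p\cdot h, 1\rangle = h(0)$. Cauchy--Schwarz then yields $\|h - p \cdot h\| \geq |h(0)|$. Outerness forces $h(0) \neq 0$ (otherwise $(M^R_h)^* 1 = \ov{h(0)}\cdot 1 = 0$ would contradict density of $\nbran M^R_h$), so $\mr{e}(T) \geq \log |h(0)|^2 > -\infty$.

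\medskip

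For the converse, I would use the NC Brown--Halmos condition $L_j^* T L_k = \delta_{j,k} T$ to define $V_k \sqrt{T} f := \sqrt{T} L_k f$, which extends to a row isometry $V = (V_1, \ldots, V_d)$ on $\ov{\nbran\sqrt{T}}$. A quick induction yields $V^\omega \sqrt{T} = \sqrt{T} L^\omega$ for every $\omega \in \F^d$, so that
\begin{equation*}
\inf_{p \in \fp_0}\|\sqrt{T}(1-p)\|^2 = \mr{dist}\bigl(\sqrt{T}\cdot 1,\; \ov{\mr{span}\{V^\omega \sqrt{T} 1 : |\omega| \geq 1\}}\bigr)^2.
\end{equation*}
Finite entropy therefore means $\sqrt{T} 1$ has a nontrivial projection onto the orthogonal complement of $V\bigl(\ov{\nbran\sqrt{T}} \otimes \C^d\bigr)$, so by Popescu's NC Wold decomposition the shift part of $V$ is nontrivial: $V = V_\ell \oplus V_c$ with $V_\ell$ unitarily equivalent to $L \otimes I_{\mc{W}}$ on $\hardy \otimes \mc{W}$ for some nonzero wandering space $\mc{W}$. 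Let $W^* : \ov{\nbran\sqrt{T}} \to \hardy \otimes \mc{W}$ be the co-isometry onto the shift summand, so that $W^* V_k = (L_k \otimes I_{\mc{W}}) W^*$. Setting $H := W^* \sqrt{T}$ one obtains $H L_k = (L_k \otimes I_{\mc{W}}) H$, and for any $w \in \mc{W}$ with $H_w := (I \otimes w^*) H \neq 0$ the operator $H_w : \hardy \to \hardy$ commutes with every $L_k$. By Popescu's commutant theorem $H_w = M^R_{h_w}$ for a nonzero right multiplier $h_w$, and
\begin{equation*}
H_w^* H_w = H^* (I \otimes w w^*) H \leq H^* H = \sqrt{T} W W^* \sqrt{T} \leq T
\end{equation*}
is the desired nontrivial factorizable minorant.

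\medskip

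For the strict positivity statement, $T \geq \eps I$ makes $\sqrt{T}$ boundedly invertible on $\hardy$, so that $V_k = \sqrt{T} L_k \sqrt{T}^{-1}$. Purity of $L$ (i.e., $\bigcap_n \nbran L^n = \{0\}$) transfers through $\sqrt{T}$ to purity of $V$, eliminating the Cuntz summand, while $V(\hardy \otimes \C^d) = \sqrt{T}(\hardy \ominus \C)$ has codimension one in $\hardy$ because $\sqrt{T}$ is an isomorphism, forcing $\dim \mc{W} = 1$. Hence $W^*$ is a genuine unitary $\hardy \to \hardy$, $H = W^* \sqrt{T} = M^R_h$ is a scalar right multiplier, and $(M^R_h)^* M^R_h = \sqrt{T} W W^* \sqrt{T} = T$ exactly. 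The hard part, I expect, will be the careful application of the NC Wold decomposition at the correct multiplicity and, in the finite-entropy case, the extraction of a scalar right multiplier from the possibly vector-valued intertwiner $H$; the rest is essentially a direct translation of the single-variable argument.
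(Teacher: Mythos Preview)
Your proposal is correct and follows exactly the approach the paper describes for this result (which the paper attributes to Popescu and does not prove in detail): transfer the left Toeplitz condition to a row isometry $V$ on $\ov{\nbran\sqrt{T}}$, invoke Popescu's NC Wold decomposition, and use the commutant identification $\{L_1,\dots,L_d\}' = \scr{R}^\infty_d$ to recognize the resulting intertwiner as a right multiplier. The paper's (commented-out) single-variable argument for Theorem~\ref{Opszego} is the template you have faithfully transported to the NC setting, including the purity argument for the bounded-below case; your handling of the possibly higher-dimensional wandering space $\mc{W}$ in the finite-entropy case by slicing with $w\in\mc{W}$ is the natural extra step and is fine.
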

If $T \geq T_0 \geq 0$ are positive semi-definite left Toeplitz operators and $T_0 =(M^R _h ) ^* (M ^R _h)$ is a factorable left Toeplitz operator bounded above by $T$, where $h$ is outer, we say that $T_0$ is a \emph{factorable Toeplitz minorant} or, more simply, a \emph{factorable minorant} of $T$. If $T_0$ is maximal, it is called the \emph{maximal factorable minorant} and $h$ is the \emph{maximal outer factor}. A natural question, posed and left unresolved in \cite{Pop-entropy} is whether Theorem~\ref{Opszego} holds in this setting: 
\begin{quest} \label{openQ}
Let $T \geq 0$ be a positive semi-definite left Toeplitz operator on the Fock space. If $T$ has a non-trivial factorable minorant, is $T$ factorable?
\end{quest}
While we do not have an answer to this question at this time, we will provide a detailed analysis and our results, Theorem \ref{main1} and Corollary \ref{answer}, will show that this question is equivalent to several open questions on (column) extreme points of $[\mathbb{H} ^\infty _d ]_1$, free de Branges--Rovnyak spaces, purity of free Gleason solutions and free Smirnov multipliers of $\mathbb{H} ^2 _d$.

One of the central results of the paper is the following theorem, where we explicitly construct the maximal outer factor, $a$ of $I - (M^R _b) ^* M^R _b$, which we call the \emph{free} or \emph{NC Sarason function of $b$}, where $b$ is a non-column--extreme contractive left multiplier of the Fock space and we generalize several results of D. Sarason on the relationship between $b$ and $a$, see Section \ref{sect:CE} and Corollary \ref{Sarmaxouter}.
\begin{thm*}
Given a non-CE $b \in [\mathbb{H} ^\infty _d ]_1$, let $\mathscr{H} ^\mathrm{t} ( b) := \mathscr{M} \, \sqrt{I - b(R) b(R) ^*}$ be the right free de Branges--Rovnyak space of $b$, let $X := L^* | _{\mathscr{H} ^\mathrm{t} (b)}$ and set $\boldsymbol{b} := L^* b^\mathrm{t} \in \mathscr{H} ^\mathrm{t} (b) \otimes \mathbb{C} ^d$. Define a formal power series, 
$ a (\mathfrak{z} ) := \sum _{\omega \in \mathbb{F} ^d} \hat{a} _\omega \mathfrak{z} ^\omega$, by 
$$ a(0) = \hat{a} _\emptyset := 1 - |b(0) |^2 - \| \boldsymbol{b} \| ^2 _{\mathscr{H} ^\mathrm{t} (b)} >0 \quad \quad  \mbox{and} \quad \quad  \hat{a}_\omega := - a(0) \left\langle b^\mathrm{t}, X^\omega b^\mathrm{t}\right\rangle_{\mathscr{H} ^\mathrm{t} (b)}, \ \omega \neq \emptyset. $$ Then $a$ belongs to $[\mathbb{H} ^\infty _d ]_1$, $a$ is outer and $a(R) ^* a(R)$ is the {\bf maximal} factorable left Toeplitz minorant of $I - b(R) ^* b(R)$ so that $c := \left( \begin{smallmatrix} b \\ a \end{smallmatrix} \right)$ is CE and 
$$ a(0) ^2 = \inf_{p \in \mathbb{C} \{ \mathbb{\mathfrak{z}} \}  _0} \left\langle 1-p , (I-b(R)^* b(R)) (1-p) \right\rangle_{\mathbb{H} ^2}. $$
\end{thm*}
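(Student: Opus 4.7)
The plan is to apply Popescu's NC Wold decomposition to a row isometry derived from $T := I - b(R)^* b(R)$, extract the outer factor $a$ from the shift part of that decomposition, and then transport the resulting coefficient expressions into the dBR space $\scr{H}^\mrt(b)$ via a defect-style intertwiner.

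\textbf{Row isometry, Wold, and the Szeg\"o identity.} Since $b(R)$ commutes with each $L_k$, the positive operator $T$ satisfies the left NC Brown--Halmos identity $L_j^* T L_k = \delta_{jk} T$, so $V_k\sqrt{T} x := \sqrt{T} L_k x$ well-defines a row isometry $V = (V_1,\ldots,V_d)$ on $\overline{\ran{\sqrt{T}}}$. Popescu's NC Wold decomposition gives $V = V_0 \oplus U$ on $\overline{\ran{\sqrt{T}}} = \mathcal{S}\oplus\mathcal{U}$, with $V_0$ a row shift and $U$ a Cuntz row unitary. Using $V_k\sqrt{T} = \sqrt{T} L_k$ and $\hardy \ominus \sum_k L_k \hardy = \C \cdot 1$, the wandering subspace satisfies $\mathcal{W} = \overline{\ran{\sqrt{T}}} \ominus \overline{\sqrt{T}\sum_k L_k \hardy}$ and has dimension at most one, with $\dim\mathcal{W} = 1$ iff $\mathrm{dist}(\sqrt{T}1,\, \overline{\sqrt{T}\sum_k L_k \hardy}) > 0$; the non-CE hypothesis on $b$ (via Theorem \ref{Opszego}) guarantees this distance is positive, so $\dim\mathcal{W} = 1$. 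Setting $w := P_\mathcal{W}\sqrt{T}1 / \|P_\mathcal{W}\sqrt{T}1\|$, one computes $a(0) = \ip{\sqrt{T}1}{w} = \|P_\mathcal{W}\sqrt{T}1\| = \inf_{p\in\fp_0}\|\sqrt{T}(1-p)\|$, giving $a(0)^2 = \inf_{p \in \fp_0}\ip{1-p}{T(1-p)}_{\hardy} > 0$.

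\textbf{Extraction of $a$ and maximality.} Define the isometry $W : \hardy \to \overline{\ran{\sqrt{T}}}$ by $W\mf{z}^\om := V^\om w$. Then $V_k W = WL_k$, which forces $W^* V_k = L_k W^*$. Setting $H := W^*\sqrt{T}$, the identity $HL_k = W^*\sqrt{T}L_k = W^*V_k\sqrt{T} = L_k H$ shows $H$ commutes with the left free shifts, so by the NC multiplier commutant theorem $H = a(R)$ for a unique formal power series $a$. The cyclicity $V^\om\sqrt{T}1 = \sqrt{T}\mf{z}^\om$ makes $\sqrt{T}\hardy$ dense in $\overline{\ran{\sqrt{T}}}$, and hence $W^*\sqrt{T}\hardy$ dense in $\hardy$, giving the outerness of $a$. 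The factorization $a(R)^* a(R) = \sqrt{T}P_\mathcal{S}\sqrt{T} \leq T \leq I$ yields both $a \in [\mult]_1$ and $a(R)^* a(R) \leq T$; maximality among factorizable left Toeplitz minorants is a standard Wold-uniqueness argument (any outer $g$ with $g(R)^*g(R) \leq T$ induces an analogous row isometry that must factor through $V$, forcing $g(R)^* g(R) \leq a(R)^* a(R)$).

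\textbf{Coefficients via $\scr{H}^\mrt(b)$, and the main obstacle.} For the explicit coefficient formulas I would use the defect intertwining $b(R)\sqrt{I-b(R)^*b(R)} = \sqrt{I-b(R)b(R)^*}\,b(R)$ to construct a natural partial intertwiner $\Phi : \overline{\ran{\sqrt{T}}} \to \scr{H}^\mrt(b)$ sending $\sqrt{T}1 \mapsto k_0^b := 1 - \overline{b(0)} b^\mrt$ (the reproducing kernel at $0$ in $\scr{H}^\mrt(b)$) and intertwining $\Phi V_k^* = X_k \Phi$. Under $\Phi$, the Taylor coefficient $\hat{a}_\om$ (an inner product of the form $\ip{\sqrt{T}1}{V^{\om^\mrt} w}$ after identifying $a = (W^*\sqrt{T}1)^\mrt$ via the convention $a(R) = M^R_{a^\mrt}$) transports into an inner product involving $k_0^b$ and $X^\om b^\mrt$; for $\om\ne\emptyset$, the equalities $X_k 1 = L_k^*1 = 0$ kill the ``$1$'' part of $k_0^b$, leaving the stated $\hat{a}_\om = -a(0)\ip{b^\mrt}{X^\om b^\mrt}_{\scr{H}^\mrt(b)}$. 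The $\om = \emptyset$ case, combined with $\|k_0^b\|^2_{\scr{H}^\mrt(b)} = 1 - |b(0)|^2$ and $\|\mbf{b}\|^2_{\scr{H}^\mrt(b)} = \sum_k \|L_k^* b^\mrt\|^2_{\scr{H}^\mrt(b)}$, produces the closed form $a(0) = 1 - |b(0)|^2 - \|\mbf{b}\|^2_{\scr{H}^\mrt(b)}$. The main technical obstacle is constructing $\Phi$ correctly -- tracking signs, the transpose convention on words, and the two distinct dBR-type norms on $\overline{\ran{\sqrt{I-b(R)^*b(R)}}}$ and $\scr{H}^\mrt(b) = \overline{\ran{\sqrt{I-b(R)b(R)^*}}}$, which are linked only through $b(R)$. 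One must also establish the implicit containment $b^\mrt \in \scr{H}^\mrt(b)$, which should follow from a direct computation with the defect operators under the non-CE hypothesis; once $\Phi$ and this containment are in hand, all assertions in the theorem follow.
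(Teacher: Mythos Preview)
Your approach is genuinely different from the paper's and worth comparing. The paper runs in the opposite order: it \emph{defines} $a$ by the stated transfer-function formula, obtained by appending the row $\bigl(-a(0)\ip{b^\mrt}{\cdot}_b,\ a(0)\bigr)$ to the de Branges--Rovnyak colligation $U_b$ on $\scr{H}^\mrt(b)$, and verifies (quoting \cite{JM-freeCE}) that the enlarged colligation $U_c$ is both isometric and co-isometric. This gives $c\in[\mult\otimes\C^2]_1$ at once and, by Proposition~\ref{rowuniCE}, that $c$ is CE. The paper then compares this $a$ with Popescu's maximal outer factor $A$ of $T=I-b(R)^*b(R)$ (existence and the Szeg\"o formula for $|A(0)|^2$ are quoted from \cite{Pop-entropy}): one shows $\bsm b\\A\esm$ is CE, writes $\|\mbf{C}\|_C^2=1-|b(0)|^2-A(0)^2$, and uses a CPNC-kernel inequality on the $(1,1)$ block to obtain $\|\mbf{b}\|_b^2\leq\|\mbf{C}\|_C^2$, hence $a(0)\geq A(0)$. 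An inner--outer factorization argument then forces $a$ to be outer, and maximality of $A$ yields $a=A$. So in the paper the coefficient formula is the \emph{definition}; the work is the comparison with $A$.

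Your Wold-extraction step is fine and reproduces Popescu's construction of $A$, including outerness, maximality, and the Szeg\"o identity (minor point: the relevant reference is Popescu's Theorem~\ref{Pop1}, not the classical Theorem~\ref{Opszego}). The genuine gap is the intertwiner $\Phi$. The Wold space $\overline{\ran{\sqrt{T}}}$ is canonically $\scr{H}^\mrt(b^*)=\scr{M}\bigl(\sqrt{I-b(R)^*b(R)}\bigr)$, \emph{not} $\scr{H}^\mrt(b)=\scr{M}\bigl(\sqrt{I-b(R)b(R)^*}\bigr)$; the defect intertwining you invoke produces only the contraction induced by $b(R)$ between them, not a map conjugating $V^*$ to $X$. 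Concretely, any isometric $\Phi$ with $\Phi(\sqrt{T}1)=k_0^b$ would force $\|\sqrt{T}1\|_{\hardy}^2=\|k_0^b\|_b^2$, i.e.\ $1-\|b^\mrt\|_{\hardy}^2=1-|b(0)|^2$, which fails unless $b$ is constant; so $\Phi$ cannot be isometric, and without isometry your inner-product transport does not go through as stated. To rescue this direction you would have to show that the Wold-constructed $a$ admits a co-isometric realization on $\scr{H}^\mrt(b)$ with exactly the pair $(C_a,D_a)=\bigl(-a(0)\ip{b^\mrt}{\cdot}_b,\,a(0)\bigr)$, but establishing that is essentially the paper's argument run in reverse and does not obviously simplify via a single intertwiner.
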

In the above statement, $b^\mathrm{t} \in \mathbb{H} ^2 _d$ is the NC function obtained by reversing the order of products in all NC monomials in the NC Taylor series of $b$ and $b(R) = M^R _{b ^\mathrm{t}}$. The above formulas for the Taylor series coefficients of $a$ recover Sarason's formulas when $d=1$ \cite{Sarason-afun}. As described above, Theorem \ref{main1} then connects factorizability of positive semi-definite left Toeplitz operators, graphs of free Smirnov multipliers and doubly--free shift invariant de Branges--Rovnyak spaces to the condition that $c := \left( \begin{smallmatrix} b \\ a \end{smallmatrix} \right)$ is inner:
\begin{thm*}[Theorem \ref{main1}]
Given a non-column--extreme left multiplier, $b \in [ \mathbb{H} ^\infty _d ] _1$, with outer Sarason function, $a \in [ \mathbb{H} ^\infty _d ] _1$, let $\left( \begin{smallmatrix} B \\ A \end{smallmatrix} \right) D := CD$ be the inner--outer factorization of $c:= \left( \begin{smallmatrix} b \\ a \end{smallmatrix} \right)$. The following are equivalent: 
\begin{enumerate}
\item[\emph{(i)}] $c = \left( \begin{smallmatrix} b \\ a \end{smallmatrix} \right)$ is inner so that $D \equiv 1$. 
\item[\emph{(ii)}] If $X := L^* | _{\mathscr{H} ^\mathrm{t} (b)}$, then $X^*$ is \emph{pure}, \emph{i.e.} $ \lim \sum _{|\omega | =n} \| X ^\omega h \| _{\mathscr{H} ^\mathrm{t} (b)} =0 $ for any $h \in \mathscr{H} ^\mathrm{t} (b)$. 
\item[\emph{(iii)}] $X^*$ obeys the \emph{weak purity condition}, $ \sum _{|\omega | =n } \| X^\omega b ^\mathrm{t} \|_{\mathscr{H} ^\mathrm{t} (b)} ^2 \rightarrow 0$.
\item[\emph{(iv)}] The free polynomials are dense in $\mathscr{H} ^\mathrm{t} (b)$.
\item[\emph{(v)}] The right free de Branges--Rovnyak space, $\mathscr{H} ^\mathrm{t} (b)$, is the domain of the adjoint of the closed, densely--defined right Smirnov multiplier, $H(R) = b(R) a(R) ^{-1}$. 
\end{enumerate}
\end{thm*}

As a consequence of this theorem, Corollary \ref{answer} shows that Question \ref{openQ} has a positive answer if and only if the above equivalent conditions hold for every non-column--extreme $b \in [\mathbb{H} ^\infty _d ]_1$:

\begin{cor*}[Corollary \ref{answer}]
The following statements are equivalent:
\begin{itemize}
    \item[\emph{(i)}] The equivalent conditions of Theorem \ref{main1} hold for every non-column--extreme $b \in [\mathbb{H} ^\infty _d ] _1$.
    
    \item[\emph{(ii)}] A positive semi-definite left Toeplitz operator, $T \in \mathscr{L} (\mathbb{H} ^2 _d )$, is factorable if and only if it has a factorable minorant.

\end{itemize}
\end{cor*}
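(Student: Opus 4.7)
The plan is to mirror the single-variable chain of reasoning sketched in the introduction, reducing the factorization question for an arbitrary positive semi-definite left Toeplitz operator $T$ to the factorization question for a defect operator of the form $I - b(R)^* b(R)$ obtained via an NC Aleksandrov--Clark--type correspondence. Once in that form, Theorem \ref{main1} (specifically condition (i)) identifies factorizability of the defect with the statement that the column $c = \bsm b \\ a \esm$ built from the NC Sarason outer function $a$ is inner.

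For the direction (ii) $\Rightarrow$ (i), fix any non-column-extreme $b \in [\mult]_1$ and apply the hypothesis to the positive left Toeplitz operator $T := I - b(R)^* b(R)$. By the theorem stated immediately before the corollary, the NC Sarason function $a \in [\mult]_1$ is outer, $a(R)^* a(R)$ is the maximal factorizable left Toeplitz minorant of $T$, and $a \not\equiv 0$ because $b$ is non-CE. Hence $T$ has a non-trivial factorizable minorant, so by (ii) it is itself factorizable; maximality of $a$ then forces $T = a(R)^* a(R)$, which is exactly the statement that $c$ is inner, i.e.\ condition (i) of Theorem \ref{main1}.

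For the harder converse (i) $\Rightarrow$ (ii), let $T \geq 0$ be a positive left Toeplitz operator with a non-trivial factorizable minorant. The plan is to use the NC Clark correspondence to produce a contractive left multiplier $b \in [\mult]_1$ so that
$$ T = (I - b(R)^*)^{-1}\bigl(I - b(R)^* b(R)\bigr)(I - b(R))^{-1}, $$
in analogy with the classical identity recalled in the introduction. A factorizable minorant for $T$ descends to one for the defect $I - b(R)^* b(R)$, so $b$ is non-CE, and the hypothesis applied to this $b$ gives $I - b(R)^* b(R) = a(R)^* a(R)$ for the Sarason function $a$. Substituting back yields $T = F(R)^* F(R)$, where $F(R)$ is the closure of the right Smirnov multiplier $a \cdot (1 - b)^{-1}$; the NC inner--outer factorization of Popescu and of Davidson--Pitts then identifies $F$ with a bounded outer right multiplier, factorizing $T$.

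The main technical obstacle is the Clark step of the converse: rigorously constructing the contractive $b$ from $T$ and verifying the algebraic identity for $T$ in terms of $b(R)$, including on the dense domain where $(I - b(R))^{-1}$ makes sense. A closely related subtlety is checking that the right Smirnov multiplier $a/(1-b)$ in fact extends to a bounded right multiplier when $c = \bsm b \\ a \esm$ is inner (as opposed to being merely a closable unbounded operator); this is precisely where the content of Theorem \ref{main1}, in particular the equivalence of (i) with item (v) on Smirnov multipliers, enters decisively.
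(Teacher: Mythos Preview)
Your outline is essentially the paper's own argument: both directions go through the NC Clark correspondence, reducing factorizability of a bounded positive left Toeplitz $T$ to factorizability of the defect $I-b_T(R)^*b_T(R)$ of an associated contractive multiplier $b_T$, and then invoking Theorem \ref{main1}(i). Your (ii)$\Rightarrow$(i) direction is complete and matches the paper.

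On the two ``obstacles'' you flag for (i)$\Rightarrow$(ii): the first is the genuine one and is exactly what the paper resolves. The key point you are missing is that because $T$ is bounded, the NC measure $\mu_T(L^\omega):=\ip{1}{TL^\omega 1}$ satisfies $\mu_T\leq \|T\|\,m$, hence is absolutely continuous in the sense of \cite{JM-ncld}. The NC Fatou theorem of \cite{JM-ncFatou} then produces $b_T\in[\mult]_1$ with $\mu_T=\mu_{b_T}$ and shows that the bounded Toeplitz operators $T_r=(I-b_T(rR)^*)^{-1}(I-b_T(rR)^*b_T(rR))(I-b_T(rR))^{-1}$ converge SOT to $T$, which gives the displayed identity for $T$ in terms of $b_T(R)$. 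This is the step your proposal leaves open.

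Your second obstacle, by contrast, is overstated. Once the formula for $T$ holds and (i) gives $I-b_T(R)^*b_T(R)=a(R)^*a(R)$, the operator $F(R):=a(R)(I-b_T(R))^{-1}$ is a closed, densely defined right Smirnov multiplier with $F(R)^*F(R)=T$ bounded; but a closed densely defined operator whose modulus squared is bounded is itself bounded, so $F\in\mult$ automatically. No appeal to item (v) of Theorem \ref{main1} is needed here, and in any case (v) concerns the different Smirnov multiplier $b(R)a(R)^{-1}$, not $a(R)(I-b(R))^{-1}$, so that reference is misplaced.
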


While the above theorem, Theorem \ref{main1}, is admittedly formidable and technical in appearance, it turns out that in many cases of interest its conditions are in fact checkable. In particular, it turns out to give very satisfactory results in the case of NC rational symbols $\mathfrak{b}$. In Section \ref{sect:NCratmult} we apply our results to the special case of contractive, non-commutative rational multipliers of the Fock space to establish the following dichotomy:
\begin{thm*}[Theorem \ref{CEisinner}]
If $\mathfrak{b}$ is a contractive, non-commutative rational multiplier of the Fock space, $\mathfrak{b}$ is either \emph{inner} or $\mathfrak{b}$ is not column--extreme. If $\mathfrak{b}$ is not column--extreme, then its NC outer Sarason function, $\mathfrak{a} \in [ \mathbb{H} ^\infty _d ] _1$, is also NC rational and the column $\mathfrak{c} := \left( \begin{smallmatrix} \mathfrak{b} \\ \mathfrak{a} \end{smallmatrix} \right)$ is inner.
\end{thm*}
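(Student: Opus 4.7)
\emph{Proof proposal.} The strategy is to use a finite-dimensional descriptor realization of a contractive NC rational multiplier $\fb$ to force finite-dimensionality of the right free de Branges--Rovnyak space $\scr{H}^\mrt(\fb)$, to invoke Theorem~\ref{main1} on the resulting structure, and then to read off rationality of the Sarason function $\fa$ from the explicit formula of Section~\ref{sect:CE}.

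I would begin with a minimal, contractive Fornasini--Marchesini realization
$$ \fb(\mf{z}) \;=\; D + C\Bigl(I_n - \textstyle\sum_k A_k\mf{z}_k\Bigr)^{-1}\sum_k B_k\mf{z}_k $$
on a finite-dimensional coefficient space $\C^n$. A standard identification --- the NC analogue of the commutative model of $\scr{H}(b)$ as a quotient of the state space --- embeds $\scr{H}^\mrt(\fb)$ unitarily onto a finite-dimensional $L^*$-invariant subspace of $\hardy$ on which $X = L^*|_{\scr{H}^\mrt(\fb)}$ is conjugate (up to an adjoint) to the transition row $(A_1,\ldots,A_d)$. Minimality of the realization combined with contractivity of the colligation should then force the completely positive map $\Phi(T) = \sum_k X_k^* T X_k$ to have spectral radius strictly less than $1$, so that $\sum_{|\om|=n}\|X^\om h\|^2 \to 0$ for every $h\in\scr{H}^\mrt(\fb)$.

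The dichotomy then follows immediately. If $\scr{H}^\mrt(\fb) = \{0\}$ then $\fb(R)$ is a co-isometry, and combined with the free Beurling-type inner--outer factorization this forces $\fb$ to be inner. Otherwise $\scr{H}^\mrt(\fb)$ is nontrivial and finite-dimensional, in which case the weak purity condition (iii) of Theorem~\ref{main1} holds by the previous step, and condition (iv) (density of free polynomials) likewise follows from the realization-theoretic identification above. All equivalent conditions of Theorem~\ref{main1} therefore apply to $\fb$: it is not column--extreme, the inner--outer factorization of $\fc = \bsm\fb \\ \fa\esm$ has trivial outer factor $D\equiv 1$, and $\fc$ is inner.

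Finally, summing the Sarason formula $\hat{\fa}_\om = -\fa(0)\ip{\fb^\mrt}{X^\om \fb^\mrt}_{\scr{H}^\mrt(\fb)}$ against formal NC monomials yields
$$ \fa(\mf{z}) = \fa(0)\Bigl(1 + \|\fb^\mrt\|^2_{\scr{H}^\mrt(\fb)} - \ip{\fb^\mrt}{\bigl(I - \textstyle\sum_k X_k\mf{z}_k\bigr)^{-1} \fb^\mrt}_{\scr{H}^\mrt(\fb)}\Bigr), $$
a linear-pencil resolvent expression over the finite-dimensional space $\scr{H}^\mrt(\fb)$; this is patently NC rational. The principal obstacle I expect is the first step: cleanly matching a minimal contractive realization of $\fb$ with a de Branges--Rovnyak model realizing the $\scr{H}^\mrt(\fb)$-norm, with $X$ appearing (up to unitary conjugation) as the transition row and $\Phi$ strictly contractive, while carefully tracking the left/right multiplier asymmetry and the product reversal $\fb \mapsto \fb^\mrt$. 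Once these identifications are in place, the rest of the argument is a clean application of Theorem~\ref{main1} together with the Sarason formula.
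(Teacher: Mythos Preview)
There is a genuine gap: your central claim, that a finite--dimensional realization of $\fb$ forces $\scr{H}^\mrt(\fb)$ itself to be finite--dimensional, is false. Already in one variable, if $b=\tfrac12$ then $\scr{H}(b)$ is all of $H^2$ with a rescaled norm. What \emph{is} finite--dimensional for NC rational $\fb$ is the $L^*$--invariant subspace $\scr{M}_0(\fb):=\bigvee_{\om\neq\emptyset} L^{*\om}\fbt\subseteq\scr{H}^\mrt(\fb)$, and this is exactly where the paper works (Lemma~\ref{minFM}): the minimal FM realization of $\fb$ is the compression of the de Branges--Rovnyak colligation to $\scr{M}_0(\fb)$, not a unitary model for all of $\scr{H}^\mrt(\fb)$. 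The argument then succeeds because the weak purity condition $\|X^{(n)}\mbf{b}\|_b\to 0$ of Theorem~\ref{pureisinner} and Corollary~\ref{CEcolumn} only involves the vectors $L^{*\om}\fbt$, which lie in $\scr{M}_0(\fb)$, so purity on this finite--dimensional piece is enough. Your resolvent formula for $\fa$ survives for the same reason: for $\om\neq\emptyset$ one has $X^\om\fbt\in\scr{M}_0(\fb)$, so $\ip{\fbt}{X^\om\fbt}_\fb=\ip{P_0\fbt}{A^\om P_0\fbt}_\fb$ and rationality follows (this is the content of Proposition~\ref{ratfactor}).

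A second gap is the claim that ``minimality plus contractivity of the colligation'' forces $\Phi(T)=\sum_k X_k^* T X_k$ to have spectral radius strictly less than $1$. Contractivity of a colligation only yields that the transition row is a row contraction, not a strict one; there is no reason the spectral radius cannot equal $1$. The paper instead imports a non--trivial input: by \cite[Theorem~A]{JMS-ncrat}, the $A'$ in any minimal \emph{descriptor} realization of an NC rational element of $\hardy$ is jointly similar to a strict row contraction, hence pure. This purity passes to the FM realization $A^{(0)}$ of Lemma~\ref{FMdescriptor} (restriction to an invariant subspace), and then to the de Branges--Rovnyak $A$ by uniqueness of minimal FM realizations up to joint similarity. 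One still needs purity of $A^*$ (as a row), and that step uses a further cited lemma \cite[Lemma~1]{JMS-ncratClark}. Finally, the dichotomy in the CE case comes from Theorem~\ref{pureisinner}, not Theorem~\ref{main1}: the latter already assumes $\fb$ is non--CE, and your ``$\scr{H}^\mrt(\fb)=\{0\}$'' alternative is not the correct CE branch (it would require $\fb(R)$ to be co--isometric, which is not what CE means).
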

This fact certainly holds in one variable as a consequence of the Fej\'er--Riesz theorem and we further show that a Fej\'er--Riesz theorem holds for NC rational multipliers of the Fock space, see Theorem \ref{ncFR}.
\begin{thm*}[NC rational Fej\'er--Riesz]
If $\mathfrak{r}$ is an NC rational left multiplier of Fock space and $T := \mathrm{Re} \, \mathfrak{r} (R) \geq 0$ is a positive semi-definite left Toeplitz operator, then $T$ is factorable, $T = \mathfrak{h} (R) ^* \mathfrak{h} (R)$, where $\mathfrak{h}$ is an outer NC rational multiplier.
\end{thm*}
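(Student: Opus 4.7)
The plan is to reduce the claim to the rational dichotomy of Theorem~\ref{CEisinner} via an operator Cayley transform performed at the symbol level. Since $\fr(R)$ and $\fr(R)^*$ are right multipliers, both satisfy the left Brown--Halmos condition, so $T$ is automatically left Toeplitz. The operator $I+\fr(R)$ satisfies $\nbre(I+\fr(R)) = I+T\geq I$, so it is accretive and bounded below, hence boundedly invertible. The right multiplier algebra is the commutant of $\{L_1,\ldots,L_d\}$, hence is inverse-closed, so $(I+\fr(R))^{-1}$ is again a right multiplier. Pairing $T\,1$ with the vacuum vector $1\in\hardy$, together with $\fr(R)1=\fr^\mrt$, yields $\nbre \fr(0)\geq 0$, so $1+\fr(0)\neq 0$ and the NC rational expression $\fb:=(1-\fr)(1+\fr)^{-1}$ has $0$ in its domain with formal power series matching the symbol of the operator $(1-\fr(R))(1+\fr(R))^{-1}$.

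A direct computation then yields the Cayley identity
\[
I-\fb(R)^*\fb(R) \;=\; 4\,(I+\fr(R)^*)^{-1}\,T\,(I+\fr(R))^{-1} \;\geq\; 0,
\]
so $\fb\in[\mult]_1$. I now apply Theorem~\ref{CEisinner}: either $\fb$ is inner, in which case $T=0$ and $\fh\equiv 0$ finishes the proof, or $\fb$ is non-column--extreme with an outer NC rational Sarason function $\fa\in[\mult]_1$ for which $\bsm\fb\\\fa\esm$ is inner, giving $I-\fb(R)^*\fb(R)=\fa(R)^*\fa(R)$. Rearranging the Cayley identity now produces
\[
T \;=\; \tfrac14\,(I+\fr(R)^*)\,\fa(R)^*\fa(R)\,(I+\fr(R)) \;=\; \fh(R)^*\fh(R), \qquad \fh:=\tfrac12\,\fa\,(1+\fr).
\]

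It remains to check the advertised properties of $\fh$. It is NC rational because the NC rational left multipliers form a ring (and $(1+\fr)^{-1}$ is rational by the previous paragraph), and it is outer because $\fh(R)=\tfrac12\,\fa(R)(I+\fr(R))$ is the composition of the outer operator $\fa(R)$ (which has dense range) with the surjective operator $I+\fr(R)$, hence has dense range itself. The main subtlety in this plan is the symbol-level identification of the NC rational inverse $(1+\fr)^{-1}$ with the operator inverse $(I+\fr(R))^{-1}$, which rests on the inverse-closedness of the right multiplier algebra and on the fact that $\nbre \fr(0)\geq 0$ rules out a vanishing constant term. Once that identification is in hand, Theorem~\ref{CEisinner} carries out the rest of the work.
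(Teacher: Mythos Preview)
Your argument is correct and follows essentially the same route as the paper: Cayley-transform $\fr$ to a contractive NC rational $\fb$, invoke Theorem~\ref{CEisinner} to factor $I-\fb(R)^*\fb(R)=\fa(R)^*\fa(R)$, and unwind to $\fh=\tfrac12\fa(1+\fr)$, which is exactly the paper's $\fD=\fa(1-\fb)^{-1}$. The only notable difference is that you obtain the identity $I-\fb(R)^*\fb(R)=4(I+\fr(R)^*)^{-1}T(I+\fr(R))^{-1}$ by a direct algebraic computation, whereas the paper reaches the equivalent formula $T=(I-\fb(R)^*)^{-1}(I-\fb(R)^*\fb(R))(I-\fb(R))^{-1}$ by appealing to the NC Fatou theorem; your shortcut is more elementary and self-contained. (One minor wording slip: $\fr(R)^*$ is not a right multiplier, but it does satisfy the left Brown--Halmos condition since $\fr(R)$ commutes with each $L_k$, so your conclusion that $T$ is left Toeplitz stands.)
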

In fact, we obtain a more detailed conclusion, which gives some ``degree'' control over $\mathfrak{h} $ (where ``degree'' is defined in a suitable sense, namely as the size of a so-called {\em minimal realization}), and we also obtain some control over the domain of regularity of $\mathfrak{h}$. We prove the theorem by converting the problem to an instance of computing a Sarason function $a$, and read off consequences from the de Branges--Rovnyak realization. As a consequence, we obtain an exact analogue of a classical corollary to Fatou's theorem for contractive NC rational multipliers in Corollary \ref{ncratFatcor}: A contractive NC rational left multiplier $\mathfrak{b} \in [ \mathbb{H} ^\infty _d ] _1$ is inner if and only if its NC Clark measure is singular with respect to NC Lebesgue measure, $m$. Here, NC measures are defined as positive linear functionals on the \emph{free disk system}, $\mathscr{A} _d := (\mathbb{A} _d + \mathbb{A} _d ^* ) ^{-\| \cdot \|}$ where $\mathbb{A} _d := \mathrm{Alg} \{ I , L_1 , \cdots , L_d \} ^{-\| \cdot \|}$ is the \emph{free disk algebra}. When $d=1$, NC measures can be identified with positive linear functionals on the $C^*-$algebra of continuous functions on the unit circle and hence with positive, finite and regular Borel measures by the Riesz--Markov theorem.  Theorem \ref{ncratRN} then provides an explicit formula for the non-commutative Radon--Nikodym derivative of the absolutely continuous part of $\mu _{\mathfrak{b}}$ with respect to a canonical NC Lebesgue measure.

\section{Background and Notation} \label{sect:back}

\subsection{Multipliers of Fock space}

Left multipications by the $d$ independent NC variables, $\mathfrak{z} = ( \mathfrak{z} _1 , \cdots , \mathfrak{z} _d )$, define isometries on the Fock space with pairwise orthogonal ranges:
$$ L_k := M^L _{\mathfrak{z} _k}, \quad \quad L_j ^* L_k = \delta _{j,k} I. $$ It follows that the row $d-$tuple $L := (L_1 , \cdots , L_d ) : \mathbb{H} ^2 _d \otimes \mathbb{C} ^d \rightarrow \mathbb{H} ^2 _d$ is an isometry from several copies of $\mathbb{H} ^2 _d$ into itself. Such an isometry is called a \emph{row isometry}, we call $L$ the \emph{left free shift} and its components left free shifts. Similarly, one can define the right free shifts $R_k := M^R _{\mathfrak{z} _k}$ and the row isometric right free shift, $R$. The letter reversal map $\mathrm{t} : \mathbb{F} ^d \rightarrow \mathbb{F} ^d$, which reverses the order of letters in any word $\omega  \in \mathbb{F} ^d$ defines an involution on the free monoid,
$$  \omega = i_1 \cdots i_n \mapsto \omega ^\mathrm{t} := i_n \cdots i_1. $$ The free monomials $ \{ e_\omega := \mathfrak{z} ^\omega | \ \omega \in \mathbb{F} ^d \}$ define a standard orthonormal basis of $\mathbb{H} ^2 _d \simeq \ell ^2 (\mathbb{F} ^d )$ and the letter reversal map gives rise to a unitary involution of the Fock space, $U_\mathrm{t}$, defined by $U_\mathrm{t} \mathfrak{z} ^\omega = \mathfrak{z} ^{\omega ^\mathrm{t}}$. Here, $e_\emptyset = \mathfrak{z} ^\emptyset =: 1$ is called the vacuum vector of the Fock space. It is straightforward to verify that $U_\mathrm{t} L_k U_\mathrm{t} = R_k$, so that the left shifts are isomorphic to the right shifts.

The NC Hardy algebra, $\mathbb{H} ^\infty _d$, of uniformly bounded NC functions can be identified, completely isometrically, with the unital Banach algebra of left multipliers of the NC Hardy space, $\mathbb{H} ^2 _d$. That is, given any NC function, $F \in \mathbb{H} ^\infty _d$ and $h \in \mathbb{H} ^2 _d$, the left multiplication operator $M^L _F : \mathbb{H} ^2 _d \rightarrow \mathbb{H} ^2 _d$, defined by 
$$ h(Z) \mapsto F(Z) \cdot h(Z), $$ is bounded and $\| M ^L _F \|  = \| F \| _\infty$, where $\| \cdot \| _\infty$ denotes the supremum norm over $\mathbb{B} ^d _\mathbb{N}$ \cite[Theorem 3.1]{SSS}, \cite[Theorem 3.1]{Pop-freeholo}. For any free polynomial, $p \in \mathbb{C} \{ \mathfrak{z} \}$, one can check that $p(L) = M^L _p$ and so we employ the notation $F(L) := M^L _F$. Similarly, if $p \in \mathbb{C} \{ \mathbb{\mathfrak{z}} \} $, then $M^R _{p ^\mathrm{t}} = p  (R) = U_\mathrm{t} p(L) U_\mathrm{t}$, where if $h$ is a formal power series $h(\mathfrak{z} ) = \sum \hat{h} _\omega \mathfrak{z} ^\omega$,
$$ h^\mathrm{t} (\mathfrak{z} ) := \sum \hat{h} _\omega \mathfrak{z} ^{\omega ^\mathrm{t}} = \sum \hat{h} _{\omega ^\mathrm{t}} \mathfrak{z} ^\omega. $$ In particular, if $h \in \mathbb{H} ^2 _d$, $h^\mathrm{t} = U_\mathrm{t} h$. The left and right multiplier algebras of $\mathbb{H} ^2 _d$ are unitarily equivalent via the unitary letter reversal involution $U_\mathrm{t}$ and can be identified with the left and right analytic Toeplitz algebras, $\mathscr{L} ^\infty _d := \mathrm{Alg} \{ I , L_1 , \cdots , L_d \} ^{-WOT}$ and $\mathscr{R} ^\infty _d =\{ I , R_1 , \cdots , R_d \} ^{-WOT} = U_\mathrm{t} \mathscr{L} ^\infty _d U_\mathrm{t}$. Here, $WOT$ denotes the weak operator topology. Since $p (R) = M^R _{p ^\mathrm{t}}$ for any $p \in \mathbb{C} \{ \mathbb{\mathfrak{z}} \} $, we will write $G (R) = M^R _{G ^\mathrm{t}}$ for any $G \in \mathbb{H} ^\infty _d$. Namely $G$ belongs to the left multiplier algebra, $\mathbb{H} ^\infty _d$, if and only if $G^\mathrm{t}$ belongs to the right multiplier algebra, $\mathbb{H} ^{\infty; \mathrm{t}} _d := \mathrm{t} \circ \mathbb{H} ^\infty _d$. (If $F = F(R) = M^R _{F^\mathrm{t}}$ for $F \in \mathbb{H} ^\infty _d$, one can show that $\| F (R) \|$ is equal to the supremum norm of $F^\mathrm{t} (Z)$ over an \emph{NC unit column--ball}.) We will use the following terminology: A left or right multiplier is \emph{inner} if it is isometric and \emph{outer} if it has dense range.

\subsection{Non-commutative reproducing kernel Hilbert spaces}

As in classical Hardy space theory, the Fock space is a (non-commutative) reproducing kernel Hilbert space, in the sense that for any $Z \in \mathbb{B} ^d _n$ and vectors $y,v \in \mathbb{C} ^n$, the \emph{matrix--entry point evaluation}, $\ell _{Z,y,v} : \mathbb{H} ^2 _d \rightarrow \mathbb{C}$,
$$ h \mapsto y^* h(Z) v, $$ is a bounded linear functional. Equivalently, $h \mapsto h(Z)$ is bounded as a linear map from $\mathbb{H} ^2 _d$ into the Hilbert space $\mathbb{C} ^{n \times n}$ equipped with the Hilbert--Schmidt inner product. By the Riesz lemma, $\ell _{Z,y,v}$ is implemented by inner products against vectors $K \{ Z , y , v \} \in \mathbb{H} ^2 _d$ which we call \emph{NC Szeg\"o kernel vectors}. 

More generally, a Hilbert space, $\mathcal{H}$, of free non-commutative functions in $\mathbb{B} ^d _\mathbb{N}$ is a \emph{non-commutative reproducing kernel Hilbert space} (NC-RKHS), if the linear point evaluation functional, $\ell _{Z,y,v}$, is bounded on $\mathcal{H}$ for any $Z \in \mathbb{B} ^d _n$, $y,v \in \mathbb{C} ^n$ and $n \in \mathbb{N}$ \cite{BMV}. (NC-RKHS can, of course, be defined on general NC sets. However, all NC-RKHS in this paper are Hilbert spaces of NC functions in $\mathbb{B} ^d _\mathbb{N}$ so we omit the general definition.) As before, the Riesz lemma implies that these functionals are implemented by taking inner products against \emph{point evaluation} or \emph{NC kernel vectors} $k \{ Z, y ,v \} \in \mathcal{H}$. Given any such NC-RKHS, $Z \in \mathbb{B} ^d _n$ and $W \in \mathbb{B} ^d _m$, one can define a completely bounded linear map on $n \times m$ complex matrices by: $k(Z,W) [ \cdot ] : \mathbb{C} ^{n\times m} \rightarrow \mathbb{C} ^{n\times m}$,
$$  y^* k(Z,W) [vu^*] x := \left\langle k\{ Z,y,v \}, k \{ W,x ,u \} \right\rangle_{\mathcal{H}}, $$ and this map is completely positive if $Z=W$ \cite{BMV}. Following \cite{BMV}, we call $k(Z,W)[\cdot ]$ the \emph{completely positive non-commutative (CPNC) reproducing kernel} of $\mathcal{H}$ and we write $\mathcal{H} = \mathcal{H} _{nc} (k)$. One can check that adjoints of left and right multipliers of an NC-RKHS have a familiar action on NC kernel vectors:
$$ (M^L _F ) ^* k\{ Z , y ,v \} = k\{ Z , F(Z) ^* y , v \} \quad \mbox{and} \quad (M^R _G ) ^* k \{ Z , y ,v \} = k \{ Z , y , G (Z) v \}. $$ 

All NC-RKHS in this paper will be Hilbert spaces of free NC functions in the unit row-ball $\mathbb{B} ^d _\mathbb{N}$, 
$$ \mathbb{B} ^d _\mathbb{N} = \bigsqcup \mathbb{B} ^d _n, \quad \quad \mathbb{B} ^d _n := \left\{ \left. Z \in \mathbb{C} ^{n\times n} \otimes \mathbb{C}  ^{1\times d} \right| \, ZZ^* = Z_1 Z_1 ^* + \cdots + Z_d Z_d ^* < I_n \right\}. $$ In the case of the Fock space, $\mathbb{H} ^2 _d = \mathcal{H} _{nc} (K)$, where $K$ is the \emph{NC Szeg\"o kernel}: Given $Z \in \mathbb{B} ^d _n, W \in \mathbb{B} ^d _m$ and $P \in \mathbb{C} ^{n \times m}$,
$$ K(Z,W) := \left( \mathrm{id} _{n,m} [ \cdot ] - \mathrm{Ad} _{Z, W^*} [\cdot ] \right)  ^{-1} \circ P = \sum _{j=0} ^\infty \mathrm{Ad} _{Z,W^*} ^{(j)} [P] = \sum _{\omega \in \mathbb{F} ^d} Z^\omega P W^{*\omega}, $$ 
 $\mathrm{Ad} _{Z,W^*} [P] := Z_1 P W_1 ^* + \cdots + Z_d P W_d ^*$.

\section{Column--extreme multipliers and the Sarason outer function}
\label{sect:CE}

Recall that an element $b  \in [\mathbb{H} ^\infty _d ] _1$ is said to be \emph{column--extreme (CE)} if there is no non-zero $a \in \mathbb{H} ^\infty _d$ so that the two-component column:
$$ \begin{pNiceMatrix} b(L) \\ a(L) \end{pNiceMatrix} : \mathbb{H} ^2 _d \rightarrow \mathbb{H} ^2 _d \otimes \mathbb{C} ^2, $$ is a contractive left multiplier from one to two copies of Fock space \cite{JM-freeCE}. Any CE multiplier of Fock space is necessarily an extreme point of the closed convex set of contractive left multipliers \cite[Corollary 6.8]{JM-freeCE}. Classically, a multiplier of $H^2$ is an extreme point if and only if it is CE and this also holds in the commutative multi-variable setting of Drury--Arveson space as a consequence of the column--row property of M.P. Hartz \cite[Theorem 1.2, Theorem 1.6]{Hartz}. However, the column--row property does not hold in the NC setting of Fock space and so whether or not every extreme point of $[\mathbb{H} ^\infty _d ] _1$ is CE remains an open problem \cite{AJP-nofreeCR}.

This concept of column--extreme extends readily to the case of operator--valued left multipliers between vector--valued Fock spaces \cite{JM-freeCE}. Let $\mathcal{H}, \mathcal{J}$ be separable Hilbert spaces. We define $\mathbb{H} ^\infty _d \otimes \mathscr{L} (\mathcal{H} , \mathcal{J} )$ as the closure of this algebraic tensor product in the weak operator topology (WOT) of $\mathscr{L} \left( \mathbb{H} ^2 _d \otimes \mathcal{H} , \mathbb{H} ^2 _d \otimes \mathcal{J} \right)$. Elements of $\mathbb{H} ^\infty _d \otimes \mathscr{L} (\mathcal{H} , \mathcal{J} )$ can be viewed as operator--valued multipliers from $\mathbb{H} ^2 _d \otimes \mathcal{H}$ into $\mathbb{H} ^2 _d \otimes \mathcal{J}$. A contractive left multiplier $b \in [ \mathbb{H} ^\infty _d \otimes \mathscr{L} (\mathcal{H} , \mathcal{J} ) ] _1$ is said to be CE if and only if $a \in [ \mathbb{H} ^\infty _d \otimes \mathscr{L} (\mathcal{H} , \mathcal{J} ) ] _1$ and $c := \left( \begin{smallmatrix} b \\ a \end{smallmatrix} \right) \in [ \mathbb{H} ^\infty _d \otimes \mathscr{L} (\mathcal{H} , \mathcal{J} )  \otimes \mathbb{C} ^2 ] _1$ implies that $a \equiv 0$. 

Recall that any contractive $b \in [H^\infty ] _1$ has a \emph{de Branges--Rovnyak realization}:
$$ b(z) = D + C (I-zA) ^{-1} zB, \quad \quad z \in \mathbb{D}, $$ where, 
$$ A := S^* | _{\mathscr{H} (b)}, \quad B := S^* b, \quad C := (k_0 ^b) ^* \quad \mbox{and} \quad D:= b(0), $$ and $\mathscr{H} (b)$ is the de Branges--Rovnyak space of $b$ \cite[Theorem 1.1, Theorem 1.2, Theorem 1.3]{BBF}. This space is a Hilbert space of analytic functions in $\mathbb{D}$ which is contractively contained in $H^2$, and it is a RKHS with reproducing kernel:
$$ k^b (z,w) := \frac{1-b(z) b(w) ^*}{1-zw^*}. $$ In particular, $b$ is inner if and only if $\mathscr{H} (b)$ is contained isometrically in $H^2$ and in this case $\mathscr{H} (b) = (bH ^2 ) ^\perp$. The contractive multiplier, $b$, is said to be the \emph{transfer function} of the (observable, co-isometric) \emph{de Branges--Rovnyak colligation}, $U_b$, defined by:
$$ U_b := \begin{pNiceMatrix} A & B \\ C & D \end{pNiceMatrix} : \begin{pNiceMatrix} \mathscr{H} (b ) \\ \mathbb{C} \end{pNiceMatrix} \rightarrow \begin{pNiceMatrix} \mathscr{H} (b) \\ \mathbb{C} \end{pNiceMatrix}, $$ see \cite{BBF}. 

These de Branges--Rovnyak realizations were extended to the NC multi-variable setting of contractive multipliers between vector--valued Fock spaces in \cite{BBF}. Given $b \in [ \mathbb{H} ^\infty _d \otimes \mathscr{L} ( \mathcal{H} , \mathcal{J} ) ] _1$, we define the \emph{right free de Branges--Rovnyak space} of $b$, $\mathscr{H} ^\mathrm{t} (b)$, as the complementary space of $b(R)$, $\mathscr{H} ^\mathrm{t} (b) := \mathscr{M} \left( \sqrt{I - b(R) b(R) ^*} \right)$. We will sometimes denote the norm and inner product of $\mathscr{H} ^\mathrm{t} (b)$ by $\| \cdot \| _b := \| \cdot \| _{\mathscr{H} ^\mathrm{t} (b)}$ and $\left\langle \cdot, \cdot \right\rangle _b := \left\langle \cdot, \cdot \right\rangle_{\mathscr{H} ^\mathrm{t} (b)}$, respectively. We will also employ the notations $\mathscr{M} ^\mathrm{t} (b) := \mathscr{M} (b (R) )$, $\mathscr{M} ^\mathrm{t} (b^*) := \mathscr{M} (b(R) ^*)$ and $\mathscr{H} ^\mathrm{t} (b ^* ) := \mathscr{M} \, \sqrt{I - b(R) ^* b(R)}$, where recall that if $A \in \mathscr{L} (\mathcal{H} , \mathcal{J})$ is a bounded linear operator, $\mathscr{M} (A)$ is the operator--range space of $A$. The right free de Branges--Rovnyak space, $\mathscr{H} ^\mathrm{t} (b)$, is a $\mathcal{J}-$valued NC-RKHS on $\mathbb{B} ^d _\mathbb{N}$ with CPNC kernel
$$ K ^b (Z,W) [P \otimes I_\mathcal{H}] := K (Z,W)[P] \otimes I_\mathcal{H} - K(Z,W) \otimes \mathrm{id} _{\mathcal{H}} [ b^\mathrm{t} (Z) (P \otimes I _{\mathcal{H}}) b^\mathrm{t} (W) ^* ], $$ for $Z \in \mathbb{B} ^d _n$, $W \in \mathbb{B} ^d _m$ and $P \in \mathbb{C} ^{n \times m}$. Set $X := L^* \otimes I_\mathcal{J} | _{\mathscr{H} ^\mathrm{t} (b)}$ and $\boldsymbol{b} := L^* \otimes I _{\mathcal{J}} \, b (R) 1 \otimes I _\mathcal{H}  \in \mathscr{L} \left( \mathcal{H} ,  \mathscr{H} ^\mathrm{t} (b) \otimes \mathbb{C} ^d \right)$. Namely,
$$ \boldsymbol{b} \, h := L^* \otimes I _{\mathcal{J}} \, b (R)  1 \otimes h \in \mathscr{H} ^\mathrm{t} (b) \otimes \mathbb{C} ^d, $$ for any $h \in \mathcal{H}$. Here, in parallel with the classical theory, any right free de Branges--Rovnyak space, $\mathscr{H} ^\mathrm{t} (b)$ for $b \in [ \mathbb{H} ^\infty _d ] _1$ is contractively contained in $\mathbb{H} ^2 _d$, is left shift co-invariant and while $b ^\mathrm{t}$ generally does not belong to $\mathscr{H} ^\mathrm{t} (b)$, its backward left shifts always do, $L^* b^\mathrm{t} \in \mathscr{H} ^\mathrm{t} (b ) \otimes \mathbb{C} ^d$ \cite[Proposition 4.2]{BBF}. Analogous statements hold for operator--valued $b$.

Any such $b \in [ \mathbb{H} ^\infty _d \otimes \mathscr{L} (\mathcal{H} , \mathcal{J} ) ] _1$ is realized as the transfer--function of the co-isometric right \emph{free de Branges--Rovnyak colligation} $U_b$, defined by 
\begin{align*} U_b &:= \begin{pNiceMatrix} A & B \\ C & D \end{pNiceMatrix} : \begin{pNiceMatrix} \mathscr{H} ^\mathrm{t} ( b) \\ \mathcal{H} \end{pNiceMatrix} \rightarrow \begin{pNiceMatrix} \mathscr{H} ^\mathrm{t} (b) \otimes \mathbb{C} ^d \\ \mathcal{J} \end{pNiceMatrix}, \quad  \mbox{where} \quad
 A := X := L^* \otimes I _\mathcal{J} | _{\mathscr{H} ^\mathrm{t} (b)}, \\ \quad B & := L^* \otimes I_\mathcal{J} \,  b(R) \,  1 \otimes I_\mathcal{H} =\boldsymbol{b}, \quad 
C := (K ^b _0 ) ^* \quad \mbox{and} \quad D := b(0), \end{align*} see \cite{BBF}. For any $b \in [ \mathbb{H} ^\infty _d \otimes \mathscr{L} (\mathcal{H} , \mathcal{J} ) ] _1$, recall that 
$$ X^* X \leq  I - K_0 ^b (K_0 ^b)^* \quad \mbox{and} \quad \boldsymbol{b}^* \boldsymbol{b} \leq I_\mathcal{H} - b(0) ^* b(0), $$ \cite[Proposition 4.2]{BBF}. Further recall that $b$ is column--extreme if and only if equality holds in either (and hence both) of these formulas \cite[Theorem 6.4]{JM-freeCE}. That is, $b$ is CE if and only if
$$ X ^* X = I - K_0 ^b (K_0 ^b)^*, $$ or equivalently,
$$ \boldsymbol{b} ^* \boldsymbol{b} = I _\mathcal{H} - b(0) ^* b(0). $$  
Given $X := L^* \otimes I_\mathcal{J} | _{\mathscr{H} ^\mathrm{t} (b)}$ for some $b \in [ \mathbb{H} ^\infty _d  \otimes \mathscr{L} (\mathcal{H} , \mathcal{J} )] _1$, let
$$ X ^{(n)} :=  (X \otimes I_d \otimes I_{n-1}) \cdots (X \otimes I_d \otimes I_d ) (X \otimes I_d) X, $$ so that for any $h \in \mathscr{H} ^\mathrm{t} (b)$, 
\begin{equation} \| X ^{(n)} h \| ^2 _b = \sum _{|\alpha | = n} \| X ^\alpha h \| ^2 _b = \left\langle h, \mathrm{Ad} _{X^*, X} ^{(n)} (I) h \right\rangle_b, \label{amplify} \end{equation} where recall that $\| \cdot \| _b$ and $\left\langle \cdot, \cdot \right\rangle_b$ denote the norm and inner product in $\mathscr{H} ^\mathrm{t} (b)$. Here, given a row $d-$tuple of operators, $T = ( T_1 , \cdots , T_d ) : \mathcal{K} \otimes \mathbb{C} ^d \rightarrow \mathcal{K}$, the completely positive linear map of \emph{adjunction by $T$ and $T^*$}, $\mathrm{Ad} _{T, T^*} : \mathscr{L} (\mathcal{K} ) \rightarrow \mathscr{L} (\mathcal{K} )$, is defined as
\begin{equation} \mathrm{Ad} _{T, T^*} ( A ) := T_1 A T_1^* + \cdots + T_d A T_d ^*. \label{adjunction} \end{equation} The row $d-$tuple, $T$, is said to be \emph{pure} if $\mathrm{Ad} ^{(n)} _{T, T^*} (I) \stackrel{SOT}{\rightarrow} 0$, where $SOT$ denotes the strong operator topology.

\begin{thm} \label{pureisinner}
Let $b \in [ \mathbb{H} ^\infty _d \otimes \mathscr{L} (\mathcal{H} , \mathcal{J} ) ] _1$ be column--extreme (CE). Then $b$ is inner if and only if $X^*$ obeys the weak purity condition, $\| X ^{(n)} \boldsymbol{b} h \| \rightarrow 0$ for any $h \in \mathcal{H}$. In particular, $X^*$ is pure if and only if $b$ is inner.
\end{thm}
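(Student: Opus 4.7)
The strategy is to derive a recursive identity from the coisometric de Branges--Rovnyak colligation under the column--extreme hypothesis, and to let the recursion run in both directions. First, for the implication $b$ inner $\Rightarrow$ \emph{full} purity: if $b$ is inner then $b(R)$ is isometric, so $I - b(R)b(R)^*$ is an orthogonal projection, its positive square root coincides with it, and $\scr{H}^\mrt(b)$ sits \emph{isometrically} in $\hardy \otimes \cJ$ rather than merely contractively. Hence $X = L^* \otimes I_\cJ|_{\scr{H}^\mrt(b)}$ is the literal restriction of the backward shift. Because the left free shift is a pure row isometry on $\hardy$, meaning $\sum_{|\omega|=n} L^\omega L^{*\omega} \to 0$ strongly, $X^*$ inherits this purity, so weak purity of $X^*$ at $\mbf{b}h$ is immediate. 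Since full purity trivially implies weak purity, the ``in particular'' assertion will follow as soon as weak purity is shown to force $b$ inner.

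For the converse, the CE hypothesis provides the two identities from the excerpt, $\mbf{b}^* \mbf{b} = I_\cH - b(0)^* b(0)$ and $X^* X + C^* C = I_{\scr{H}^\mrt(b)}$, which read
\begin{equation*}
\|h\|_\cH^2 = \|b(0)h\|_\cJ^2 + \|\mbf{b} h\|_b^2 \quad (h \in \cH), \qquad \|f\|_b^2 = \|Cf\|_\cJ^2 + \|Xf\|_{b \otimes \C^d}^2 \quad (f \in \scr{H}^\mrt(b)).
\end{equation*}
I would iterate the second identity $n$ times on each component $\mbf{b}_i h$ of $\mbf{b} h$, using the Fourier-coefficient identification $C X^\omega \mbf{b}_i h = \hat{b}^\mrt_{i\omega} h$. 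This identification follows because $\mbf{b}_i h = L_i^* (b^\mrt h)$, $X$ restricts $L^*$, and $C$ extracts the constant Fourier coefficient; together they just shift the Taylor coefficients of $b^\mrt$. The resulting ``observability--type'' identity, telescoping the first equation above into the iterated second, is
\begin{equation*}
\|h\|_\cH^2 = \sum_{|\omega| \leq n} \|\hat{b}^\mrt_\omega h\|_\cJ^2 + \|X^{(n)} \mbf{b} h\|_b^2.
\end{equation*}

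Assuming weak purity, letting $n \to \infty$ and applying Parseval in $\hardy \otimes \cJ$ yields $\|h\|_\cH^2 = \|b^\mrt h\|_{\hardy \otimes \cJ}^2 = \|b(R)(1 \otimes h)\|^2$ for every $h \in \cH$. Because $b(R) = M^R_{b^\mrt}$ commutes with every $L_j$, for each word $\omega$ one has $\|b(R)(z^\omega \otimes h)\|^2 = \|L^\omega b(R)(1\otimes h)\|^2 = \|h\|^2 = \|z^\omega \otimes h\|^2$, so for any ONB $\{h_i\}$ of $\cH$ the positive semidefinite operator $I - b(R)^* b(R) \geq 0$ has vanishing diagonal in the ONB $\{z^\omega \otimes h_i\}$ of $\hardy \otimes \cH$; positivity then forces it to vanish, and $b$ is inner. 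The main obstacle is simply bookkeeping in the Fourier-coefficient identification: although $\scr{H}^\mrt(b)$ is only contractively contained in $\hardy \otimes \cJ$, the colligation identities hold with the intrinsic $\scr{H}^\mrt(b)$--norm, whereas the Taylor coefficients of $\mbf{b}_i h \in \scr{H}^\mrt(b)$ must be computed by viewing $\mbf{b}_i h$ simultaneously as an element of $\hardy \otimes \cJ$. Once this double bookkeeping is set up carefully, the recursion unrolls cleanly and the result follows.
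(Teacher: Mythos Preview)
Your proof is correct and follows essentially the same approach as the paper: both directions use the CE identities $X^*X = I - K_0^b(K_0^b)^*$ and $\mbf{b}^*\mbf{b} = I_\cH - b(0)^*b(0)$ to telescope $\|X^{(n)}\mbf{b}h\|_b^2$ down to $\|h\|^2 - \sum_{|\omega|\le n}\|\hat{b}_\omega h\|^2$, yielding $\|b(R)(1\otimes h)\| = \|h\|$ under weak purity. The only cosmetic difference is in the endgame: the paper invokes the Davidson--Pitts observation (Cauchy--Schwarz equality forces $b(R)^*b(R)L^\omega 1\otimes h = L^\omega 1\otimes h$), whereas you argue that a positive operator with vanishing diagonal in an ONB is zero---both are equally elementary and accomplish the same thing. (Your index $\hat{b}^\mrt_{i\omega}$ has a harmless transpose ambiguity, but since you sum $\|\cdot\|^2$ over all words of each length this does not affect the identity.)
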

By \cite[Theorem 4.6]{BBF}, $b \in [ \mathbb{H} ^\infty _d \otimes \mathscr{L} (\mathcal{H} , \mathcal{J} ) ] _1$ is inner if and only if $X^*$ is pure (or \emph{strongly--stable} in the language of \cite{BBF}). The above theorem is slight weakening of this assumption in the case of a CE multiplier -- instead of checking that $\mathrm{Ad} _{X^*, X} ^{(n)} (I ) x$ converges to $0$ for any $x \in \mathscr{H} ^\mathrm{t} (b)$, it suffices to show that $ \| X^{(n)} \boldsymbol{b} h \| ^2 = \left\langle  \boldsymbol{b} h, \mathrm{Ad} _{X^*,X} ^{(n)} (I ) \boldsymbol{b} h \right\rangle \rightarrow 0$ for any $h \in \mathcal{H}$. (Here, recall that $WOT$ convergence of a sequence of self-adjoint operators to $0$ implies $SOT$ convergence.) In particular, if $b \in [\mathbb{H} ^\infty _d ] _1$ is scalar, it suffices to check that $\| X ^{(n)} \boldsymbol{b} \| \rightarrow 0$.
\begin{proof}
If $b$ is inner then $\mathscr{H} ^\mathrm{t} (b) = \mathrm{Ran} \, b(R)  \, ^\perp \subseteq \mathbb{H} ^2 _d \otimes \mathcal{J}$, so that $\mathscr{H} ^\mathrm{t} (b)$ is a closed, co-invariant subspace for $L \otimes I_\mathcal{J}$. It is then easily checked that $X^*$ is pure since $L \otimes I_\mathcal{J}$ is.

Conversely, if $b$ is CE then we have that $X ^* X = I - K_0 ^b (K_0 ^b ) ^*$. Suppose now that $X$ is pure and consider $\| X^{(n)} \boldsymbol{b} h \| ^2 _b$ where $\boldsymbol{b} h = L^* \otimes I_\mathcal{J} \, b (R) 1 \otimes h \in \mathscr{H} ^\mathrm{t} (b) \otimes \mathbb{C} ^d$ and $\| h \| _\mathcal{H} =1$. Hence $\| X^{(n)} \boldsymbol{b} h \| ^2 _b \rightarrow 0$ as $n \uparrow + \infty$. Then,
\begin{eqnarray} \| X^{(n)} \boldsymbol{b} h \| ^2 & = & \left\langle  X^{(n-1)} \boldsymbol{b} h, X^*X \otimes I_d \otimes I_{n-1} X^{(n-1)} \boldsymbol{b} h \right\rangle_b \nonumber \\ & = & \| X^{(n-1)} \boldsymbol{b} h \| ^2 _b -  \left\langle  X^{(n-1)} \boldsymbol{b} h, K_0 ^b (K_0 ^b)^* \otimes I_d \otimes I_{n-1} X^{(n-1)} \boldsymbol{b} h \right\rangle _b  \nonumber \\
& = & \| X^{(n-1)} \boldsymbol{b} h \| ^2 _b - \left\langle  L^{*(n)} b(R) 1 \otimes h, K_0 ^b (K_0 ^b)^* \otimes I_d \otimes I_{n-1} L^{*(n)} b (R) 1 \otimes h \right\rangle _b \nonumber \\
& = & \| X ^{(n-1)} \boldsymbol{b} h \| ^2 _b - \sum _{|\alpha | = n} \| \hat{b} _\alpha h  \| _\mathcal{J} ^2 \nonumber \\
\cdots & = & \| \boldsymbol{b} h \| ^2 _b - \sum _{|\alpha | =1} ^n \| \hat{b} _\alpha h \| _\mathcal{J} ^2 \nonumber \\
& =& 1 - \sum _{|\alpha | \leq n} \| \hat{b} _\alpha h \| _{\mathcal{J}} ^2 \nonumber \\
& \rightarrow & 1 - \sum _{\alpha} \| \hat{b} _\alpha h  \| _{\mathcal{J} } ^2 = 1 - \| b(R) 1 \otimes h \| _{\mathbb{H} ^2 \otimes \mathcal{J}} ^2. \nonumber \end{eqnarray} In the above we used that if $b$ is CE, then $\| \boldsymbol{b} h \| ^2 _b = 1 - \| b(0) h \| _\mathcal{J} ^2$. Since $X^*$ is pure, it follows that $\| b (R) 1 \otimes h \| ^2 _{\mathbb{H} ^2 \otimes \mathcal{J}} = 1$. Since we also have that $b \in [ \mathbb{H} ^\infty _d \otimes \mathscr{L} (\mathcal{H} , \mathcal{J} ) ] _1$, an observation of Davidson--Pitts implies that $b$ is inner \cite[Proposition 2.4]{DP-inv}. Indeed, if $b \in [ \mathbb{H} ^\infty _d \otimes \mathscr{L} (\mathcal{H} , \mathcal{J} ) ] _1$ is a contractive multiplier so that $\| b(R) 1 \otimes h \| _{\mathbb{H} ^2 \otimes \mathcal{J} } =1$ for any unit norm $h \in \mathcal{H}$, then for any $\omega \in \mathbb{F} ^d$, 
\begin{eqnarray} \| b(R) L^\omega 1 \otimes h \| ^2 _{\mathbb{H} ^2 \otimes \mathcal{J} } & = & \| L ^\omega \otimes I_\mathcal{J} \,  b(R) 1 \otimes h \| ^2 _{\mathbb{H} ^2 \otimes \mathcal{J} } \nonumber \\
& = & \| b (R) 1 \otimes h  \| ^2   =1 = \| L^\omega 1 \otimes h \| ^2  _{\mathbb{H} ^2 \otimes \mathcal{H}}. \nonumber \end{eqnarray} Hence,
\begin{eqnarray} 1 & = & \left\langle L^\omega 1 \otimes h, b(R) ^* b(R) L^\omega 1 \otimes h \right\rangle_{\mathbb{H} ^2 \otimes \mathcal{H}} \nonumber \\
& \leq & \| L ^\omega 1 \otimes h \|  \| b(R) ^* b(R) L^\omega 1 \otimes h \|  \nonumber \\
& \leq & \| b (R) ^* b(R) \| \| L^\omega 1 \otimes h \| \leq 1. \nonumber \end{eqnarray} 
Since equality holds in the Cauchy--Schwarz inequality, we must have that $b(R) ^* b(R) L^\omega 1 \otimes h = \zeta L^\omega 1 \otimes h$, for some $\zeta \in \mathbb{C}$. Since $b(R) ^* b(R)$ is positive semi-definite, $\zeta \geq 0$ and since $\| b (R) L^\omega 1 \otimes h \| =1$, $\zeta =1$. It follows that for any free polynomial, $p \in \mathbb{C} \{ \mathbb{\mathfrak{z}} \} $ and unit norm $h \in \mathcal{H}$, $$ b(R) ^* b(R) p(L) 1 \otimes h = b(R) ^* b(R) \sum _{|\omega | \leq N} \hat{p} _{\omega} L^\omega 1 \otimes h = p (L) 1 \otimes h. $$ By the density of the free polynomials in Fock space, $b(R)^* b(R) =I$ and $b$ is inner.
\end{proof}
\begin{prop} \label{rowuniCE}
$b \in [\mathbb{H} ^\infty _d  \otimes \mathscr{L} (\mathcal{H} , \mathcal{J} ) ] _1$ is column--extreme if and only if its canonical de Branges--Rovnyak colligation is both isometric and co-isometric.
\end{prop}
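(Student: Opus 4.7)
The plan is to exploit the fact that the canonical free de Branges--Rovnyak colligation $U_b$ is \emph{always} a co-isometry by construction (this is the content of \cite{BBF} already invoked in the excerpt), so the proposition reduces to showing that $b$ is column--extreme if and only if $U_b$ is additionally isometric, i.e., unitary. Writing out the $2 \times 2$ block form of $U_b^*U_b$ with $A = X$, $B = \mbf{b}$, $C = (K_0^b)^*$, $D = b(0)$, one computes
\[
U_b^* U_b = \bpm X^*X + K_0^b (K_0^b)^* & X^* \mbf{b} + K_0^b\, b(0) \\ \mbf{b}^* X + b(0)^* (K_0^b)^* & \mbf{b}^* \mbf{b} + b(0)^* b(0) \epm,
\]
so that the $(1,1)$ and $(2,2)$ diagonal blocks are exactly $X^*X + K_0^b (K_0^b)^*$ and $\mbf{b}^* \mbf{b} + b(0)^* b(0)$, respectively. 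The two equivalent CE characterizations recalled in the excerpt, $X^*X = I - K_0^b (K_0^b)^*$ and $\mbf{b}^* \mbf{b} = I_\cH - b(0)^* b(0)$, then say \emph{precisely} that these two diagonal blocks equal the identities of $\scr{H}^\mrt(b)$ and $\cH$.

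The central step is then a standard positive-semi-definite trick. Since $U_b$ is a co-isometry it is a contraction, so $I - U_b^* U_b \geq 0$. If $b$ is CE then both diagonal blocks of the PSD block operator $I - U_b^* U_b$ vanish; but a positive semi-definite block operator whose diagonal blocks are zero must itself be zero (the elementary fact that $\bsm 0 & x \\ x^* & 0 \esm \geq 0$ forces $x = 0$, applied to each block entry). Hence $U_b^* U_b = I$, so $U_b$ is also isometric and thus unitary. Conversely, if $U_b$ is isometric then $U_b^* U_b = I$, and reading off its $(1,1)$ block yields $X^* X + K_0^b (K_0^b)^* = I$, which is exactly the CE condition.

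The main obstacle, or more honestly the main clean idea, is recognizing the PSD-zero-diagonal principle; once it is in hand the proof is essentially bookkeeping, since the two CE characterizations already given in the excerpt land in the diagonal blocks of $U_b^* U_b$ by direct computation. The only minor care is to confirm the identification $C^* C = K_0^b (K_0^b)^*$ (which follows from $C = (K_0^b)^* : \scr{H}^\mrt(b) \to \cJ$) and its counterpart $B^* B = \mbf{b}^* \mbf{b}$, so that the two equivalent forms of CE line up with the two diagonal blocks as claimed.
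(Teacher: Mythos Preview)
Your proof is correct and takes a genuinely different route from the paper on the key step. Both proofs begin the same way: $U_b$ is always co-isometric, and writing out $U_b^*U_b$ in block form shows that the diagonal blocks equal $I$ and $I_\cH$ precisely under the CE characterizations $X^*X = I - K_0^b(K_0^b)^*$ and $\mbf{b}^*\mbf{b} = I_\cH - b(0)^*b(0)$. The paper then verifies by hand that the off-diagonal block $X^*\mbf{b} + K_0^b\, b(0)$ vanishes, via an explicit reproducing-kernel calculation: it computes $(K_Z^b)^*(X^*\mbf{b} + K_0^b\, b(0))$ using the action of $X$ on NC kernel vectors and the CE identity $\mbf{b}^*\mbf{b} = 1 - |b(0)|^2$, and checks that all terms cancel. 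You bypass this computation entirely with the positive-semi-definite trick: since $U_b$ is a co-isometry it is a contraction, so $I - U_b^*U_b \geq 0$, and a positive block operator with vanishing diagonal blocks is zero. This is cleaner and more robust --- it does not depend on any structural formulas for $X$ or the kernel vectors, only on the contractivity of $U_b$ and the two CE identities already recorded in the paper. The paper's approach, by contrast, gives an explicit verification that could in principle be reused in contexts where contractivity of the colligation is not known a priori.
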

\begin{proof}
For simplicity, assume that $b \in [ \mathbb{H} ^\infty _d ] _1$. Proof of the general case is analogous.
The de Branges--Rovnyak colligation of $b$ is always co-isometric \cite{BBF}. Here, the colligation is:
$$ U_b := \begin{pNiceMatrix} A & B \\ C & D \end{pNiceMatrix} : \begin{pNiceMatrix} \mathscr{H} ^\mathrm{t} (b) \\ \mathbb{C} \end{pNiceMatrix} \rightarrow \begin{pNiceMatrix} \mathscr{H} ^\mathrm{t} (b) \otimes \mathbb{C} ^d \\ \mathbb{C} \end{pNiceMatrix}, $$ 
$$ A := X := L^* | _{\mathscr{H} ^\mathrm{t} (b)}, \ B := \boldsymbol{b} := L^* b ^\mathrm{t} \in \mathscr{H} ^\mathrm{t} (b) \otimes \mathbb{C} ^d, \ C := (K_0 ^b ) ^*, \ D := b(0). $$ 
As we know, $b$ is CE if and only if $ X ^* X = I - K_0 ^b (K_0 ^b) ^*, $ or equivalently,
$ \boldsymbol{b} ^* \boldsymbol{b} = 1 - |b(0)| ^2. $ So now, we calculate:
$$ U_b ^* U_b = \begin{pNiceMatrix} \overbrace{X^* X + K_0 ^b (K_0 ^b) ^*}^{=I} & X^* \boldsymbol{b} + K_0 ^b b(0) \\ * & \underbrace{\boldsymbol{b} ^* \boldsymbol{b} + |b(0)| ^2}_{=1} \end{pNiceMatrix}. $$ It remains to show the off-diagonal components vanish. Indeed,
\begin{eqnarray} (K_Z ^b) ^* \left( X^* \boldsymbol{b} + K_0 ^b b(0) \right) & = & (X K_Z ^b) ^* \boldsymbol{b} + (K_Z ^b ) ^* K_0 ^b b(0) \nonumber \\
& = & \left( K_Z ^b Z^* - \boldsymbol{b} b ^\mathrm{t} (Z) ^* \right) ^* \boldsymbol{b}  + (I - b ^\mathrm{t} (Z) b(0) ^*) b(0) \nonumber \\
& = & Z \boldsymbol{b} (Z) - b ^\mathrm{t} (Z) \boldsymbol{b}^* \boldsymbol{b} + b(0) - b^\mathrm{t} (Z) |b(0)| ^2 \nonumber \\
& = & b ^\mathrm{t} (Z) - b(0) - b ^\mathrm{t} (Z) + b  ^\mathrm{t} (Z) |b(0)| ^2 + b(0) - b ^\mathrm{t} (Z) |b(0) | ^2 \nonumber \\
& = & 0. \nonumber \end{eqnarray} 
\end{proof}

\subsection{The Sarason function}

In one variable, as discussed in the introduction, $b \in [H^\infty ] _1$ is not an extreme point if and only if $1 -|b| ^2$ is log--integrable on the circle and in this case there is a unique outer function, $a \in H^\infty$,  so that $a(0) >0$ and
$$ |a (\zeta ) | ^2 = 1 - | b (\zeta ) | ^2; \quad \quad \zeta \ a.e. \ \partial \mathbb{D}. $$ This implies, in turn, that $c := \left( \begin{smallmatrix} b \\ a \end{smallmatrix} \right)$ is inner.
If $b \in [H^\infty ] _1$ is non-extreme, D. Sarason showed that this unique outer $a$ is realized as the transfer--function of the colligation:
$$ U_a := \begin{pNiceMatrix}[margin] \rowcolor{green!15} A & B \\ \rowcolor{blue!15} C_a & D_a \end{pNiceMatrix} : \begin{pNiceMatrix} \mathscr{H}  (b) \\ \mathbb{C} \end{pNiceMatrix} \rightarrow \begin{pNiceMatrix} \mathscr{H}  (b) \\ \mathbb{C} \end{pNiceMatrix}, $$ where 
$$ U_b = \begin{pNiceMatrix}[margin] \rowcolor{green!15} A & B \\ \rowcolor{green!15} C & D \end{pNiceMatrix} : \begin{pNiceMatrix} \mathscr{H}  (b) \\ \mathbb{C} \end{pNiceMatrix} \rightarrow \begin{pNiceMatrix} \mathscr{H}  (b) \\ \mathbb{C} \end{pNiceMatrix}, $$ is the de Branges--Rovnyak realization of $b$,
$$ C_a := -a(0) \left\langle b, \cdot \right\rangle_{\mathscr{H} (b)}, \quad D_a := a(0) >0 \quad \mbox{and} \quad a(0) ^2 := 1 - \| S^* b \| ^2 _{\mathscr{H} (b)} - |b(0) |^2 >0.$$ The column multiplier, $c = \left( \begin{smallmatrix} b \\ a \end{smallmatrix} \right)$, is then the transfer--function of the colligation:
$$ U_c := \begin{pNiceMatrix}[margin] \rowcolor{green!15} A & B \\ \rowcolor{green!15} C & D \\ \rowcolor{blue!15} C_a & D _a \end{pNiceMatrix} : \begin{pNiceMatrix} \mathscr{H} (b) \\ \mathbb{C}  \end{pNiceMatrix} \rightarrow \begin{pNiceMatrix} \mathscr{H} (b) \\ \mathbb{C} \\ \mathbb{C} \end{pNiceMatrix}. $$ (Sarason didn't state his results in the language of realization theory, but he computes the Taylor series coefficients of $a$ and these are easily seen to coincide with the formula given by the above realization \cite{Sarason-afun}.)  

Given a non-CE $b \in [\mathbb{H} ^\infty _d ]_1$ (or more generally any non-CE $b \in [ \mathbb{H} ^\infty _d \otimes \mathscr{L} (\mathcal{H} , \mathcal{J} ) ] _1$) we can construct an $a \in [\mathbb{H} ^\infty _d ] _1$ so that the column, $c = \left( \begin{smallmatrix} b \\ a \end{smallmatrix} \right) \in [\mathbb{H} ^\infty _d \otimes \mathbb{C} ^2 ]_1$ is contractive by checking that the analogous colligation:
$$ U_c := \begin{pNiceMatrix}[margin] \rowcolor{green!15}  \quad \quad  \mbox{---}  \quad  \boxed{U_b} & \quad  \mbox{---}  \\ \rowcolor{blue!15} -a(0) \left\langle b ^\mathrm{t}, \cdot \right\rangle_{\mathscr{H} ^\mathrm{t} (b)} & a(0) \end{pNiceMatrix} : \begin{pNiceMatrix} \mathscr{H} ^\mathrm{t} (b) \\ \mathbb{C}  \end{pNiceMatrix} \rightarrow \begin{pNiceMatrix} \mathscr{H} ^\mathrm{t} (b) \otimes \mathbb{C} ^d \\ \mathbb{C} ^2 \end{pNiceMatrix}, $$ is contractive. Here, as above, we define $a(0) >0$ and 
$$ a(0) ^2 := 1 -|b(0)| ^2 - \boldsymbol{b} ^* \boldsymbol{b} >0. $$ In fact, in \cite{JM-freeCE}, we check that $U_c$ is both isometric and co-isometric and it follows that $c$ is column--extreme by Proposition \ref{rowuniCE}. Further recall that $b^\mathrm{t} \in \mathscr{H} ^\mathrm{t} (b)$ if and only if $b$ is non-CE \cite[Theorem 6.4]{JM-freeCE}. In particular, it immediately follows that
$$ a(Z) = a(0)\left( I - \sum _{\omega \neq \emptyset} Z^\omega \left\langle b ^\mathrm{t}, X^\omega b^\mathrm{t} \right\rangle_b \right), $$ so that the non-vacuum coefficients of $a$ are given by the Sarason formulas:
$$ \hat{a} _\omega = -a(0) \left\langle b^\mathrm{t}, X^\omega b^\mathrm{t} \right\rangle_b; \quad \quad \omega \neq \emptyset.$$ 

To simplify notation, we will assume for the remainder of this section that $b \in [\mathbb{H} ^\infty _d ] _1$ or $c = \left( \begin{smallmatrix} b \\ a \end{smallmatrix} \right) \in [\mathbb{H} ^\infty _d \otimes \mathbb{C} ^2] _1$, where $a$ is the Sarason outer function of $b$. The proofs for general $b \in [ \mathbb{H} ^\infty _d \otimes \mathscr{L} (\mathcal{H} , \mathcal{J} ) ] _1$ are virtually identical.
\begin{cor} \label{Sarmaxouter}
Let $b \in [\mathbb{H} ^\infty _d ] _1$ be non-CE and let $a \in [\mathbb{H} ^\infty _d ]_1$ be its unique Sarason function.
Then $a$ is the maximal outer factor of $T = I - b(R)^* b(R)$. In particular,
$$ a(0) ^2 = \inf_{\substack{p \in \mathbb{C} \{ \mathbb{\mathfrak{z}} \} ; \\ p(0) =0}} \left\langle 1-p, (I-b(R)^* b(R)) (1-p) \right\rangle_{\mathbb{H} ^2}.$$ If $c = \left( \begin{smallmatrix} b \\ a \end{smallmatrix} \right)$, then $c$ is CE and
$\| \boldsymbol{c} \| ^2 _c =  \| \boldsymbol{b} \| ^2 _b, $ where $\boldsymbol{c} = L^* \otimes I_2 \, c ^\mathrm{t}$, $\boldsymbol{b} = L^* b ^\mathrm{t}$.
\end{cor}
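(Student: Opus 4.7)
The plan is to collect and reindex the ingredients already assembled by the preceding discussion. The column--extremality of $c$ follows from Proposition \ref{rowuniCE} together with the verification, in \cite{JM-freeCE}, that the colligation $U_c$ is both isometric and co-isometric. The norm identity $\| \mbf{c} \|_c^2 = \| \mbf{b} \|_b^2$ is then immediate: CE of $c$ yields $\mbf{c}^* \mbf{c} = I_\C - |c(0)|^2 = 1 - |b(0)|^2 - a(0)^2$, while the defining relation for $a(0)$ rearranges to $\| \mbf{b} \|_b^2 = 1 - |b(0)|^2 - a(0)^2$.

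For the maximal-factorizable-minorant claim, contractivity of $c$ gives $a(R)^* a(R) \leq T$, and the outerness of $a$ (from the Sarason construction, \cite{JM-freeCE}) makes this a factorizable left Toeplitz minorant of $T$. To establish maximality, I would invoke Popescu's theorem (Theorem \ref{Pop1}): $T$ admits a maximal factorizable minorant $T_0 = h_0(R)^* h_0(R)$ with $h_0 \in [ \mult ]_1$ outer and $h_0(0) > 0$. Maximality of $T_0$ means that $T - T_0$ has no nonzero factorizable minorant; equivalently (after passing to the outer factor of any candidate multiplier $g$ via the inner--outer factorization), the column $\bsm b \\ h_0 \esm$ is CE. By the uniqueness of the Sarason outer function for non-CE $b$ (established in \cite{JM-freeCE}, with the normalization $h_0(0) > 0$), this forces $h_0 = a$, so $a(R)^* a(R) = T_0$ and $a$ is indeed the maximal outer factor.

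For the Szeg\"o-type infimum identity, the lower bound is a direct computation: since $a(R)^* a(R) \leq T$,
\[
\inf_{p \in \fp_0} \ip{1-p}{T(1-p)}_{\hardy} \;\geq\; \inf_{p \in \fp_0} \| a(R)(1-p) \|_{\hardy}^2 \;=\; a(0)^2,
\]
where the final equality uses that outerness of $a$ makes $\{ p \cdot a^\mrt : p \in \fp_0 \}$ dense in $(\C \cdot 1)^\perp \subset \hardy$, so that the infimum is the squared modulus of the vacuum coefficient of $a^\mrt$, namely $a(0)^2$. The matching upper bound follows from Popescu's Szeg\"o-type formula (implicit in the proof of Theorem \ref{Pop1}), which gives the infimum as $|h_0(0)|^2 = a(0)^2$ once $h_0$ has been identified with $a$.

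The main obstacle, in my view, is the uniqueness of the Sarason outer function --- the step identifying $h_0$ with $a$ --- which is the key technical input taken from \cite{JM-freeCE}. A direct joint optimization bounding the infimum above by $a(0)^2$ seems delicate, because approximate minimizers of the two quantities $\ip{1-p}{(T - a(R)^* a(R))(1-p)}$ and $\| a(R)(1-p) \|^2$ need not coincide, so without the uniqueness shortcut one would have to reconstruct $a$ inside a Wold-style analysis of the row isometry $V_k \sqrt{T} = \sqrt{T} L_k$ on $\overline{\sqrt{T}\hardy}$.
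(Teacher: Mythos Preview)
Your deduction that $c$ is CE and the norm identity $\|\mbf{c}\|_c^2=\|\mbf{b}\|_b^2$ are fine; both follow from the paragraph preceding the corollary exactly as you say.

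The gap is in the identification $h_0=a$. You defer both the outerness of the Sarason function $a$ and the ``uniqueness of the Sarason outer function'' to \cite{JM-freeCE}, but neither is available there. What \cite{JM-freeCE} supplies (and what the paper quotes just before the corollary) is only that the colligation $U_c$ is isometric and co-isometric, hence that $c$ is CE. It does \emph{not} prove that $a$ is outer, nor that an outer $g$ with $g(0)>0$ and $\bsm b\\ g\esm$ CE is unique. Indeed, a priori CE of the column is strictly weaker than inner in the NC setting, so there is no obvious reason two distinct outer completions to a CE column could not exist; proving there is only one is precisely the content of this corollary. Your argument is therefore circular at the key step.

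The paper's proof fills this gap by a direct comparison. With $A$ the Popescu maximal outer factor and $C=\bsm b\\ A\esm$, CE of $C$ gives $\|\mbf{C}\|_C^2=1-|b(0)|^2-A(0)^2$, while by definition $\|\mbf{b}\|_b^2=1-|b(0)|^2-a(0)^2$, so $\|\mbf{C}\|_C^2-\|\mbf{b}\|_b^2=a(0)^2-A(0)^2$. The inequality $\|\mbf{b}\|_b\le\|\mbf{C}\|_C$ is then obtained by reading off the $(1,1)$ entry of the CPNC kernel inequality characterizing $\|\mbf{C}\|_C$ (via \cite[Lemma 6.6]{JM-freeCE}), yielding $a(0)\ge A(0)$. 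From this, an inner--outer factorization argument shows $a$ must be outer (else its outer part would have strictly larger value at $0$, contradicting maximality of $A$), and then $a=A$ follows. The Szeg\"o infimum formula and the norm identity drop out once $a=A$ is known. This kernel comparison is the missing idea in your proposal; your final paragraph correctly senses that something like a Wold-type analysis is needed, but the paper's route via the $(1,1)$ block of $K^C$ is considerably shorter.
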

\begin{proof}
Consider $T = I - b(R) ^* b(R)$ and let $T_0 = A(R) ^* A(R) \leq T$ be the maximal factorable minorant with $A \in [\mathbb{H} ^\infty _d ] _1$ outer.
By \cite[Theorem 1.3]{Pop-entropy},
$$ |A(0) | ^2 = \inf_{p \in \mathbb{C} \{ \mathbb{\mathfrak{z}} \}  _0} \left\langle 1-p, (I-b(R)^* b(R)) (1-p) \right\rangle_{\mathbb{H} ^2},$$ and $A$ is maximal in the sense that if $D \in \mathbb{H} ^\infty _d \otimes \mathcal{H}$ is any column--outer so that $D(R) ^* D(R) \leq T$, then $D(R) ^* D(R) \leq A(R) ^* A(R)$. We can and do assume that $A(0) >0$. Note that $D(R)^* D(R) \leq T$ would imply that the column $C := \left( \begin{smallmatrix} b \\ D \end{smallmatrix} \right)$ is contractive. By applying Douglas factorization, there is a contraction, $F$, so that 
$ D(R) ^* =A(R) ^* F^*$, or equivalently, $F A(R) = D(R)$. Since $A$ is outer, $F$ commutes with the left shifts so that $F = F(R) \in [\mathbb{H} ^\infty _d ]_1$ \cite[Theorem 1.2]{DP-inv}. In fact, $F$ is also outer since $D$ is. It follows that the column, $\left( \begin{smallmatrix} b \\ A \end{smallmatrix} \right)$ is CE. If not, then there is some non-zero $D$ so that 
$$ \begin{pNiceMatrix} b \\ A \\ D \end{pNiceMatrix}, $$ is contractive and we can assume, without loss in generality that $A, D$ are outer. The previous argument then implies that $A(R)^* A(R) + D(R) ^* D(R) \leq A(R) ^* A(R)$ since $A$ is maximal and hence $D \equiv 0$.

To prove that $A=a$, where $A$ is the Popescu maximal outer factor of $I - b^* b$ and $a$ is the outer Sarason function of $b$, consider the column: $C := \left( \begin{smallmatrix} b \\ A \end{smallmatrix} \right)$. This is contractive and CE, so that 
\begin{eqnarray} \| \boldsymbol{C} \| ^2 _C & = & \boldsymbol{C} ^* \boldsymbol{C}  =   1 - |b(0) | ^2 - A(0)^2 \nonumber \\
& = & \| \boldsymbol{b} \| ^2 _b + a(0) ^2 - A(0) ^2, \nonumber \end{eqnarray} where recall that $a(0) >0$ by definition and we can assume that $A(0) > 0$ as well. 
It follows that 
$$ \| \boldsymbol{C} \| ^2 _C  - \| \boldsymbol{b} \| ^2 _b  = a(0)  ^2 - A(0)  ^2. $$ 
The CPNC kernel for $\mathscr{H} ^\mathrm{t} (C)$ is:
$$ K^C (Z,W) := K(Z,W) [ \cdot ] - K(Z,W) \left[ \begin{pNiceMatrix} b ^\mathrm{t} (Z) (\cdot) b ^\mathrm{t} (W) ^* & * \\ * & A ^\mathrm{t} (Z) (\cdot ) A ^\mathrm{t} (W) ^* \end{pNiceMatrix}\right] \geq 0. $$ 
It follows that the norm of $\| \boldsymbol{C} \| ^2 _C $ is the infimum over all $t>0$ so that
$$ \boldsymbol{C} (Z) (\cdot ) \boldsymbol{C} (W) ^* = \left( \begin{smallmatrix} \boldsymbol{b} (Z) \\ \boldsymbol{A} (Z) \end{smallmatrix} \right) [ \cdot ] \left( \begin{smallmatrix} \boldsymbol{b} (W) ^*, & \boldsymbol{A} (W) ^* \end{smallmatrix} \right) \leq t^2 K^C (Z,W) \otimes I_d \, [ \cdot ], $$ as CPNC kernels \cite[Lemma 6.6]{JM-freeCE}. Comparing $(1,1)$ entries, we see that
$$ \| \boldsymbol{b} \| ^2 _b \leq \| \boldsymbol{C} \| ^2 _C. $$ 
In particular, $$ 0 \leq  \| \boldsymbol{C} \| ^2 _C  - \| \boldsymbol{b} \| ^2 _b = a(0) ^2 - A (0) ^2, $$ and $a(0) \geq A(0)$.

If $a$ was not outer, then $a = \Theta \cdot F$ for some non-trivial inner $\Theta$ and outer $F$ and then $a(0) ^2 < |F(0)| ^2$ and 
$F(R) ^* F(R) = a(R) ^* a(R) \leq I- b(R) ^* b(R) =: T$. However, $A$ is the maximal outer factor of $T$, so that 
$F(R)^* F(R) \leq A(R) ^* A(R)$ and $F(R) = h(R) A(R)$ for some contractive and non-trivial outer $h$ \cite[Theorem 3.1]{MS-dBB}. In particular $a(0) ^2 < |F(0)| ^2 < A(0) ^2 $, contradicting that $a(0) ^2 \geq A(0)^2$. This proves that $A=a$ and it follows that if $c = \left( \begin{smallmatrix} b \\ a \end{smallmatrix} \right)$, then $c=C$ is column--extreme and 
$$ \| \boldsymbol{c} \| ^2 _c = \left\|  \left( \begin{smallmatrix} L^* b ^\mathrm{t} \\  L^* a ^\mathrm{t} \end{smallmatrix} \right)  \right\| ^2 _c  = \| \boldsymbol{b} \| ^2 _b.$$ 
\end{proof}
\begin{cor} \label{SarSar}
Let $b = \theta \cdot f$ be the inner--outer factorization of a non-CE $b \in [\mathbb{H} ^\infty _d ] _1$ where $f(0) > 0$. If $a$ is the Sarason outer function of $b$, then $f$ is the Sarason outer function of $a$. 
\end{cor}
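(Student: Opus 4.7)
The plan is to reduce to the case of outer $b$ using the inner factor $\theta$, then apply the maximal--minorant characterization of the Sarason function from Corollary \ref{Sarmaxouter}.

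Since $\theta$ is a left--inner multiplier, the right multiplication $\theta(R) = M^R_{\theta^\mrt}$ is isometric as well (being unitarily equivalent via $U_\mrt$ to the isometry $M^L_\theta$). Hence $b(R)^* b(R) = f(R)^* \theta(R)^* \theta(R) f(R) = f(R)^* f(R)$, and consequently $I - b(R)^* b(R) = I - f(R)^* f(R)$. By Corollary \ref{Sarmaxouter}, the Sarason outer function of a non-CE multiplier is characterized (up to normalization by positive constant term) as the unique outer factor of the maximal factorizable Toeplitz minorant of $I - (\cdot)^* (\cdot)$, and this characterization depends only on $(\cdot)^* (\cdot)$. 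Hence $a$ is also the Sarason outer function of $f$, and the column $\bsm f \\ a \esm$ is CE (in particular $f$ is non-CE, since the nonzero $a$ provides a contractive extension). The corollary is thereby reduced to the involution claim: if $f$ is outer and non-CE with Sarason function $a$, then $f$ is the Sarason function of $a$.

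By Corollary \ref{Sarmaxouter} applied now to $a$, it suffices to show that $f(R)^* f(R)$ is the maximal factorizable Toeplitz minorant of $I - a(R)^* a(R)$. Contractivity of the CE column $\bsm f \\ a \esm$ gives $f(R)^* f(R) \leq I - a(R)^* a(R)$, so $f(R)^* f(R)$ is a factorizable minorant. For maximality, let $g$ denote the Sarason function of $a$; then $g(R)^* g(R) \geq f(R)^* f(R)$, and Douglas factorization produces a contractive right multiplier $C$ with $C(0) > 0$ and $f(R) = C(R) g(R)$; moreover $C$ is outer since $f$ is. If $C$ were non-CE, then any nonzero factorizable minorant $E(R)^* E(R) \leq I - C(R)^* C(R)$ would yield a nonzero multiplier $Eg$ with $\bsm a \\ f \\ Eg \esm$ contractive (since $a^* a + f^* f + g^* E^* E g \leq a^* a + g^* (C^* C + E^* E) g \leq a^* a + g^* g \leq I$), contradicting the CE of $\bsm a \\ f \esm$. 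Hence $C$ is CE.

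The main obstacle is to upgrade ``$C$ is outer, CE, with $C(0) > 0$'' to ``$C = 1$'', equivalently $g = f$. My plan is to combine the iso-plus-co-isometric DBR colligation of the CE column $\bsm f \\ a \esm$ granted by Proposition \ref{rowuniCE} with the explicit Sarason formula $\hat g_\om = -g(0) \ip{a^\mrt}{X_a^\om a^\mrt}_a$ for the Taylor coefficients of $g$ (where $X_a = L^* |_{\scr{H}^\mrt(a)}$): the colligation structure furnishes a natural unitary identification of DBR data between $\scr{H}^\mrt(f)$ and $\scr{H}^\mrt(a)$ that intertwines the respective backward shifts with the vectors $f^\mrt$ and $a^\mrt$, and under this identification the coefficient formulas for $g$ read off the Taylor coefficients of $f$ directly. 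Verifying this coefficient match word by word is the key technical step and completes the identification $g = f$.
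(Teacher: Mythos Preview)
The paper states Corollary~\ref{SarSar} without proof, so there is no argument to compare against directly. Your reduction to outer $b=f$ is correct and efficient: since $\theta(R)$ is isometric, $I-b(R)^*b(R)=I-f(R)^*f(R)$, so $a$ is the Sarason function of $f$ as well, and the column $\bsm f\\ a\esm$ inherits the CE property from $\bsm b\\ a\esm$. Your derivation that $f=C g$ with $C$ outer, contractive, $C(0)>0$, and $C$ column--extreme is also correct.

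The gap is in the final step. You assert a ``natural unitary identification of DBR data between $\scr{H}^\mrt(f)$ and $\scr{H}^\mrt(a)$ that intertwines the respective backward shifts with the vectors $f^\mrt$ and $a^\mrt$,'' and defer the coefficient match as ``the key technical step.'' Neither the identification nor the coefficient computation is carried out, and the tools you cite do not obviously furnish them. Proposition~\ref{rowuniCE} says the de Branges--Rovnyak colligation of the CE column $c'=\bsm f\\ a\esm$ on $\scr{H}^\mrt(c')$ is both isometric and co-isometric; it does \emph{not} produce a unitary between $\scr{H}^\mrt(f)$ and $\scr{H}^\mrt(a)$. Theorem~\ref{orthodecomp} gives an onto isometry $E_1:\scr{H}^\mrt(c')\to\scr{H}^\mrt(f)$, but the companion statement you would need---that $E_2:\scr{H}^\mrt(c')\to\scr{H}^\mrt(a)$ is an onto isometry---is precisely what Theorem~\ref{orthodecomp} would yield \emph{if} $f$ were already known to be the Sarason function of $a$, so invoking it is circular. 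In short, ``$C$ outer, CE, $C(0)>0$'' does not by itself force $C\equiv 1$ in the NC setting (that is close to the open Question~\ref{openQ}), and you have not supplied the extra input---coming from the specific relation $a=\text{Sarason}(f)$---that closes the argument. What is written is a reasonable outline, but the decisive step is missing.
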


\begin{cor} \label{CEcolumn}
Suppose that $b \in [\mathbb{H} ^\infty _d ] _1$ is non-CE and let $a \in [\mathbb{H} ^\infty _d ] _1$ be its Sarason function. We have that $c = \left( \begin{smallmatrix} b \\ a \end{smallmatrix} \right)$ is inner if and only if $\| X ^{(n)} \boldsymbol{b} \| ^2 _b = \| X ^{(n)} \boldsymbol{c}  \| _c ^2 \rightarrow 0$. In particular, $c$ is inner if and only if $X _c ^* = \left( L^* \otimes I_2 | _{\mathscr{H} ^\mathrm{t} (c)} \right) ^*$ is pure, or equivalently if and only if $X_b ^*$ is pure.
\end{cor}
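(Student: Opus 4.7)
By Corollary~\ref{Sarmaxouter}, the column $c = \bsm b \\ a \esm \in [\mult \otimes \C^2]_1$ is column--extreme with $\|\mbf{c}\|_c^2 = \|\mbf{b}\|_b^2 = 1 - |b(0)|^2 - a(0)^2$. Proposition~\ref{rowuniCE} then gives that the canonical de Branges--Rovnyak colligation $U_c$ of $c$ is both isometric and co-isometric, and Theorem~\ref{pureisinner} applied to the CE multiplier $c$ at once yields: $c$ is inner $\Leftrightarrow$ $X_c^*$ is pure $\Leftrightarrow$ $\|X_c^{(n)}\mbf{c}\|_c^2 \to 0$. Thus the corollary reduces to two tasks: (i) the pointwise identity $\|X_b^{(n)}\mbf{b}\|_b^2 = \|X_c^{(n)}\mbf{c}\|_c^2$ for every $n\geq 0$, and (ii) the equivalence ``$X_b^*$ pure $\Leftrightarrow$ $X_c^*$ pure.''

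For (i), I would argue by telescoping and show both sides equal the common quantity $1 - \sum_{|\omega|\leq n}\|\hat c_\omega\|_{\C^2}^2 = 1 - \sum_{|\omega|\leq n}\bigl(|\hat{b}_\omega|^2+|\hat{a}_\omega|^2\bigr)$. On the $c$--side, the isometry $U_c^*U_c=I$ gives $X_c^*X_c = I - K_0^c(K_0^c)^*$, hence $\|X_c\xi\|^2 = \|\xi\|_c^2 - \|\xi(0)\|_{\C^2}^2$ for every $\xi \in \scr{H}^\mrt(c)$; telescoping through $\mbf{c} = L^*\otimes I_2\, c^\mrt$ and reading off the vacuum coefficients of the backward shifts $L^{*\omega}c^\mrt$ produces the stated sum. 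On the $b$--side, the enlarged colligation $\tilde U_c$ (with state space $\scr{H}^\mrt(b)$) constructed in Section~\ref{sect:CE} is isometric since $c$ is CE, and its isometry relation yields the stronger recursion $\|X_b h\|_b^2 = \|h\|_b^2 - |h(0)|^2 - a(0)^2|\ip{b^\mrt}{h}_b|^2$ for all $h \in \scr{H}^\mrt(b)$. Telescoping $\mbf{b}$ through this, the first correction terms accumulate to $\sum_{|\omega|\leq n}|\hat{b}_\omega|^2$, while the Sarason formula $\hat{a}_\omega = -a(0)\ip{b^\mrt}{X_b^\omega b^\mrt}_b$ (for $\omega\neq\emptyset$) converts the $a(0)^2$--weighted corrections into $\sum_{|\omega|\leq n}|\hat{a}_\omega|^2$. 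Both telescopes therefore produce the same total, establishing (i).

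For (ii), I would invoke the essential uniqueness of observable co-isometric free realizations of a given transfer function. Both $\tilde U_c$ and $U_c$ are co-isometric (indeed isometric-and-co-isometric by Proposition~\ref{rowuniCE}) observable colligations with transfer function $c$; observability of $\tilde U_c$ is inherited from that of the dBR colligation $U_b$, since $(K_0^b)^*$ already appears as an output row of $\tilde U_c$. Consequently there is a unitary $W:\scr{H}^\mrt(b)\to\scr{H}^\mrt(c)$ intertwining the two colligations, so that $(W\otimes I_d)X_b = X_c W$ and $(W\otimes I_d)\mbf{b}=\mbf{c}$. Purity being a unitary invariant of the relevant row tuple, $X_b^*$ and $X_c^*$ are simultaneously pure, giving (ii). Chaining (i), (ii), and Theorem~\ref{pureisinner} applied to $c$ completes the proof.

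The main obstacle is the unitary equivalence in (ii): it is the step that concretely links the a priori distinct state spaces $\scr{H}^\mrt(b)$ and $\scr{H}^\mrt(c)$ and thereby permits the full (rather than weak) purity conditions to be compared. Once this intertwining is in hand, the remainder of the corollary is formal. (A self-contained alternative avoiding the uniqueness invocation would upgrade the recursion in (i) to all $h$ in a dense subspace of $\scr{H}^\mrt(b)$ generated by the observability data of $\tilde U_c$, and pass to the limit to recover full purity from weak purity.)
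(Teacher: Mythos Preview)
Your proposal is correct. Part (i) follows essentially the same path as the paper: both compute the identity $\|X_b^{(n)}\mbf{b}\|_b^2 = 1 - \sum_{|\omega|\le n}\hat c_\omega^*\hat c_\omega = \|X_c^{(n)}\mbf{c}\|_c^2$ by telescoping. What you phrase as the isometry relation of the enlarged colligation $\tilde U_c$ on $\scr{H}^\mrt(b)$ is exactly the paper's Lemma~\ref{rank2defect} (the rank--two defect formula $X_b^*X_b = I - K_0^b(K_0^b)^* - a(0)^2\ip{b^\mrt}{\cdot}_b b^\mrt$), and the paper performs the same telescoping calculation with it.

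Part (ii) is where the approaches diverge. The paper's own proof argues directly that $c$ inner implies $X_c^*$ pure, and then uses Theorem~\ref{pureisinner} on the CE multiplier $c$ together with the equality from (i) to close the loop through \emph{weak} purity. The implication ``$c$ inner $\Rightarrow$ $X_b^*$ pure'' is not spelled out separately there. Your route via essential uniqueness of observable co-isometric realizations (both $\tilde U_c$ on $\scr{H}^\mrt(b)$ and $U_c$ on $\scr{H}^\mrt(c)$ are observable and co-isometric with transfer function $c$, hence unitarily equivalent by \cite{BBF}) furnishes the intertwining unitary $W$ and hence the full purity equivalence $X_b^*\Leftrightarrow X_c^*$ in one stroke. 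This is precisely the special case $D\equiv 1$ of the isometry $E_1$ that the paper establishes later in Theorem~\ref{orthodecomp}, so your argument is legitimate and, for this Corollary taken in isolation, slightly more self-contained than the paper's.
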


\begin{lemma} \label{rank2defect}
If $b \in [\mathbb{H} ^\infty _d ] _1$ is not CE, then $X$ obeys the rank--two defect condition:
$$ X^* X = I - K_0 ^b (K_0 ^b) ^* - a(0) ^2 \left\langle b^\mathrm{t}, \cdot \right\rangle_b b^\mathrm{t}, $$ where $a$ is the free Sarason outer function of $b$. 
\end{lemma}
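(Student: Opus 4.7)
The plan is to obtain the claimed identity as the $(1,1)$ block of the isometry relation $U_c^* U_c = I$, where $U_c$ is the extended colligation
$$ U_c = \bpm X & \mbf{b} \\ (K_0^b)^* & b(0) \\ -a(0)\ip{b^\mrt}{\cdot}_b & a(0) \epm : \bpm \scr{H}^\mrt(b) \\ \C \epm \to \bpm \scr{H}^\mrt(b) \otimes \C^d \\ \C^2 \epm $$
constructed in the definition of the free Sarason function $a$ of $b$. The non-trivial input I would cite is the fact, asserted in the discussion preceding Corollary \ref{Sarmaxouter} and proved in \cite{JM-freeCE}, that this particular $U_c$ is both isometric and co-isometric; equivalently, $c = \bsm b \\ a \esm$ is CE by Proposition \ref{rowuniCE}. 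This is really the only place where the hypothesis (that $b$ is non-CE, so that $a$ exists with $a(0)>0$) enters.

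Once isometricity is in hand, the rest is a one-line block computation. I would read off the $(1,1)$ entry of $U_c^* U_c$ by taking inner products of the first column of $U_c$ against itself on $\scr{H}^\mrt(b)$, obtaining
$$ (U_c^* U_c)_{11} = X^* X + K_0^b (K_0^b)^* + C_a^* C_a, $$
where $C_a := -a(0)\ip{b^\mrt}{\cdot}_b : \scr{H}^\mrt(b) \to \C$ is the Sarason row. A direct computation of the adjoint of this rank-one functional gives $C_a^* \mu = -a(0) \mu\, b^\mrt$, whence $C_a^* C_a = a(0)^2 \ip{b^\mrt}{\cdot}_b\, b^\mrt$ as a rank-one operator on $\scr{H}^\mrt(b)$. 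Setting $(U_c^* U_c)_{11} = I_{\scr{H}^\mrt(b)}$ and rearranging gives exactly the claimed rank-two defect identity.

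There is no real conceptual obstacle here; the lemma is essentially a repackaging of the isometry of the extended colligation as a defect identity for $X$, and the rank-two nature of $I - X^* X$ is transparent from the two rank-one summands on the right (the vacuum evaluation $K_0^b (K_0^b)^*$ already present in the CE case, plus the new Sarason correction $a(0)^2 \ip{b^\mrt}{\cdot}_b b^\mrt$ that accounts for $b$ being non-CE). The only minor bookkeeping to watch is the sign and the conjugation convention in $C_a^* C_a$, both of which conspire to produce the \emph{positive} rank-one term displayed in the statement.
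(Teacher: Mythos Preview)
Your proof is correct and is in fact cleaner than the paper's own argument. The paper proves the identity by a direct kernel--vector computation: it expands $\| X K^b\{Z,y,v\}\|_b^2$ using the explicit formula $X K^b\{Z,y,v\} = K^b\{Z,Z^*y,v\} - \mbf{b}\, v^* b^\mrt(Z)^* y$ from \cite[Proposition 5.6]{JM-freeCE} together with $\mbf{b}^*\mbf{b} = 1 - |b(0)|^2 - a(0)^2$, then compares the result term by term with $\ip{K^b\{Z,y,v\}}{(I - K_0^b(K_0^b)^* - a(0)^2 \ip{b^\mrt}{\cdot}_b b^\mrt) K^b\{Z,y,v\}}_b$ and concludes by polarization. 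Your route instead invokes the isometricity of the extended colligation $U_c$ (already asserted in the paper just before Corollary~\ref{Sarmaxouter}, citing \cite{JM-freeCE}) and reads off the $(1,1)$ block of $U_c^*U_c = I$. Both proofs ultimately rest on results from \cite{JM-freeCE}; yours packages them more efficiently by recognizing the lemma as nothing more than one block of an identity already available, while the paper's computation is self-contained at the level of kernel vectors and makes the rank-two structure visible through explicit expansion rather than by citation.
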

This is an analogue of \cite[Equation (7)]{Sarason-afun}.
\begin{proof}
This is readily established using the formula of \cite[Proposition 5.6]{JM-freeCE}, the fact that $b  ^\mathrm{t} \in \mathscr{H} ^\mathrm{t} (b)$ by \cite[Theorem 6.4]{JM-freeCE} since $b$ is non-CE and that $a(0) ^2 = 1 - |b (0) | ^2 - \| \boldsymbol{b} \| ^2 _{b}$ by definition of the Sarason function. Namely,
\begin{eqnarray} \| X K^b \{ Z, y , v \} \| ^2 _{\mathscr{H} ^\mathrm{t} (b)} & = & \| K^b \{ Z, Z^* y , v \} - \boldsymbol{b} v^* b^\mathrm{t} (Z) ^* y \| ^2 \nonumber \\
& = & y^* K ^b (Z,Z) [vv^*] y - 2 \mathrm{Re} \, \left\langle K^b \{Z,Z^*y , v \}, \boldsymbol{b} v^* b^\mathrm{t} (Z) ^* y \right\rangle + \boldsymbol{b} ^* \boldsymbol{b} \left| v^* b^\mathrm{t} (Z) ^* y \right| ^2 \nonumber \\
& = & \| K^b \{ Z , y ,v \} \| ^2 - y^* \left( vv^* - b ^\mathrm{t} (Z) vv^* b^\mathrm{t} (Z ) ^* \right) y -2 \left| v^* b^\mathrm{t} (Z) ^* y \right| ^2 \nonumber \\
& & + 2 \mathrm{Re} \, b(0) y^*v v^* b^\mathrm{t} (Z) ^* y + \left( 1 - |b(0)| ^2 - a(0) ^2 \right) \left| v^* b^\mathrm{t} (Z) ^* y \right| ^2 \nonumber \\
& = & \| K^b \{Z , y ,v \} \| ^2 - yvv^* y + 2 \mathrm{Re} \, b(0) y^* v v^* b ^\mathrm{t} (Z ) ^* y - \left( |b(0) | ^2 + a(0) ^2 \right) \left| v^* b^\mathrm{t} (Z) ^* y \right| ^2. \nonumber \end{eqnarray} 
On the other hand, applying the formula from the lemma statement,
\begin{eqnarray} & &  \left\langle K^b \{ Z , y ,v \}, \left( I - K_0 ^b (K_0 ^b ) ^* - a(0) ^2 \left\langle b^\mathrm{t}, \cdot \right\rangle_{\mathscr{H} ^\mathrm{t} (b)} b ^\mathrm{t} \right) K^b \{ Z , y ,v \} \right\rangle \nonumber \\
& = & \| K ^b \{ Z , y ,v \} \| ^2 - \left| \left\langle K^b \{ Z , y ,v \}, K_0 ^b \right\rangle \right| ^2 - a(0) ^2 \left| y^* b^\mathrm{t} (Z) v \right| ^2, \nonumber \end{eqnarray} 
yields the same expression. The claim then follows by polarization.
\end{proof}
\begin{proof}{ (of Corollary \ref{CEcolumn})}
If $c$ is inner then $\mathscr{H} ^\mathrm{t} (c) = \mathrm{Ran} \, c(R) ^\perp \subseteq \mathbb{H} ^2 _d \otimes \mathbb{C} ^2$, so that $\mathscr{H} ^\mathrm{t} (c)$ is a closed, co-invariant subspace for $L \otimes I_2$. It is then easily checked that $X^*$ is pure since $L$ is. Conversely if $\| X^{(n)} \boldsymbol{b} \| ^2 _b \rightarrow 0$, then $c$ is inner by Theorem \ref{pureisinner}.

To see that $\| X ^{(n)} \boldsymbol{b} \| ^2 _b = \| X^{(n)} \boldsymbol{c} \| ^2 _c$ for all $n \geq 0$, we apply the rank$-2$ defect condition of Lemma \ref{rank2defect} repeatedly:
For all $n \in \mathbb{N}$,
\begin{eqnarray} \| X^{(n)} \boldsymbol{b} \| ^2 _b & = & \left\langle b^\mathrm{t}, \mathrm{Ad} _{X^* ,X} ^{(n+1)} (I) b^\mathrm{t} \right\rangle_b \nonumber \\
& = & \left\langle b^\mathrm{t}, \mathrm{Ad} _{X^* ,X} ^{(n)} (I) b^\mathrm{t} \right\rangle_b  - \left| \left\langle K_0 ^b, X ^{(n)} b^\mathrm{t} \right\rangle_b \right| ^2 - a(0) ^2  \left| \left\langle b^\mathrm{t}, X^{(n)} b^\mathrm{t} \right\rangle_b \right| ^2 \nonumber \\
& = & \left\langle b^\mathrm{t}, \mathrm{Ad} _{X^* ,X} ^{(n)} (I) b^\mathrm{t} \right\rangle_b - \sum _{|\omega |  = n } |\hat{b} _\omega | ^2 - \sum _{|\omega | = n} |\hat{a} _\omega | ^2 \nonumber \\
... & = & \| X b^\mathrm{t} \| ^2 _b  - \sum _{ 0 < |\omega |  \leq n } |\hat{b} _\omega | ^2 - \sum _{0 <|\omega | \leq n} |\hat{a} _\omega | ^2 \nonumber \\
& = & \| \boldsymbol{c} \| ^2 _c - \sum _{ 0 < |\omega |  \leq n } \hat{c} _\omega ^* \hat{c} _\omega; \quad \quad \hat{c} _\omega \in \mathbb{C} ^2  \nonumber \\
& = & 1 - |c(0) | ^2 - \sum _{ 0 < |\omega |  \leq n } \hat{c} _\omega ^* \hat{c} _\omega  \nonumber \\ 
& = & 1 -  \sum _{ |\omega |  \leq n } \hat{c} _\omega ^* \hat{c} _\omega  = \| X^{(n)} \boldsymbol{c} \| ^2 _c. \nonumber \end{eqnarray}
In the above we used that $\| \boldsymbol{c} \| _c = \| \boldsymbol{b} \| _b$ as proven in Corollary \ref{Sarmaxouter}.
\end{proof}

\section{Free de Branges--Rovnyak space and Smirnov graph analysis}

If $c = \left( \begin{smallmatrix} b  \\ a \end{smallmatrix} \right) \in [ \mathbb{H} ^\infty _d \otimes \mathbb{C} ^2 ]_1$ is inner and $a$ is outer, we will say that $(a,b)$ is a \emph{Smirnov column--inner pair}. This terminology is motivated by the results of \cite{JM-freeSmirnov}. Namely, the \emph{left free Smirnov class}, $\mathscr{N} _d ^+$, is the set of all free NC functions, $h \in \mathscr{O} (\mathbb{B} ^d _\mathbb{N} )$, defined as `fractions' of bounded left multipliers with outer denominators. That is, $h \in \mathscr{N} _d ^+$ if there are $a,b \in  \mathbb{H} ^\infty _d  $ with $a$ outer so that $h (Z) = b(Z) a(Z) ^{-1}$. In \cite{JM-freeSmirnov} we showed that a closed and densely--defined linear operator $A \in \mathscr{L} (\mathbb{H} ^2 _d )$ is \emph{affiliated to $\mathscr{L} ^\infty _d$} in the sense that $\mathrm{Dom} \, A$ is $R-$invariant and 
$ R_k A x = A R_k x$ for all $x \in \mathrm{Dom} \, A$ if and only if $A = M^L _h = h(L)$ is an unbounded, closed and densely--defined left multiplier with symbol $h \in \mathscr{N} _d ^+$. Here, recall that the commutant of $\mathscr{L} ^\infty _d$ is $\mathscr{R} ^\infty _d$, \cite[Theorem 1.2]{DP-inv}. Moreover, the graph, $\mathrm{Gr} \, h(L) \subseteq \mathbb{H} ^2 _d \otimes \mathbb{C} ^2$, is then $R \otimes I_2-$invariant and 
$$ \mathrm{Gr} \, h(L) = \mathrm{Ran} \, c(L), \quad c(L) : \mathbb{H} ^2 _d \rightarrow \mathbb{H} ^2 _d \otimes \mathbb{C} ^2 \quad c(L) =: \begin{pNiceMatrix} a(L) \\ b(L) \end{pNiceMatrix}, $$ where $c(L)$ is inner so that $\mathrm{Ran} \, a(L) = \mathrm{Dom} \, h(L)$, $a,b \in [\mathbb{H} ^\infty _d ] _1$ and $a(L)$ is outer since $h(L)$ is densely--defined \cite[Corollary 4.26, Corollary 4.27]{JM-freeSmirnov}. Hence $(a,b)$ is a Smirnov column--inner pair and $h(L) = b(L) a(L) ^{-1}$. Smirnov column--inner pairs which represent a given $h \in \mathscr{N} _d ^+$ are unique \cite[Corollary 5.2]{JM-freeSmirnov}. Equivalently, $h \in \mathscr{N} ^+ _d$ if and only if $h(R) := M^R _{h ^\mathrm{t}}$, $h(R) = b(R) a(R) ^{-1}$, is a densely--defined and closed operator affiliated to $\mathscr{R} ^\infty _d$ and we write $h(R) \sim \mathscr{R} ^\infty _d$ in this case. 

Assume that $b \in [ \mathbb{H} ^\infty _d ] _1$ is non-CE and let $a$ be its unique Sarason outer function so that the column $c := \left( \begin{smallmatrix} a \\ b \end{smallmatrix} \right)$ is CE. Consider the NC inner--outer factorization of $c$, $c = C \cdot D$ where $C$ is inner and $D$ is outer. Here, 
$$ \mathrm{Ran} \, C (R) = \mathrm{Ran} \, c(R) ^{- \| \cdot \| _{\mathbb{H} ^2}}, $$ so that $\mathrm{Ran} \, C(R)$ is a closed, $L\otimes I_2-$invariant subspace of $\mathbb{H} ^2 _d \otimes \mathbb{C} ^2$ and it has the $L\otimes I_2-$cyclic vector $x := \left( \begin{smallmatrix} b^\mathrm{t} \\ a ^\mathrm{t} \end{smallmatrix} \right) = c ^\mathrm{t}$. Since $\mathrm{Ran} \, C(R)$ has a cyclic vector, it follows that the wandering space of this $L\otimes I_2-$invariant subspace is also one--dimensional so that 
$$ C (R) = \left( \begin{smallmatrix} A(R) \\ B(R) \end{smallmatrix} \right), $$ for some Smirnov column--inner pair $A, B \in [ \mathbb{H} ^\infty _d ] _1$. Hence $D(R) \in [ \mathbb{H} ^\infty _d ] _1$ is outer and since $a(R) = A(R) D(R)$ is outer, $A(R)$ must also be outer. Moreover, since $C(R)$ is inner, 
$$ I - B(R) ^* B(R) = A(R) ^* A(R), $$ so that if we set $A(0) >0$, $A$ is the Sarason outer function of $B$ by Corollary \ref{Sarmaxouter}. The outer multiplier $D$ must be CE:
\begin{eqnarray} I -b(R) ^* b(R) &= & I - D(R) ^* B(R) ^* B(R) D(R) \nonumber \\
& = & I - D(R) ^* D(R) + D(R) ^* A(R) ^* A(R) D(R) \nonumber \\
& = & I - D(R) ^* D(R) + a(R) ^* a(R). \nonumber \end{eqnarray} Since $a(R)$ is the maximal outer factor of $I - b(R) ^* b(R)$, $I- D(R) ^* D(R)$ cannot have any non-trivial factorable minorant, \emph{i.e.} $D$ is column--extreme. If we set $H(R) := B(R) A(R) ^{-1}$, then this is a closed and densely--defined right Smirnov multiplier of the Fock space, with $\mathrm{Gr} \, H(R)  = \mathrm{Ran} \, C(R)$ \cite{JM-freeSmirnov}. Since $C$ is inner, basic facts about operator--range spaces and complementary spaces imply that 
$$ \mathscr{H} ^\mathrm{t} (c) = \mathscr{H} ^\mathrm{t} (C) \oplus C(R) \mathscr{H} ^\mathrm{t} (D), $$ and that $C(R)$ is an isometric right multiplier of $\mathscr{H} ^\mathrm{t} (D)$ into $\mathscr{H} ^\mathrm{t} (c)$ \cite[Theorem 18.8]{FM2}.
\begin{thm} \label{orthodecomp}
Given $b = BD$ as above, 
$$ \mathscr{H} ^\mathrm{t} (b) = \mathscr{H} ^\mathrm{t} (B) \oplus B(R) \mathscr{H} ^\mathrm{t} (D), $$ and $B(R)$ is an isometric right multiplier of $\mathscr{H} ^\mathrm{t} (D)$ into $\mathscr{H} ^\mathrm{t} (b)$. If we define $E _1 : \mathscr{H} ^\mathrm{t} (c) \rightarrow \mathscr{H} ^\mathrm{t} (b)$ by projection onto the first co-ordinate, $E_1 \left( \begin{smallmatrix} f \\ g \end{smallmatrix} \right) = f$, then $E_1$ is an onto isometry. 
\end{thm}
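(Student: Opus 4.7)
The plan is to obtain all three claims of the theorem --- the orthogonal decomposition of $\scr{H}^\mrt(b)$, the isometric multiplier property of $B(R)$, and the onto isometry of $E_1$ --- simultaneously, by transporting the already-established decomposition $\scr{H}^\mrt(c) = \scr{H}^\mrt(C) \oplus C(R) \scr{H}^\mrt(D)$ through the coordinate projection $E_1$. Everything follows once $E_1$ is known to be an isometry that carries the two summands on the $c$-side to the two claimed summands on the $b$-side.

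First I would verify that $E_1$ restricts to a contraction from $\scr{H}^\mrt(c)$ into $\scr{H}^\mrt(b)$. A direct block computation against the defect operator yields the identity
\[
E_1 \bigl( I - c(R) c(R)^* \bigr) E_1^* \; = \; I - b(R) b(R)^*,
\]
and Douglas' factorization lemma then guarantees that $E_1$ extends to a contraction between the associated operator-range spaces.

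Next I would trace the action of $E_1$ on each summand of the decomposition of $\scr{H}^\mrt(c)$. For $h \in \scr{H}^\mrt(D)$, the identity $C(R) h = \bsm B(R) h \\ A(R) h \esm$ immediately gives $E_1(C(R) h) = B(R) h$, so $E_1(C(R) \scr{H}^\mrt(D)) = B(R) \scr{H}^\mrt(D)$. Since $C(R)$ is an isometric right multiplier from $\scr{H}^\mrt(D)$ into $\scr{H}^\mrt(c)$, one has $\| B(R) h \|_b \leq \| C(R) h \|_c = \| h \|_D$ from contractivity of $E_1$; the reverse inequality, and hence that $B(R)$ is isometric from $\scr{H}^\mrt(D)$ into $\scr{H}^\mrt(b)$, would be obtained by saturating Douglas' lemma using the Sarason relation $I - B(R)^* B(R) = A(R)^* A(R)$ with $A$ outer, which forces the factorization $I - b(R) b(R)^* = (I - B(R) B(R)^*) + B(R)(I - D(R) D(R)^*) B(R)^*$ to be tight. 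For the other summand, since $C$ is inner, $\scr{H}^\mrt(C) = \ker C(R)^*$ carries the ambient $\hardy \otimes \C^2$ norm, and I would identify $E_1(\scr{H}^\mrt(C))$ with $\scr{H}^\mrt(B)$ isometrically by using the defining relation $B(R)^* F_1 + A(R)^* F_2 = 0$ for $\scr{H}^\mrt(C)$ together with the outerness of $A$ (which makes $A(R)^*$ injective, so $F_2$ is uniquely determined by $F_1$).

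Orthogonality of $\scr{H}^\mrt(B)$ and $B(R) \scr{H}^\mrt(D)$ in $\scr{H}^\mrt(b)$ then transfers from the orthogonal decomposition of $\scr{H}^\mrt(c)$ along the isometry $E_1$, and surjectivity of $E_1$ is automatic because its image contains both summands, whose span exhausts $\scr{H}^\mrt(b)$. The main obstacle I anticipate is precisely the isometry upgrade: showing that $B(R)$ acts isometrically on $\scr{H}^\mrt(D)$ even though $B$ is not inner, and that the sum in the defect formula for $I - b(R) b(R)^*$ is actually direct rather than merely overlapping. This is where the CE property of $c$ and the maximality of $A$ as the Sarason outer function of $B$ (Corollary \ref{Sarmaxouter}) enter essentially, since CE rules out the redundancy between $\scr{M}(\sqrt{I - B(R) B(R)^*})$ and $B(R) \scr{M}(\sqrt{I - D(R) D(R)^*})$ that would otherwise obstruct both the isometry and the orthogonality.
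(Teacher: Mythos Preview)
Your overall plan---transport the already-known decomposition $\scr{H}^\mrt(c)=\scr{H}^\mrt(C)\oplus C(R)\scr{H}^\mrt(D)$ through the first-coordinate map $E_1$---is a reasonable alternative to the paper's route, and several of your intermediate steps are correct: the block identity $E_1(I-c(R)c(R)^*)E_1^*=I-b(R)b(R)^*$ does give contractivity of $E_1:\scr{H}^\mrt(c)\to\scr{H}^\mrt(b)$, and for $(F_1,F_2)\in\scr{H}^\mrt(C)=\ker C(R)^*$ the outerness of $A$ yields $F_2=-H(R)^*F_1$, so $\|(F_1,F_2)\|_{\scr{H}^\mrt(C)}^2=\|F_1\|^2+\|H(R)^*F_1\|^2=\|F_1\|_{\scr{H}^\mrt(B)}^2$ by the graph-norm identification of $\scr{H}^\mrt(B)$.

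The gap is exactly where you flag it: you never prove that $E_1$ is an isometry \emph{into} $\scr{H}^\mrt(b)$, only that it is a contraction, and your proposed upgrade is not a proof. The computation above shows $E_1|_{\scr{H}^\mrt(C)}$ is isometric for the $\scr{H}^\mrt(C)$ and $\scr{H}^\mrt(B)$ norms, but you still need $\scr{H}^\mrt(B)\hookrightarrow\scr{H}^\mrt(b)$ isometrically---and that is the entire content of the theorem on that summand. Your ``saturating Douglas'' remark amounts to asserting that the sum $(I-B(R)B(R)^*)+B(R)(I-D(R)D(R)^*)B(R)^*$ has trivially overlapping range spaces, i.e.\ that $\scr{H}^\mrt(B)\cap B(R)\scr{H}^\mrt(D)=\{0\}$; but that is equivalent to $\scr{H}^\mrt(B^*)\cap\scr{H}^\mrt(D)=\{0\}$, which the paper obtains only \emph{after} the theorem (Corollary~\ref{AstarD}). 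Invoking CE of $D$ or maximality of $A$ at this stage is circular as written.

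What the paper does instead is supply the missing inequality directly. Using Lemma~\ref{dBRnorm} (and Proposition~\ref{afunconverge} to justify the limit $a_r\to a$), one has for every NC Szeg\"o kernel
\[
\|K\{Z,y,v\}\|_b^2=\|K\{Z,y,v\}\|_{\bH^2}^2+\|H(R)^*K\{Z,y,v\}\|_{\bH^2}^2,
\]
where $H(R)=b(R)a(R)^{-1}$. The crucial observation---which your sketch never uses---is that $b/a$ and $B/A$ define the \emph{same} Smirnov multiplier $H$, since $b=BD$ and $a=AD$ with $D$ cancelling. Hence the right-hand side equals $\|K\{Z,y,v\}\|_B^2$, giving $\|\cdot\|_b=\|\cdot\|_B$ on the kernel span, which is dense in $\scr{H}^\mrt(B)$. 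Once this isometric inclusion is in hand, the paper finishes with \cite[Theorem~16.23]{FM2} and a short scaling argument to force $\langle x,B(R)y\rangle_b=0$; your $E_1$-transport would work equally well from that point, but not before.
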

\begin{prop} \label{afunconverge}
Let $b_n \in [\mathbb{H} ^\infty _d ]_1$ be a sequence (or net) of contractive non-CE multipliers which converge $SOT-*$ to a contractive, non-CE $b \in [\mathbb{H} ^\infty _d ]_1$. Then the corresponding Sarason outer multipliers $a_n (R)$ converge to $a(R)$ in the weak operator topology. If $c := \left( \begin{smallmatrix} b \\ a \end{smallmatrix} \right)$ is inner then $a_n (R)$ converges to $a(R)$ SOT$-*$.
\end{prop}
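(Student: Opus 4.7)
The plan is a WOT-compactness and uniqueness-of-limits argument: extract WOT-convergent subnets from the uniformly bounded family $\{a_n(R)\}$, identify each such limit with $a(R)$, and then upgrade to SOT-$*$ convergence when $c := \bsm b \\ a \esm$ is inner.

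First, the SOT-$*$ convergence $b_n \to b$ together with $\|b_n(R)\| \leq 1$ yields $b_n(R)^* b_n(R) \to b(R)^* b(R)$ in SOT, hence $T_n := I - b_n(R)^* b_n(R) \to T := I - b(R)^* b(R)$ in SOT. Since $\|a_n(R)\| \leq 1$ and $\mult$ is WOT-closed in the unit ball of $\scr{L}(\hardy)$, any subnet of $\{a_n(R)\}$ admits a WOT-convergent sub-subnet, which I denote $a_{n_k}(R) \to \tilde a(R)$ with $\tilde a \in [\mult]_1$. Weak lower semicontinuity of norms gives $\|\tilde a(R) h\|^2 \leq \liminf_k \|a_{n_k}(R) h\|^2 \leq \liminf_k \langle h, T_{n_k} h\rangle = \langle h, T h \rangle$ for every $h \in \hardy$, so $\tilde a(R)^* \tilde a(R) \leq T$. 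Writing the inner--outer factorization $\tilde a = \Theta g$ with $g \in [\mult]_1$ outer, and invoking the maximality of $a(R)^* a(R)$ as factorizable minorant of $T$ (Corollary \ref{Sarmaxouter}), I obtain $g(R)^* g(R) \leq a(R)^* a(R)$, so by Douglas factorization and outerness of $a(R)$, $g = c\, a$ for some outer contractive $c \in [\mult]_1$. Thus $\tilde a = \Theta c a$ with $|\Theta(0) c(0)| \leq 1$.

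To finish the WOT claim, I would show $\lim_n a_n(0) = a(0)$. The upper bound $\limsup_n a_n(0)^2 \leq a(0)^2$ follows from the Popescu variational formula $a_n(0)^2 = \inf_{p \in \fp _0}\langle 1-p, T_n(1-p)\rangle$ (from the proof of Corollary \ref{Sarmaxouter}): given $\epsilon > 0$, fix $p_\epsilon \in \fp _0$ with $\langle 1-p_\epsilon, T(1-p_\epsilon)\rangle < a(0)^2 + \epsilon$ and use $T_n \to T$ in SOT. The central obstacle is the matching lower bound $\liminf_n a_n(0) \geq a(0)$, which I plan to attack by combining the column--extremality of each $c_n := \bsm b_n \\ a_n\esm$ (Proposition \ref{rowuniCE}) with the identity $\|\mbf{c}_n\|_{c_n}^2 = \|\mbf{b}_n\|_{b_n}^2 = 1 - |b_n(0)|^2 - a_n(0)^2$ from Corollary \ref{Sarmaxouter}, transferring this identity to a WOT-cluster point $\bsm b \\ \tilde a\esm$ of $\{c_n(R)\}$ to force $\tilde a(0) \geq a(0)$. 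Once $\tilde a(0) = a(0) = \Theta(0) c(0) a(0)$ with $|\Theta(0) c(0)| \leq 1$, equality forces $|\Theta(0)| = |c(0)| = 1$; an inner $\Theta$ with $|\Theta(0)| = 1$ is a unimodular constant, and an outer contractive $c$ with $|c(0)| = 1$ satisfies $\|c^\mrt\|_\hardy^2 \leq 1 = |c(0)|^2$, forcing $c$ constant. Positivity $a_n(0) > 0$ then pins $\Theta \equiv c \equiv 1$, yielding $\tilde a = a$; uniqueness of subnet limits gives $a_n(R) \to a(R)$ in WOT.

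For the SOT-$*$ conclusion under the hypothesis that $c$ is inner, one has $a(R)^* a(R) = T$ exactly. The bound $\|a_n(R) h\|^2 \leq \langle h, T_n h\rangle \to \langle h, T h\rangle = \|a(R) h\|^2$ gives $\limsup_n \|a_n(R) h\| \leq \|a(R) h\|$; combined with the WOT convergence $a_n(R) h \rightharpoonup a(R) h$ from the first part, the Hilbert-space identity $\|x_n - x\|^2 = \|x_n\|^2 - 2\,\mathrm{Re}\,\langle x_n, x\rangle + \|x\|^2$ forces $a_n(R) h \to a(R) h$ in norm, i.e.\ SOT convergence. Adjoint SOT convergence $a_n(R)^* h \to a(R)^* h$ follows from the established Taylor coefficient convergence $\widehat{a_n}_\omega \to \hat a_\omega$ via the observation that $a_n(R)^* \mf{z}^\omega$ is a finite linear combination of monomials indexed by the prefixes of $\omega$ with coefficients $\overline{\widehat{a_n}_{\gamma^\mrt}}$; coefficient convergence thus gives norm convergence on monomials, extending to all of $\hardy$ by density and uniform boundedness.
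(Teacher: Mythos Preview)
Your overall architecture---WOT compactness of the unit ball, identify any subnet limit $\tilde a$ with $a$ via inner--outer factorization and maximality of the Sarason function---matches the paper's proof closely. The gap is exactly where you flag it: the lower bound $\liminf_n a_n(0)\geq a(0)$. Your ``plan to attack'' this by ``transferring'' the identity $\|\mbf{b}_n\|_{b_n}^2 = 1-|b_n(0)|^2-a_n(0)^2$ to a WOT cluster point is not an argument; these norms live in the varying spaces $\scr{H}^\mrt(b_n)$, and nothing you have said explains how WOT convergence of $c_n(R)$ controls them. Without this step you cannot conclude $\tilde a(0)=a(0)$, and the subsequent rigidity argument for $\Theta$ and $c$ collapses.

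The paper sidesteps your variational upper/lower bound dichotomy entirely: it invokes the defining formula $a_n(0)^2 = 1-|b_n(0)|^2-\mbf{b}_n^*\mbf{b}_n$ directly and asserts that SOT$-*$ convergence of $b_n(R)$ forces $a_n(0)\to a(0)$. Once $\tilde a(0)=a(0)>0$ is in hand, the paper's endgame is essentially the same as yours (factor $\tilde a=\Theta\hat a$, use maximality of $a$ to write $\hat a = Ca$, then $C(0)\Theta(0)=1$ forces $C\Theta\equiv 1$ by the NC maximum modulus principle, hence $\Theta\equiv C\equiv 1$). So the missing ingredient in your write-up is precisely the convergence $a_n(0)\to a(0)$, which you should establish directly rather than through the entropy infimum.

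Your treatment of the SOT$-*$ upgrade when $c=\bsm b\\a\esm$ is inner is correct and is genuinely different from the paper's. The paper goes through strong resolvent convergence of $\tau_n:=h_n(R)^*h_n(R)$ with $h_n=b_na_n^{-1}$, computing $(\tau_n+I)^{-1}=a_n(R)a_n(R)^*$ and passing to the limit. Your route---$\|a_n(R)h\|^2\leq\ip{h}{T_nh}\to\ip{h}{Th}=\|a(R)h\|^2$ (using $a(R)^*a(R)=T$ from innerness of $c$), then weak convergence plus norm-limsup bound gives norm convergence---is shorter and avoids the unbounded-operator machinery. Your argument for SOT convergence of $a_n(R)^*$ via Taylor coefficients on monomials is also fine; the paper simply cites the general fact that WOT convergence of a bounded net of multipliers implies SOT convergence of their adjoints.
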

This lemma applies, for example, to the free polynomial sequence $b_n$ of $n$th Ces\`aro sums of $b$, or to the nets $b(rR)$ or $r b(R)$ for $0 < r <1$, taking the limit as $r \uparrow 1$. 
\begin{proof}
Recall that 
$$ a _n (0) ^2 = 1 - |b_n (0) | ^2 - \boldsymbol{b} _n ^* \boldsymbol{b_n}, $$ $a_n (0) > 0$ and $\boldsymbol{b} _n = L^* b_n ^\mathrm{t}$. Since $b$ is non-CE, its Sarason function, $a$, also obeys $a(0) >0$ and $a(0) ^2 = 1 - |b(0) | ^2 - \boldsymbol{b} ^* \boldsymbol{b} >0$. Since $b_n (R) \stackrel{SOT-*}{\rightarrow} b(R)$, it follows that $a_n (0) \rightarrow a(0) >0$.

Since every $a_n$ is a contractive multiplier, we can assume that (by possibly passing to a subsequence) $a_n (R) \stackrel{WOT}{\rightarrow} \widetilde{a} (R)$, where $\widetilde{a} \in [ \mathbb{H} ^\infty _d ] _1$ and so that $$ \widetilde{a} (0) = a(0) >0.$$ Hence, 
$$ c_n (R) = \begin{pNiceMatrix} b_n (R) \\ a_n (R) \end{pNiceMatrix} \stackrel{WOT}{\rightarrow} \begin{pNiceMatrix} b(R) \\ \widetilde{a} (R) \end{pNiceMatrix}, $$ so that $\left( \begin{smallmatrix} b \\ \widetilde{a} \end{smallmatrix} \right) \in [ \mathbb{H} ^\infty _d \otimes \mathbb{C} ^2 ] _1$. Since $\widetilde{a} (0) = a(0) >0$, $\widetilde{a} \neq 0$ and since $\widetilde{a}$ is not CE, it has an inner--outer factorization $\widetilde{a} = \Theta \hat{a}$ where $\Theta (R)$ is inner and $\hat{a} (R)$ is outer. We can and do further assume that $\hat{a} (0) >0$. Now observe that 
$$ \begin{pNiceMatrix} b (R) \\ \hat{a} (R) \end{pNiceMatrix} = \begin{pNiceMatrix} I & 0 \\ 0 & \Theta (R) ^* \end{pNiceMatrix} \begin{pNiceMatrix} b(R) \\ \widetilde{a} (R) \end{pNiceMatrix} \in [ \mathbb{H} ^\infty _d \otimes \mathbb{C} ^2 ] _1, $$ so that it follows that $\left( \begin{smallmatrix} b \\ \hat{a} \end{smallmatrix} \right)$ is a contractive multiplier, where now $\hat{a}$ is outer. However, the Sarason function, $a(R)$, is the maximal outer factor of $I-b(R) ^* b(R)$ by Corollary \ref{Sarmaxouter}. This means that since $\hat{a} (R) ^* \hat{a} (R) \leq I - b(R) ^* b(R)$, it must be that 
$$ \hat{a} (R) ^* \hat{a} (R) \leq a(R) ^* a(R). $$ By Douglas factorization, there is a contraction, $C ^*$ so that 
$$ \hat{a} (R) ^* = a(R) ^* C^* \quad \Rightarrow \quad \hat{a} (R) = C a(R). $$ Since both $\hat{a} , a$ are outer, it follows that $C$ commutes with the left shifts, so that $C = C(R) \in [ \mathscr{R} ^\infty _d ] _1$ is a contractive right multiplier \cite[Theorem 1.2]{DP-inv}. This would imply that 
$$ \hat{a} (0) = C(0) a(0) = C(0) \widetilde{a} (0) = C(0) \Theta (0) \hat{a} (0), $$ so that 
$$ C(0) \Theta (0) =1. $$ However, $C(R) \Theta (R)$ is a contractive right multiplier, so that the NC maximum modulus principle implies that $C(R) \Theta (R) \equiv I$ \cite[Lemma 6.11]{SSS}. Since $\Theta (R)$ is inner, it is not invertible unless $\Theta (R) \equiv I$. It follows that $\Theta (R) = I = C(R)$ and we conclude that $\hat{a} (R) = \widetilde{a} (R) = a(R)$. We can conclude that the entire sequence $a_n (R) \stackrel{WOT}{\rightarrow} a(R)$, since any $WOT$-convergent subsequence must 
converge to $a(R)$ by the above argument.

Now suppose that $c = \left( \begin{smallmatrix} b \\ a \end{smallmatrix} \right)$ is inner. Let $h_n (R) := b_n (R) a_n (R) ^{-1}$ and $\tau _n := h_n (R) ^* h_n (R)$. Then for any $z \in \mathbb{C} \setminus [ 0 , +\infty )$,
\begin{eqnarray} ( \tau _n - z I ) ^{-1} & = & \left( a_n (R) ^{-*} ( b_n (R) ^* b_n (R) - z  a_n (R) ^* a_n (R) ) a_n (R) ^{-1} \right) ^{-1} \nonumber \\
& = & a_n (R) \left( (1+z) b_n (R) ^* b_n (R) -zI \right) ^{-1} a_n (R) ^*. \nonumber \end{eqnarray} Since $a_n (R) \stackrel{WOT}{\rightarrow} a(R)$ we have that $a_n (R) ^* \stackrel{SOT}{\rightarrow} a(R)^*$ and $b_n (R) \stackrel{SOT-*}{\rightarrow} b(R)$ by \cite[Lemma 9]{JM-ncFatou}, so that the above converges in the weak operator topology to 
\begin{eqnarray} a(R) \left( (1+z) b(R) ^* b(R) -zI \right) ^{-1} a(R) ^* & = & a(R) \left( (1+z) b(R) ^* b(R) - z b(R) ^* b(R) - z a(R) ^* a(R) \right) ^{-1} a(R) ^* \nonumber \\
& = & (\tau - zI ) ^{-1} \nonumber \end{eqnarray} where $\tau = h(R) ^* h(R)$ and $h(R) = b(R) a(R) ^{-1}$. In the above we used the assumption that $a(R) ^* a(R) + b(R) ^* b(R) =I$. Hence $\tau _n$ converges to $\tau$ in the weak resolvent sense \cite{RnS1}. A simple argument using the resolvent formula then shows that $(\tau _n - zI ) ^{-1}$ converges in the strong operator topology to $(\tau -zI) ^{-1}$ for any $z \in \mathbb{C} \setminus [ 0 , + \infty )$. In particular, taking $z=-1$ gives
$$ (\tau + I ) ^{-1} = a(R) \left(  (1+z) b(R) ^* b(R) -zI \right) ^{-1} a(R) ^* = a(R) a(R) ^*, $$ so that $a_n (R) a_n (R) ^* \stackrel{SOT}{\rightarrow} a(R) a(R) ^*$ and it follows easily from this that $a_n (R) \stackrel{SOT}{\rightarrow} a(R)$. Recall that for any net of right (or left) multipliers, $WOT$ convergence implies $SOT$ convergence of $a_n (R) ^*$, hence $a_n (R) \stackrel{SOT-*}{\rightarrow} a(R)$ by \cite[Lemma 9]{JM-ncFatou}.
\end{proof}

Consider the (generally) unbounded multiplier $H(R) := b(R) a(R) ^{-1}$ with dense domain $\mathrm{Dom} \, H(R) := \mathrm{Ran} \, a(R)$. By \cite[Corollary 3.9]{JM-freeSmirnov}, $H(R)$ is closeable and both the linear span of the NC Szeg\"o kernel vectors $K\{Z , y ,v \}$ and the free polynomials are cores for $H(R) ^*$ \cite[Corollary 3.13]{JM-freeSmirnov}. Here, recall that a dense linear subspace, $\mathscr{D} \subseteq \mathrm{Dom} \, T$ in the domain of a closed, densely--defined linear operator $T$ is called a \emph{core} for $T$ if $\mathrm{Gr} \, T = \{ x \oplus Tx \ | \, x \in \mathscr{D} \} ^{-\| \cdot \| }$. That is, $T$ is the closure of its restriction to $\mathscr{D}$.

Given any $b \in [\mathbb{H} ^\infty _d ] _1$, for each $0\leq r < 1$, let $a_r$ be the (unique, invertible) outer multiplier defined by
  \[
    I-r^2b(R)^*b(R) =a_r (R) ^* a_r (R).
  \]

\begin{lemma} \label{dBRnorm}
Let $b \in [\mathbb{H} ^\infty _d ] _1$ be a contractive multiplier.  Then $f\in \mathscr{H} ^\mathrm{t} (b) $ if and only if
  \[
    \sup_{0<r<1} \|a_r^{-*} (R) b(R) ^* f \|_{\mathbb{H} ^2} <\infty,
  \]
  in which case
\begin{eqnarray}  \|f\|_{\mathscr{H} ^\mathrm{t} (b) } ^2 & = & \|f \|_{\mathbb{H} ^2}^2 +\lim_{r\uparrow 1} \|a_r^{-*} (R) b(R) ^* f \|_{\mathbb{H} ^2} \nonumber \\
& = & \| f \| ^2 _{\mathbb{H} ^2} + \| H(R) ^* f \| ^2 _{\mathbb{H} ^2} = \| f \| ^2 _{\mathrm{Gr} (H(R) ^*)}. \nonumber \end{eqnarray} 
\end{lemma}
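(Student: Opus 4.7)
The substantive content is the first equality, which holds for arbitrary $b \in [\mult]_1$; the subsequent identifications with the graph norm of $H(R)^*$ presuppose that $b$ is non-CE so that the Sarason function $a$ is available. My plan is to derive an explicit operator identity relating the resolvent $(I - r^2 b(R) b(R)^*)^{-1}$ to $a_r(R)^{-*} b(R)^*$, then combine it with the standard spectral characterization of complementary operator-range spaces.

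For the first equality, since $I - r^2 b(R)^* b(R) \geq (1 - r^2) I > 0$ is bounded below, Theorem \ref{Pop1} furnishes the invertible outer multiplier $a_r \in [\mult]_1$ with $a_r(R)^* a_r(R) = I - r^2 b(R)^* b(R)$. The tautological intertwining $b(R)(I - r^2 b(R)^* b(R)) = (I - r^2 b(R) b(R)^*) b(R)$ inverts to $(I - r^2 b(R) b(R)^*)^{-1} b(R) = b(R) a_r(R)^{-1} a_r(R)^{-*}$; multiplying on the right by $r^2 b(R)^*$ and using the tautology $(I - r^2 T)^{-1} - I = r^2 (I - r^2 T)^{-1} T$ applied at $T = b(R) b(R)^*$ yields the operator identity
\begin{equation*}
(I - r^2 b(R) b(R)^*)^{-1} = I + r^2 b(R) a_r(R)^{-1} a_r(R)^{-*} b(R)^*,
\end{equation*}
and hence in quadratic form
\begin{equation*}
\langle (I - r^2 b(R) b(R)^*)^{-1} f, f\rangle_{\bH^2} = \|f\|_{\bH^2}^2 + r^2 \|a_r(R)^{-*} b(R)^* f\|_{\bH^2}^2.
\end{equation*}
I then invoke the general Hilbert-space principle: for any positive contraction $T$, one has $g \in \scr{M}(\sqrt{I - T})$ if and only if $\sup_{0<r<1}\langle (I - r^2 T)^{-1} g, g\rangle < \infty$, in which case the supremum equals the monotone limit as $r \uparrow 1$ and equals $\|g\|^2_{\scr{M}(\sqrt{I - T})}$. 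This is a straightforward functional-calculus argument: $\phi_r(t) := (1 - t)(1 - r^2 t)^{-1}$ increases monotonically on $[0, 1]$ to $\chi_{[0, 1)}(t)$, and monotone convergence identifies the limit of $\langle \phi_r(T) g, g\rangle$ as $\|P_{\ker(\sqrt{I-T})^\perp} g\|^2$, which coincides with $\|g\|^2_{\scr{M}(\sqrt{I-T})}$ precisely when the latter is finite. Applying this with $T = b(R) b(R)^*$, so that $\scr{M}(\sqrt{I - T}) = \scr{H}^\mrt(b)$, and combining with the operator identity above yields the characterization and the first displayed equality of the lemma.

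For the identification with the graph norm of $H(R)^*$, assume $b$ is non-CE with Sarason outer function $a$. A direct computation gives $\dom H(R)^* = \{f : b(R)^* f \in \ran a(R)^*\}$ with $H(R)^* f = a(R)^{-*} b(R)^* f$. Note that $a_r$ is itself the Sarason outer function of the (non-CE) contractive multiplier $rb$, so Proposition \ref{afunconverge} applied to the net $rb \to b$ (trivially SOT-$*$ convergent) gives $a_r(R) \to a(R)$ in WOT. When the column $c = [b; a]$ is inner, so that $a(R)^* a(R) = I - b(R)^* b(R)$, the identity $\|a_r(R)^{-*} b(R)^* f\|^2 = \langle b(R)^* f, (I - r^2 b(R)^* b(R))^{-1} b(R)^* f\rangle$ passes to the limit monotonically to $\langle b(R)^* f, (a(R)^* a(R))^{-1} b(R)^* f\rangle = \|a(R)^{-*} b(R)^* f\|^2 = \|H(R)^* f\|^2$. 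The final equality is the definition of the graph norm.

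The main obstacle is the passage $a_r \to a$ and the associated convergence of $\|a_r(R)^{-*} b(R)^* f\|^2$; the argument is clean when $c$ is inner (the setting in which $\scr{H}^\mrt(b) = \dom H(R)^*$ isometrically via the graph norm, per condition (v) of Theorem \ref{main1}), and would require a defect analysis of $I - b(R)^* b(R) - a(R)^* a(R) \geq 0$ in the general case.
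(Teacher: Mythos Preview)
Your argument is essentially the unpacking of the paper's one-line citation of \cite[Theorems 16.17, 16.18]{FM2}; the operator identity $(I-r^2 b(R)b(R)^*)^{-1} = I + r^2 b(R) a_r(R)^{-1} a_r(R)^{-*} b(R)^*$ you derive even appears verbatim in a commented-out block of the paper just above the lemma. So the route is the same, only more explicit.

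One slip worth flagging: your functional-calculus justification is off. The function $\phi_r(t) = (1-t)(1-r^2 t)^{-1}$ does increase to $\chi_{[0,1)}$, but then $\langle \phi_r(T) g, g\rangle \to \|P_{\ker(I-T)^\perp} g\|^2 \leq \|g\|^2$, which is not $\|g\|^2_{\scr{M}(\sqrt{I-T})}$. The correct monotone family is $\psi_r(t) := (1-r^2 t)^{-1}$, increasing to $(1-t)^{-1}$ on $[0,1)$ and to $+\infty$ at $t=1$; monotone convergence then gives $\langle (I-r^2 T)^{-1} g,g\rangle \uparrow \int (1-t)^{-1}\,d\mu_g(t)$, which equals $\|g\|^2_{\scr{M}(\sqrt{I-T})}$ precisely when finite. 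The stated principle and the conclusion survive unchanged.

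Your closing caveat is more careful than the paper's statement. The identity $\|f\|_b^2 = \|f\|^2_{\bH^2} + \|H(R)^* f\|^2_{\bH^2}$ holding for \emph{every} $f \in \scr{H}^\mrt(b)$ is exactly condition (v) of Theorem~\ref{main1}, hence equivalent to $c = \bsm b \\ a \esm$ being inner, which is open for $d>1$. In the paper's subsequent use (proof of Theorem~\ref{orthodecomp}) the lemma is applied only to NC Szeg\"o kernels, which lie in $\nbdom H(R)^*$, so no harm is done there; but you are right that the second displayed equality, as written for arbitrary $f \in \scr{H}^\mrt(b)$, tacitly assumes this.
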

\begin{proof}
This follows from basic facts on operator--range spaces and complementary spaces, see \cite[Theorem 16.17, Theorem 16.18]{FM2}.
\end{proof}

The following can be pieced together from results in \cite{FM2}, in particular see \cite[Theorem 18.8]{FM2}.

\begin{thm*} \label{dBRdecomp}
Let $a=bc$ where $a,b,c \in [ \mathbb{H} ^\infty _d ]_1$. Then, $\mathscr{H} ^\mathrm{t} (b^*) \cap \mathscr{H} ^\mathrm{t} (c) = \{ 0 \}$ if and only if 
$\mathscr{H} ^\mathrm{t} (b) \cap b (R)  \mathscr{H} ^\mathrm{t} (c) = \{ 0 \}$ and in this case:
\begin{enumerate}
    \item $\mathscr{H} ^\mathrm{t} (a) = \mathscr{H} ^\mathrm{t}  (b) \oplus b (R) \mathscr{H} ^\mathrm{t} (c)$.
    \item $\mathscr{H}^\mathrm{t} (b)$ embeds isometrically into $\mathscr{H} ^\mathrm{t} (a)$
    \item $M^R _{b^\mathrm{t}}$ is an isometric multiplier of $\mathscr{H} ^\mathrm{t} (c)$ into $\mathscr{H} ^\mathrm{t} (a)$.
\end{enumerate}
\end{thm*}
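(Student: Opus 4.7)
The plan is to reduce the entire theorem to a single positive operator identity together with the standard de~Branges--Rovnyak sum theorem for operator-range spaces (essentially \cite[Ch.~16--18]{FM2}). From $a(R)=b(R)c(R)$ one first computes
\[
I-a(R)a(R)^* \;=\; \bigl(I-b(R)b(R)^*\bigr) \;+\; b(R)\bigl(I-c(R)c(R)^*\bigr)b(R)^*.
\]
Setting $T_1:=\sqrt{I-b(R)b(R)^*}$ and $T_2:=b(R)\sqrt{I-c(R)c(R)^*}$, the right-hand side equals $T_1 T_1^*+T_2 T_2^*$, while the left-hand side equals $TT^*$ for $T:=\sqrt{I-a(R)a(R)^*}$. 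Using the standard identification $\scr{M}(S)=\scr{M}(\sqrt{SS^*})$ isometrically for bounded $S$, this presents $\scr{H}^\mrt(a)=\scr{M}(T)$, $\scr{H}^\mrt(b)=\scr{M}(T_1)$, and $b(R)\scr{H}^\mrt(c)=\scr{M}(T_2)$, reducing the theorem to a question about the addition map on a sum of two operator ranges.

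Next I would invoke the complementation theorem: whenever $AA^*+BB^*=CC^*$, the addition map $\scr{M}(A)\oplus\scr{M}(B)\to\scr{M}(C)$ given by $(y,z)\mapsto y+z$ is a coisometry whose kernel is exactly the antidiagonal copy of $\scr{M}(A)\cap\scr{M}(B)$ obtained via $y\mapsto(y,-y)$. So the addition map is an isometric isomorphism precisely when $\scr{M}(A)\cap\scr{M}(B)=\{0\}$. Applying this with $A=T_1$, $B=T_2$, $C=T$ yields the orthogonal decomposition $\scr{H}^\mrt(a)=\scr{H}^\mrt(b)\oplus b(R)\scr{H}^\mrt(c)$ under the hypothesis $\scr{H}^\mrt(b)\cap b(R)\scr{H}^\mrt(c)=\{0\}$, and conclusions (1)--(3) of the theorem all read off from this direct sum (for (3), note that $b(R)\colon \scr{H}^\mrt(c)\to\scr{M}(T_2)$ is an isometric isomorphism by construction of the operator-range norm).

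The one step that requires genuine work, and which I expect to be the main obstacle, is the equivalence of the two intersection conditions. My plan is to exploit the intertwining identity
\[
b(R)^*\sqrt{I-b(R)b(R)^*}\;=\;\sqrt{I-b(R)^*b(R)}\,b(R)^*,
\]
which comes from $b(R)^*\,\varphi(b(R)b(R)^*)=\varphi(b(R)^*b(R))\,b(R)^*$ via the functional calculus and polynomial approximation. This identity moves elements across the factorization as follows. If $g\in\scr{H}^\mrt(b^*)\cap\scr{H}^\mrt(c)$, write $g=\sqrt{I-b(R)^*b(R)}\,u$ to obtain $b(R)g=\sqrt{I-b(R)b(R)^*}\,b(R)u\in\scr{H}^\mrt(b)$. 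Conversely, if $g\in\scr{H}^\mrt(c)$ and $b(R)g=\sqrt{I-b(R)b(R)^*}\,v$, then applying $b(R)^*$ gives $b(R)^*b(R)g=\sqrt{I-b(R)^*b(R)}\,b(R)^*v\in\scr{H}^\mrt(b^*)$, while $(I-b(R)^*b(R))g$ is automatically in $\scr{H}^\mrt(b^*)$, so their sum $g$ lies in $\scr{H}^\mrt(b^*)$. Combined with the injectivity of $b(R)$ on $\hardy$ for $b\not\equiv 0$ (bounded analytic right multipliers of Fock space are injective when nonzero), the map $g\mapsto b(R)g$ gives a bijection between $\scr{H}^\mrt(b^*)\cap\scr{H}^\mrt(c)$ and $\scr{H}^\mrt(b)\cap b(R)\scr{H}^\mrt(c)$, so the two intersections vanish simultaneously.
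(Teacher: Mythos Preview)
Your proposal is correct and follows essentially the same route as the paper. The paper defers items (1)--(3) to \cite[Theorem 18.8]{FM2}, which is precisely the operator--range sum/complementation theorem you invoke after writing $I-a(R)a(R)^*=(I-b(R)b(R)^*)+b(R)(I-c(R)c(R)^*)b(R)^*$. For the equivalence of the two intersection conditions, the paper quotes the identity $\scr{H}^\mrt(b)\cap\scr{M}^\mrt(b)=b(R)\,\scr{H}^\mrt(b^*)$ from \cite[Lemma 16.20]{FM2} and uses injectivity of $b(R)$; your intertwining computation $b(R)^*\sqrt{I-b(R)b(R)^*}=\sqrt{I-b(R)^*b(R)}\,b(R)^*$ together with the decomposition $g=b(R)^*b(R)g+(I-b(R)^*b(R))g$ is simply an explicit unwinding of that same lemma, so the two arguments coincide.
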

\begin{proof}
We prove the first equivalent statement, the rest of the proof can be found in \cite[Theorem 18.8]{FM2}. If $x \in \mathscr{H} ^\mathrm{t}  (b^*) \cap \mathscr{H} ^\mathrm{t} (c)$ then $b(R) x \in \mathscr{H} ^\mathrm{t} (b) \cap b(R) \mathscr{H} ^\mathrm{t} (c)$. Conversely, if $b(R) x \in \mathscr{H} ^\mathrm{t} (b) \cap b (R) \mathscr{H} ^\mathrm{t} (c)$ for some $x \in \mathscr{H} ^\mathrm{t} (c)$ then $b(R) x \in \mathscr{H} ^\mathrm{t} (b) \cap \mathscr{M} ^\mathrm{t} (b) = b (R) \mathscr{H} ^\mathrm{t} (b^*)$ by \cite[Lemma 16.20]{FM2}. Hence $b(R) x = b (R) y$ for some $y \in \mathscr{H} ^\mathrm{t} (b^*)$ and $x = y \in \mathscr{H} ^\mathrm{t} (c) \cap \mathscr{H} ^\mathrm{t} (b^*)$.  
\end{proof}

\begin{proof}{ (of Theorem \ref{orthodecomp})}
Consider any NC Szeg\"o kernel vector $K\{ Z, y ,v \}$. Then,
$$ \sup_{0<r<1} \|a_r^{-*} (R) b(R) ^*  K \{ Z , y ,v \}  \|_{\mathbb{H} ^2}  =  \sup \| K \{ Z , y , a_r ^\mathrm{t} (Z) ^{-1} b ^\mathrm{t} (Z) v \} \| < + \infty, $$
since $a_r (R) \stackrel{WOT}{\rightarrow} a(R)$ by Proposition \ref{afunconverge} and WOT convergence implies pointwise convergence. We conclude, by Lemma \ref{dBRnorm}, that the linear span of the NC Szeg\"o kernels is contained in $\mathscr{H} ^\mathrm{t} (b)$. (Since $b$ is non-CE, this also follows from the results of \cite{JM-freeCE}.) Again by Lemma \ref{dBRnorm},
\begin{eqnarray} \| K \{ Z , y ,v \} \| ^2 _b & = & \| K \{ Z ,y , v \} \| ^2 _{\mathbb{H} ^2} + \lim_{r\uparrow 1} \| K \{Z ,y , a _r ^\mathrm{t} (Z) ^{-1} b^\mathrm{t} (Z) v\}  \|_{\mathbb{H} ^2} ^2 \nonumber \\
& = & \| K \{ Z ,y , v \} \| ^2 _{\mathbb{H} ^2} + \| K \{Z ,y , H ^\mathrm{t} (Z) v \}  \|_{\mathbb{H} ^2} ^2 \nonumber \\
& = & \| K \{ Z , y ,v \} \| ^2 _B. \nonumber \end{eqnarray} Recall that $C = \left( \begin{smallmatrix} A \\ B \end{smallmatrix} \right)$ is inner, $\mathrm{Ran} \, C(R) = \mathrm{Gr} \, H (R) $ and the NC Szeg\"o kernels are a core for $H(R) ^*$. By \cite[Theorem 4.17]{JM-freeSmirnov}, $\mathscr{H} ^\mathrm{t} (B) = \mathrm{Dom} \, H(R) ^*$ if we equip $\mathrm{Dom} \, H(R) ^*$ with the graph norm. It follows that the NC Szeg\"o kernels (or free polynomials) are dense in $\mathscr{H} ^\mathrm{t} (B)$ and that $\mathscr{H} ^\mathrm{t} (B)$ embeds isometrically into $\mathscr{H} ^\mathrm{t} (b)$.

The fact that $\mathrm{Ran} \, b(R) \subseteq \mathrm{Ran} \, B(R)$ and $ b(R) b(R) ^* = B(R) D(R) D(R) ^* B(R) ^* \leq B(R)  B(R) ^*$ implies that $\mathscr{H} ^\mathrm{t} (B)$ is contractively contained in $\mathscr{H} ^\mathrm{t} (b)$. Moreover, by \cite[Theorem 16.23]{FM2}, 
$$ \mathscr{H} ^\mathrm{t} (b) = \mathscr{H} ^\mathrm{t} (B) + B(R) \mathscr{H} ^\mathrm{t} (D), $$ $M^R _{B^\mathrm{t}}$ defines a contraction from $\mathscr{H} ^\mathrm{t} (D)$ into $\mathscr{H} ^\mathrm{t} (b)$ and if $x \in \mathscr{H} ^\mathrm{t} (B)$, $y \in \mathscr{H} ^\mathrm{t} (D)$ then,
$$ \| x + B(R) y \| ^2 _b \leq \| x \| ^2 _B + \| y \| ^2 _D. $$ 
Using that $\mathscr{H} ^\mathrm{t} (B)$ is isometrically contained in $\mathscr{H} ^\mathrm{t} (b)$,
$$ \| x + B (R) y \| ^2 _b  =  \| x \| ^2 _B + 2 \mathrm{Re} \, \left\langle x, B (R) y \right\rangle_b + \| B (R) y \| ^2 _b. $$ Subtracting these two formulas gives:
$$ 0 \leq \| y \| ^2 _D - \| B (R) y \| ^2 _b - 2 \mathrm{Re} \, \left\langle x, B(R) y \right\rangle_b. $$ Here, 
$$ 0 \leq \| y \| ^2 _D - \| B (R) y \| ^2 _b, $$ since $B(R) y = M^R _{B ^\mathrm{t}} y$ and right multiplication by $B^\mathrm{t}$ defines a contraction from $\mathscr{H} ^\mathrm{t} (D)$ into $\mathscr{H} ^\mathrm{t} (b)$. Hence 
$$ 2 \mathrm{Re} \, \left\langle x, B (R) y \right\rangle_b \leq \| y \| ^2 _D - \| B (R) y \| ^2 _b. $$ Given any such $x,y$, choose $\lambda \in \partial \mathbb{D}$ so that 
$\left\langle \lambda x, B (R) y\right\rangle_b = | \left\langle x, B (R) y\right\rangle_b |$. Then for any $r >0$, 
$$ 2 r | \left\langle x, B(R) y\right\rangle_b | = 2 \mathrm{Re} \, \left\langle r\lambda x, B (R) y\right\rangle _b  \leq  \| y \| ^2 _D - \| B (R)  y \| ^2 _b, $$ and it follows that 
$\left\langle x, B(R) y\right\rangle_b =0$. That is,
$$\mathscr{H} ^\mathrm{t} (b) = \mathscr{H} ^\mathrm{t} (B) \oplus B (R) \mathscr{H} ^\mathrm{t} (D). $$ By reproducing kernel theory, this is equivalent to the condition that $\mathscr{H} ^\mathrm{t} (B) \cap B (R) \mathscr{H} ^\mathrm{t} (D) =0$, so that the remainder of the claim follows from the previous theorem. 
\end{proof}

\begin{cor} \label{AstarD}
With $a,b,A,B$ and $D$ as above,
$$ \mathscr{H} ^\mathrm{t} (D) \cap \mathscr{M} ^\mathrm{t} (A^* ) = \{ 0 \}. $$ 
\end{cor}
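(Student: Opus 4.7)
The plan is to identify the operator range space $\scr{M}^\mrt(A^*)$ with the complementary space $\scr{H}^\mrt(B^*)$, after which the conclusion follows essentially by re-reading the orthogonal decomposition of $\scr{H}^\mrt(b)$ supplied by Theorem \ref{orthodecomp}.

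The key observation is that since $C = \bsm A \\ B \esm$ is inner, the isometric identity $C(R)^*C(R) = I$ reads
$$ A(R)^* A(R) + B(R)^* B(R) = I, $$
so in particular $A(R)^* A(R) = I - B(R)^* B(R)$. Now I would invoke the standard operator-range space identity $\scr{M}(T) = \scr{M}(\sqrt{TT^*})$ (valid as an equality of Hilbert spaces for any bounded operator between Hilbert spaces) applied to $T = A(R)^*$. This yields
$$ \scr{M}^\mrt(A^*) \;=\; \scr{M}\!\left(\sqrt{A(R)^* A(R)}\right) \;=\; \scr{M}\!\left(\sqrt{I - B(R)^* B(R)}\right) \;=\; \scr{H}^\mrt(B^*). $$

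Having made this identification, the conclusion follows quickly. Theorem \ref{orthodecomp} gives the orthogonal direct sum $\scr{H}^\mrt(b) = \scr{H}^\mrt(B) \oplus B(R)\scr{H}^\mrt(D)$, which forces $\scr{H}^\mrt(B) \cap B(R)\scr{H}^\mrt(D) = \{0\}$. The unnumbered decomposition theorem immediately preceding Theorem \ref{orthodecomp} (assembled from \cite{FM2}), applied to the factorization $b = BD$ with $B,D \in [\mult]_1$, asserts that this trivial intersection is equivalent to $\scr{H}^\mrt(B^*) \cap \scr{H}^\mrt(D) = \{0\}$. Combining with the identification above,
$$ \scr{M}^\mrt(A^*) \cap \scr{H}^\mrt(D) \;=\; \scr{H}^\mrt(B^*) \cap \scr{H}^\mrt(D) \;=\; \{0\}, $$
which is exactly the claim.

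The only real subtlety lies in the Step-1 identification: one must invoke $\scr{M}(T) = \scr{M}(\sqrt{TT^*})$ as an equality of Hilbert spaces (not merely of underlying sets) and correctly track that $T = A(R)^*$ produces $TT^* = A(R)^* A(R)$, which is precisely the square of the operator whose range space defines $\scr{H}^\mrt(B^*)$. After that, the proof is essentially a matter of transcribing the orthogonal decomposition of Theorem \ref{orthodecomp} through the iff of the preceding \cite{FM2} decomposition theorem; no further analytic input is required.
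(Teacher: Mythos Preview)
Your proof is correct and follows essentially the same approach as the paper: both identify $\scr{M}^\mrt(A^*) = \scr{H}^\mrt(B^*)$ via the inner identity $A(R)^*A(R) = I - B(R)^*B(R)$, and both deduce $\scr{H}^\mrt(B^*) \cap \scr{H}^\mrt(D) = \{0\}$ from the orthogonal decomposition of Theorem~\ref{orthodecomp}. The only difference is cosmetic: you invoke the equivalence in the unnumbered decomposition theorem directly, whereas the paper re-derives that equivalence in place using the RKHS kernel fact and the identity $\scr{H}^\mrt(B) \cap \scr{M}^\mrt(B) = B(R)\scr{H}^\mrt(B^*)$ from \cite{FM2}.
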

\begin{proof}
By RKHS theory, if $K, k$ are CPNC kernels on the same NC set, $\mathcal{H} _{nc} (K + k) = \mathcal{H} _{nc} (K ) \oplus \mathcal{H} _{nc} (k)$ if and only if 
$$ \mathcal{H} _{nc} (K) \cap \mathcal{H} _{nc} (k) = \{ 0 \},$$ \cite[Section 6]{Aron-rkhs},\cite[Theorem 4.4]{JM-ncld}. By Theorem \ref{orthodecomp}, 
$$ \mathscr{H} ^\mathrm{t} (b) = \mathscr{H} ^\mathrm{t} (B) \oplus B(R) \mathscr{H} ^\mathrm{t} (D). $$ Adding together the CPNC kernels, $K^B$, of $\mathscr{H} ^\mathrm{t} (B)$ and $k$ of $B(R) \mathscr{H} ^\mathrm{t} (D)$ gives:
\begin{eqnarray} K^B (Z,W) [\cdot] + k (Z,W) [ \cdot ] & = & K(Z,W) \left[ (\cdot) - B^\mathrm{t} (Z) (\cdot ) B^\mathrm{t} (W) ^*   \right] \nonumber \\
& & + K(Z,W) \left[ B^\mathrm{t} (Z) (\cdot)  B^\mathrm{t} (W) ^* - D^\mathrm{t} (Z) B^\mathrm{t} (Z) ( \cdot ) B^\mathrm{t} (W) ^* D^\mathrm{t} (W) ^* \right ] \nonumber \\
& = & K (Z,W) [ \cdot ] - K(Z,W) [ b^\mathrm{t} (Z) (\cdot ) b^\mathrm{t} (W) ^* ] = K^b (Z,W)[\cdot ]. \nonumber \end{eqnarray} 
Hence $\mathscr{H} ^\mathrm{t} (b) = \mathcal{H} _{nc} (K^B + k )$ and the orthogonal decomposition of Theorem \ref{orthodecomp} then implies that 
$$ \mathscr{H} ^\mathrm{t} (B) \cap B(R) \mathscr{H} ^\mathrm{t} (D) = \{ 0 \}. $$ By the theory of operator--range spaces \cite[Section 16.6]{FM2}, 
$$ \mathscr{H} ^\mathrm{t} (B) \cap \mathscr{M} ^\mathrm{t} (B) = B(R) \mathscr{H} ^\mathrm{t} ( B^* ). $$ Hence,
$$ \{ 0 \} = \mathscr{H} ^\mathrm{t} (B) \cap B(R) \mathscr{H} ^\mathrm{t} (D )  \subseteq  \mathscr{H} ^\mathrm{t} (B) \cap \mathscr{M} ^\mathrm{t} (B) = B(R) \mathscr{H} ^\mathrm{t} (B ^* ). $$ Since $B(R)$ is injective,
$$ \mathscr{H} ^\mathrm{t} (D) \cap \mathscr{H} ^\mathrm{t} (B^* ) = \{ 0 \}. $$ 
Finally, the column $\left( \begin{smallmatrix} A \\ B \end{smallmatrix} \right)$ is inner so that
$$ \mathscr{H} ^\mathrm{t} (B ^* ) = \mathrm{Ran} \, \sqrt{I - B(R) ^* B(R)} = \mathrm{Ran} \, \sqrt{ A(R) ^* A(R)} = \mathscr{M} ^\mathrm{t} (A^* ). $$ 
\end{proof}

If $b \in [\mathbb{H} ^\infty _d ] _1$ is non-CE, then by \cite[Corollary 6.15]{JM-freeCE}, the right free de Branges--Rovnyak space, $\mathscr{H} ^\mathrm{t} (b)$ is both $L-$co-invariant and $L-$invariant and, following \cite{Sarason-doubly}, we then say that $\mathscr{H} ^\mathrm{t} (b)$ is \emph{doubly free shift invariant}. In this case, let $Y:= L | _{\mathscr{H} ^\mathrm{t} (b)}$.

\begin{lemma} \label{adjX}
Let $b \in [ \mathbb{H} ^\infty _d ] _1$ be non-CE. The restriction, $Y$, of $L$ to $\mathscr{H} ^\mathrm{t} (b)$ obeys the formula:
$$ Y = X^* + \left\langle \boldsymbol{b}, \cdot \right\rangle_b b^\mathrm{t}.$$
\end{lemma}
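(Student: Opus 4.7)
The plan is to verify the claimed identity by testing both sides against the NC reproducing kernel vectors $K^b\{Z,y,v\}$ of $\scr{H}^\mrt(b)$, which span a dense subspace. It is convenient to take the $\scr{H}^\mrt(b)$-adjoint of the claimed identity first, so that the goal becomes showing
$$ Y^* = X + \ip{b^\mrt}{\cdot}_b \, \mbf{b} $$
as operators from $\scr{H}^\mrt(b)$ into $\scr{H}^\mrt(b) \otimes \C^d$, where $Y^*$ denotes the adjoint of $Y$ in the $\scr{H}^\mrt(b)$-inner product. Since $b$ is non-column--extreme, $\scr{H}^\mrt(b)$ is $L$-invariant by \cite[Corollary 6.15]{JM-freeCE}, so each $L_k$ restricts to a bounded left multiplier of $\scr{H}^\mrt(b)$ and the standard formula for the action of adjoints of multipliers on NC kernel vectors applies to $Y$.

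First I would compute the two sides on a kernel vector. The general multiplier-adjoint formula, applied to $Y_k = L_k|_{\scr{H}^\mrt(b)}$ whose symbol is $\mf{z}_k$, gives
$$ Y^* K^b\{Z,y,v\} \;=\; K^b\{Z, Z^*y, v\} \;\in\; \scr{H}^\mrt(b) \otimes \C^d. $$
On the other hand, the action of $X$ on kernels (the identity already used in the proof of Lemma \ref{rank2defect}, coming from \cite[Proposition 5.6]{JM-freeCE}) reads
$$ X K^b\{Z,y,v\} \;=\; K^b\{Z, Z^*y, v\} - \mbf{b}\,\bigl(v^* b^\mrt(Z)^* y\bigr). $$
Because $b$ is non-CE, $b^\mrt \in \scr{H}^\mrt(b)$ by \cite[Theorem 6.4]{JM-freeCE}, so the reproducing property identifies the scalar on the right as an inner product inside $\scr{H}^\mrt(b)$:
$$ v^* b^\mrt(Z)^* y \;=\; \ip{b^\mrt}{K^b\{Z,y,v\}}_b. $$
Subtracting the two displayed formulas then yields
$$ Y^* K^b\{Z,y,v\} - X K^b\{Z,y,v\} \;=\; \ip{b^\mrt}{K^b\{Z,y,v\}}_b \, \mbf{b}, $$
which is exactly the rank-one operator $\ip{b^\mrt}{\cdot}_b\,\mbf{b}$ evaluated at $K^b\{Z,y,v\}$. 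By linearity and density of the NC kernel vectors in $\scr{H}^\mrt(b)$, this forces $Y^* = X + \ip{b^\mrt}{\cdot}_b\,\mbf{b}$, and taking the $\scr{H}^\mrt(b)$-adjoint recovers the stated identity.

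The only non-routine ingredient is keeping careful track of two different adjoints in play. The operator $X := L^*|_{\scr{H}^\mrt(b)}$ is the \emph{restriction} of the $\hardy$-adjoint of $L$, while $Y^*$ is the Hilbert-space adjoint of $Y = L|_{\scr{H}^\mrt(b)}$ taken inside the de Branges--Rovnyak inner product. These two would coincide if $\scr{H}^\mrt(b)$ were isometrically contained in $\hardy$, but only a contractive inclusion holds; the rank-one correction $\ip{b^\mrt}{\cdot}_b\,\mbf{b}$ quantifies precisely this discrepancy, concentrated along the distinguished direction $b^\mrt \in \scr{H}^\mrt(b)$ whose presence is guaranteed by the non-CE hypothesis. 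Once this identification is in place the rest is the short kernel computation above.
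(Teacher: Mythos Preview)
Your proof is correct and follows essentially the same route as the paper's: both arguments rest on the identity $X K^b\{Z,y,v\} = K^b\{Z,Z^*y,v\} - \mbf{b}\,(v^* b^\mrt(Z)^* y)$ and test against kernel vectors. The only cosmetic difference is that you take adjoints first and invoke the multiplier-adjoint formula $Y^* K^b\{Z,y,v\} = K^b\{Z,Z^*y,v\}$ directly, whereas the paper pairs $X^* \mbf{h}$ against kernels and unwinds to reach $y^* Z\,\mbf{h}^\mrt(Z)v - y^* b^\mrt(Z)v\,\ip{\mbf{b}}{\mbf{h}}_b$; the computations are equivalent.
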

\begin{proof}
Given any $\boldsymbol{h} \in \mathscr{H} ^\mathrm{t} (b) \otimes \mathbb{C} ^d$, we calculate $X^* \boldsymbol{h}$:
\begin{eqnarray} \left\langle K^b \{ Z,y,v \}, X^* \boldsymbol{h} \right\rangle_b & = & \left\langle X K^b \{ Z,y,v\}, \boldsymbol{h} \right\rangle_b \nonumber \\
& = & \left\langle L^* \left(K \{ Z,y,v\} - K \{ Z,y, b^\mathrm{t} (Z) v\} b^\mathrm{t} \right), \boldsymbol{h}\right\rangle_b \nonumber \\
& = & \left\langle \left( K\{ Z , Z^* y , v \} - K \{Z , Z^* y , b^\mathrm{t} (Z) v \} b^\mathrm{t} -  K \{Z ,  y , b^\mathrm{t} (Z) v \} (0) \boldsymbol{b} \right), \boldsymbol{h} \right\rangle_b \nonumber \\
& = & \left\langle K^b \{ Z , Z^* y , v \} - \overline{y^* b^\mathrm{t} (Z) v } \boldsymbol{b}, \boldsymbol{h}\right\rangle _{b} \nonumber \\
& = & y^* Z \boldsymbol{h} ^\mathrm{t} (Z) v - y^* b^\mathrm{t} (Z) v \left\langle \boldsymbol{b}, \boldsymbol{h} \right\rangle_b. \nonumber \end{eqnarray} 
This proves that 
$$ X^* = Y - \left\langle \boldsymbol{b}, \cdot \right\rangle_b b^\mathrm{t}, $$ and the formula for $Y$ follows.
\end{proof}

The next two lemmas provide an analogue of a formula of Sarason for the norm of $b^\mathrm{t}$ in $\mathscr{H} ^\mathrm{t} (b)$ when $b$ is non-CE \cite[Lemma 2]{Sarason-afun}.
\begin{lemma}
Let $A,B \in [\mathbb{H} ^\infty _d ]_1 $ be a Smirnov column--inner pair and $C = \left( \begin{smallmatrix} A \\ B \end{smallmatrix} \right)$ be inner. Then,
$$ \| B ^\mathrm{t} \| ^2 _{B} = \frac{1}{A(0) ^2} -1. $$
\end{lemma}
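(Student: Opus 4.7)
My plan is to derive a closed-form identity for $B^\mrt$ inside $\scr{H}^\mrt(B)$ and then extract the norm by pairing against $B^\mrt$ itself. First I would observe that since $A$ is outer and $C=\bsm A \\ B \esm$ is inner, one has $I-B(R)^*B(R) = A(R)^*A(R)$, so $B$ is non-column-extreme and, after normalizing $A(0)>0$, Corollary~\ref{Sarmaxouter} identifies $A$ as the Sarason outer function of $B$. This gives both the Sarason formula $A(0)^2 = 1 - |B(0)|^2 - \|\mbf{B}\|_B^2$ and the membership $B^\mrt \in \scr{H}^\mrt(B)$ (where $\mbf{B} = L^*B^\mrt$).

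The central step is the following. In $\bH^2$ one has the trivial Fock-space splitting $B^\mrt = B(0)\cdot 1 + L\mbf{B}$. Because $B$ is non-CE, the space $\scr{H}^\mrt(B)$ is doubly shift-invariant, so $L\mbf{B} = Y\mbf{B}$ lies in $\scr{H}^\mrt(B)$, and Lemma~\ref{adjX} expands it as $Y\mbf{B} = X^*\mbf{B} + \|\mbf{B}\|_B^{2}\, B^\mrt$. Combining the two descriptions of $B^\mrt$ and rearranging yields the key identity
\[ (1-\|\mbf{B}\|_B^{2})\, B^\mrt \;=\; B(0)\cdot 1 \;+\; X^*\mbf{B} \qquad \text{in } \scr{H}^\mrt(B), \]
where $1 \in \scr{H}^\mrt(B)$ follows from $K^B\{0,1,1\} = 1 - \overline{B(0)}\,B^\mrt$ being the reproducing kernel at the origin together with $B^\mrt \in \scr{H}^\mrt(B)$.

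To finish, I would take the $\scr{H}^\mrt(B)$-inner product of both sides with $B^\mrt$. The adjoint relation together with $X B^\mrt = \mbf{B}$ gives $\ip{X^*\mbf{B}}{B^\mrt}_B = \ip{\mbf{B}}{X B^\mrt}_B = \|\mbf{B}\|_B^{2}$, while the decomposition $1 = K^B\{0,1,1\} + \overline{B(0)}B^\mrt$ together with the reproducing identity $\ip{K^B\{0,1,1\}}{B^\mrt}_B = B^\mrt(0) = B(0)$ produces $\ip{1}{B^\mrt}_B = B(0)\bigl(1 + \|B^\mrt\|_B^{2}\bigr)$. Writing $t := \|B^\mrt\|_B^{2}$ and $s := \|\mbf{B}\|_B^{2}$, the displayed identity collapses to the scalar equation $(1-s)t = |B(0)|^{2}(1+t) + s$, which via the Sarason formula $A(0)^2 = 1 - |B(0)|^2 - s$ reduces to $A(0)^2\,t = 1 - A(0)^2$, yielding the claim $\|B^\mrt\|_B^{2} = 1/A(0)^2 - 1$.

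The only real subtlety is careful bookkeeping of the conjugate-linearity conventions when evaluating $\ip{1}{B^\mrt}_B$ through the reproducing-kernel identity --- a naive computation treating $B(0)$ as real gives the correct answer, but one has to check that the $|B(0)|^{2}$ appearing at the end comes out of $B(0)\cdot \overline{B(0)}$ and not $B(0)^{2}$. Once the key algebraic identity for $(1-\|\mbf{B}\|_B^{2})B^\mrt$ is in place, the rest is routine.
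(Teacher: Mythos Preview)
Your proof is correct, but it takes a genuinely different route from the paper's argument for this lemma. The paper exploits the hypothesis that $C=\bsm A\\ B\esm$ is inner to invoke the graph--norm identification $\scr{H}^\mrt(B)=\nbdom H(R)^*$ for the closed Smirnov multiplier $H(R)=B(R)A(R)^{-1}$, giving $\|x\|_B^2=\|x\|_{\bH^2}^2+\|H(R)^*x\|_{\bH^2}^2$; the norm of $B^\mrt$ is then computed directly in Fock space by manipulating $A(R)^{-*}B(R)^*B(R)1$ and using that $A(R)^{-*}1=A(0)^{-1}1$. Your argument, by contrast, stays entirely inside $\scr{H}^\mrt(B)$: you combine the Fock splitting $B^\mrt=B(0)\,1+L\mbf{B}$ with Lemma~\ref{adjX} to obtain the identity $(1-\|\mbf{B}\|_B^2)\,B^\mrt=B(0)\,1+X^*\mbf{B}$, and then pair with $B^\mrt$ using only the reproducing property of $K_0^B$ and the Sarason formula $A(0)^2=1-|B(0)|^2-\|\mbf{B}\|_B^2$.

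What your approach buys is generality: nowhere do you actually use that $C$ is inner, only that $A$ is the Sarason outer function of the non-CE $B$. In other words, your computation already proves the \emph{next} lemma in the paper (``If $b$ is non-CE with Sarason function $a$ then $\|b^\mrt\|_b^2=a(0)^{-2}-1$''), and it does so more cleanly than the paper's own proof of that more general statement, which proceeds by substituting $b^\mrt=YXb^\mrt+b(0)1$ into the Sarason coefficient formula $\hat a_\om=-a(0)\ip{b^\mrt}{X^\om b^\mrt}_b$ and invoking the rank-two defect identity of Lemma~\ref{rank2defect}. The paper's proof of the present lemma, on the other hand, has the merit of being a direct computation in $\bH^2$ that does not rely on the $Y=X^*+\ip{\mbf{b}}{\cdot}b^\mrt$ formula.
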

\begin{proof}
This follows from \cite[Theorem 4.17]{JM-freeSmirnov}: $\mathrm{Ran} \, C(R)$ is the graph, $\mathrm{Gr} \, H (R)$ of the right Smirnov multiplier $H(R) := B(R) A(R) ^{-1}$ and $\mathscr{H} ^\mathrm{t} (B) = \mathrm{Dom} \, H(R) ^*$. For any $x \in \mathscr{H} ^\mathrm{t} (B) = \mathrm{Dom} \, H(R) ^*$, 
$$ \| x \| ^2 _B = \| x \| ^2 _{\mathbb{H} ^2} + \| H(R) ^* x \| ^2 _{\mathbb{H} ^2}. $$ 
In particular,
\begin{eqnarray} \| B^\mathrm{t} \| ^2 _B & = & \| B^\mathrm{t} \| ^2 _{\mathbb{H} ^2 } + \| A(R) ^{-*} B(R) ^* B(R) 1 \| ^2 _{\mathbb{H} ^2} \nonumber \\
& = & \| B^\mathrm{t} \| ^2 _{\mathbb{H} ^2 } + \left\langle A(R) ^{-*} B(R) ^* B(R) 1, A(R) ^{-*} (I - A(R) ^* A(R) ) 1 \right\rangle_{\mathbb{H} ^2} \nonumber \\
& = & \| B^\mathrm{t} \| ^2 _{\mathbb{H} ^2 } + \left\langle A(R) ^{-*} B(R) ^* B(R) 1, A(R) ^{-*} 1\right\rangle_{\mathbb{H} ^2}  - \left\langle A(R) ^{-*} B(R) ^* B(R) 1, A(R) 1 \right\rangle_{\mathbb{H} ^2} \nonumber \\
& = &  \| B^\mathrm{t} \| ^2 _{\mathbb{H} ^2 }  + \frac{1}{A(0)} \left\langle A(R) ^{-*} (I - A(R) ^* A(R) 1, 1 \right\rangle_{\mathbb{H} ^2} - \| B ^\mathrm{t} \| ^2 _{\mathbb{H} ^2} \nonumber \\
& = & \frac{1}{A(0) ^2} - \frac{1}{A(0)}\left\langle 1, A(R)1\right\rangle_{\mathbb{H} ^2} \nonumber \\
& = & \frac{1}{A(0) ^2} -1. \nonumber \end{eqnarray}
\end{proof}
The conclusion of the previous lemma holds more generally with a different argument:
\begin{lemma} \label{zeroform}
If $b \in [\mathbb{H} ^\infty _d ] _1$ is non-CE and $a \in [ \mathbb{H} ^\infty _d ] _1$ is its outer Sarason function, then 
$$ \| b ^\mathrm{t} \| ^2 _b = \frac{1}{a(0) ^2} -1. $$
\end{lemma}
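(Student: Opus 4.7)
The plan is to compute $Y\mbf{b}$ in two independent ways, where $Y = L|_{\scr{H}^\mrt(b)}$ is the restriction of the row $L$ to the (doubly $L$-invariant) space $\scr{H}^\mrt(b)$ and $\mbf{b} = L^* b^\mrt = Xb^\mrt \in \scr{H}^\mrt(b)\otimes\C^d$. Equating the two resulting expressions will collapse to a scalar identity relating $\|b^\mrt\|_b^2$, $\|\mbf{b}\|_b^2$, $|b(0)|^2$ and $a(0)^2$, from which the claimed formula drops out after substituting the defining Sarason identity $a(0)^2 = 1 - |b(0)|^2 - \|\mbf{b}\|_b^2$.

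For the first computation I would apply Lemma \ref{adjX} to obtain
\[
Y = X^* + \ip{\mbf{b}}{\cdot}_b\, b^\mrt, \qquad \text{so that} \qquad Y\mbf{b} = X^*X\, b^\mrt + \|\mbf{b}\|_b^2\, b^\mrt.
\]
Since $b$ is non-CE, the rank-two defect identity of Lemma \ref{rank2defect} gives $X^*X = I - K_0^b(K_0^b)^* - a(0)^2\, \ip{b^\mrt}{\cdot}_b\, b^\mrt$. A short calculation with the CPNC reproducing kernel of $\scr{H}^\mrt(b)$ produces $K_0^b = 1 - \ov{b(0)}\, b^\mrt$ and $(K_0^b)^* b^\mrt = b^\mrt(0) = b(0)$, and assembling these pieces yields
\[
X^*X\, b^\mrt = -b(0) + \bigl(1 + |b(0)|^2 - a(0)^2\, \|b^\mrt\|_b^2\bigr) b^\mrt.
\]

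For the second computation, $Y$ is by construction the restriction of $L$, so $Y\mbf{b} = LL^* b^\mrt = b^\mrt - b(0)\cdot 1$ by the Cuntz identity $\sum_j L_j L_j^* = I - P_{\C}$, where $P_{\C}$ is the rank-one projection onto the vacuum vector $1 \in \hardy$. The equality is sensible inside $\scr{H}^\mrt(b)$ because $b^\mrt \in \scr{H}^\mrt(b)$ (as $b$ is non-CE), hence $1 = K_0^b + \ov{b(0)}\, b^\mrt$ also lies in $\scr{H}^\mrt(b)$.

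Equating the two expressions for $Y\mbf{b}$, the constant terms $-b(0)$ cancel, leaving $1 + |b(0)|^2 - a(0)^2\|b^\mrt\|_b^2 + \|\mbf{b}\|_b^2 = 1$, equivalently $a(0)^2\, \|b^\mrt\|_b^2 = |b(0)|^2 + \|\mbf{b}\|_b^2$. Substituting the Sarason identity $\|\mbf{b}\|_b^2 = 1 - |b(0)|^2 - a(0)^2$ then gives $a(0)^2\|b^\mrt\|_b^2 = 1 - a(0)^2$, which rearranges to the claim. I do not foresee any serious obstacle: once Lemmas \ref{adjX} and \ref{rank2defect} are invoked and the explicit form of $K_0^b$ is in hand, the argument reduces to scalar bookkeeping.
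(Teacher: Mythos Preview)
Your argument is correct and uses the same two ingredients as the paper---Lemma \ref{adjX} and the rank-two defect identity of Lemma \ref{rank2defect}---applied to the basic relation $Y\mbf{b} = b^\mrt - b(0)\,1$. The paper's proof threads this same relation through the Sarason coefficient formula $\hat{a}_\om = -a(0)\ip{b^\mrt}{X^\om b^\mrt}_b$: it writes $b^\mrt = YXb^\mrt + b(0)1$, pairs against $X^\om b^\mrt$, expands using the two lemmas, and solves the resulting scalar equation for $\|b^\mrt\|_b^2$. Your version is a genuine streamlining: by working directly at the level of vectors in $\scr{H}^\mrt(b)$ rather than at the level of Taylor coefficients, you collapse the computation to a single comparison of the $b^\mrt$-component, and you avoid the paper's closing proviso that $\hat{a}_\om \neq 0$ for some $\om \neq \emptyset$ (you need only $b^\mrt \neq 0$, and the case $b \equiv 0$ is trivial).
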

\begin{proof}
For $\omega \neq \emptyset$, recall that the Taylor coefficients of $a$ are given by the formula:
$$ \hat{a} _\omega = - a(0) \left\langle b^\mathrm{t}, X^\omega b^\mathrm{t}\right\rangle _b. $$ 
Note that we can write $b^\mathrm{t} = YX b^\mathrm{t} + b(0) 1$ where $Y= L | _{\mathscr{H} ^\mathrm{t} (b)}$. Substituting this into the above formula yields:
$$ \hat{a} _\omega  =  - a(0) \left( \left\langle YX b^\mathrm{t}, X^\omega b^\mathrm{t}\right\rangle_b + \overline{b(0)} \left\langle 1, X^\omega b^\mathrm{t}\right\rangle_b \right). $$
Then, applying Lemma \ref{adjX} and Lemma \ref{rank2defect},
\begin{eqnarray} \left\langle 1, X^\omega b^\mathrm{t}\right\rangle_b & = & \left\langle K_0 ^b + \overline{b} (0) b^\mathrm{t}, X^\omega b^\mathrm{t} \right\rangle_b \nonumber \\
& = & \hat{b} _\omega - \frac{b(0)}{a(0)} \hat{a} _\omega. \nonumber \end{eqnarray} Now calculate,
\begin{eqnarray} -a(0) \left\langle YXb^\mathrm{t}, X^\omega b^\mathrm{t}\right\rangle _b & = & -a(0)\left( \left\langle X^*X b^\mathrm{t}, X^\omega b^\mathrm{t}\right\rangle _b + \boldsymbol{b} ^* \boldsymbol{b} \left\langle b^\mathrm{t}, X^\omega b^\mathrm{t}\right\rangle_b \right) \nonumber \\
& = & -a(0) \left\langle b^\mathrm{t}, X^\omega b^\mathrm{t}\right\rangle_b + a(0) \overline{b(0)} b_\omega + a(0) ^3 \| b ^\mathrm{t} \| ^2 _b \left\langle b^\mathrm{t}, X^\omega b^\mathrm{t}\right\rangle_b \\ 
& &  - a(0) (1 - |b(0) | ^2 - a(0) ^2 ) \left\langle b^\mathrm{t}, X^\omega b^\mathrm{t}\right\rangle_b \nonumber \\
& = & \hat{a}_\omega + a(0) \overline{b(0)} \hat{b} _\omega - a(0) ^2 \| b ^\mathrm{t} \| ^2 _b \hat{a} _\omega + (1 - |b (0) |^2 - a(0) ^2 ) \hat{a}_\omega. \nonumber \end{eqnarray} 
Putting this all together gives:
\begin{eqnarray} \hat{a} _\omega & = & \hat{a}_\omega + a(0) \overline{b(0)} \hat{b}_\omega - a(0) ^2 \| b ^\mathrm{t} \| ^2 _b \hat{a} _\omega + (1 - |b (0) |^2 - a(0) ^2 ) \hat{a}_\omega \nonumber \\
& & -a(0) \overline{b} (0) \left( \hat{b} _\omega - \frac{b(0)}{a(0)} \hat{a}_\omega \right)  \nonumber \\
& = & 2 \hat{a} _\omega - a(0) ^2 \| b ^\mathrm{t} \| ^2 _b \hat{a}_\omega - (|b(0) | ^2 + a(0) ^2 ) \hat{a} _\omega + |b (0) | ^2 \hat{a} _\omega. \nonumber \end{eqnarray} 
Solving for $\| b ^\mathrm{t} \| ^2 _b$ gives:
$$ a(0) ^2 \| b ^\mathrm{t} \| ^2 _b \hat{a} _\omega = \hat{a} _\omega - a(0) ^2 \hat{a} _\omega, $$ or equivalently,
$$ \| b ^\mathrm{t} \| ^2 _b = \frac{1}{a(0) ^2} -1. $$ This is of course provided that $\hat{a} _\omega \neq 0$ for at least one $\omega \neq \emptyset$. However if this were true $a = a(0)1$ which can only happen trivially.
\end{proof}

\begin{prop} \label{bdecomp}
The vector $b^\mathrm{t}$ has the direct sum decomposition 
$$ b^\mathrm{t} = \frac{1}{D(0)} B^\mathrm{t} \oplus \frac{-1}{D(0)} B(R) K_0 ^D, $$ with respect to $\mathscr{H} ^\mathrm{t} (b) = \mathscr{H} ^\mathrm{t} (B) \oplus B(R) \mathscr{H} ^\mathrm{t} (D)$. 
\end{prop}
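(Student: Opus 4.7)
The plan is to obtain the decomposition by a direct computation and then verify that each summand lies in the correct summand of the orthogonal sum $\scr{H}^\mrt(b) = \scr{H}^\mrt(B) \oplus B(R) \scr{H}^\mrt(D)$ supplied by Theorem~\ref{orthodecomp}. The starting point is the factorization $c(R) = C(R) D(R)$ coming from the inner--outer factorization of $c$; reading off the second coordinate gives $b(R) = B(R) D(R)$, and applying both sides to the vacuum yields the fundamental identity
$$ b^\mrt = b(R)\,1 = B(R) D(R)\,1 = B(R)\,D^\mrt. $$

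Next I would compute the reproducing kernel vector $K_0^D \in \scr{H}^\mrt(D)$ at the origin explicitly. From the CPNC kernel formula for the right free de Branges--Rovnyak space of $D$, one gets $K^D(Z,0)[1] = 1 - D^\mrt(Z)\,\overline{D(0)}$, and since $D(0) > 0$ by the chosen normalization of the inner--outer factorization, this identifies $K_0^D = 1 - D(0)\,D^\mrt$ as a vector in $\hardy$. Solving for $D^\mrt$ and plugging back into the identity $b^\mrt = B(R) D^\mrt$ gives
$$ b^\mrt = \frac{1}{D(0)}\,B(R)\cdot 1 \;-\; \frac{1}{D(0)}\,B(R) K_0^D \;=\; \frac{1}{D(0)}\,B^\mrt \;+\; \Big(-\tfrac{1}{D(0)}\Big)\,B(R) K_0^D, $$
which is the stated formula.

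It remains to check that each summand sits in the correct subspace. The second, $-\tfrac{1}{D(0)} B(R) K_0^D$, manifestly belongs to $B(R) \scr{H}^\mrt(D)$ since $K_0^D \in \scr{H}^\mrt(D)$ by construction. For the first, I need $B^\mrt \in \scr{H}^\mrt(B)$: the column $C = \bsm A \\ B \esm$ being inner gives $I - B(R)^* B(R) = A(R)^* A(R)$ with $A$ outer and $A(0) > 0$, so $A$ is a non-trivial factorizable minorant of $I - B(R)^* B(R)$, witnessing that $B$ is non-CE (in fact $A$ is the Sarason outer function of $B$, by Corollary~\ref{Sarmaxouter}). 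The containment $B^\mrt \in \scr{H}^\mrt(B)$ then follows from \cite[Theorem~6.4]{JM-freeCE}. I do not anticipate any real obstacle; the only delicate point is the bookkeeping with the letter-reversal convention $b(R) = M^R_{b^\mrt}$, which is what turns the operator factorization $b(R) = B(R)D(R)$ into the symbol identity $b^\mrt = B(R) D^\mrt$ (rather than the ill-defined $B^\mrt \cdot D^\mrt$), and the resulting correct normalization constant $1/D(0)$.
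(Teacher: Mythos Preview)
Your argument is correct and considerably more direct than the paper's. The paper first computes the projection $Pb^\mrt$ of $b^\mrt$ onto $\scr{H}^\mrt(B)$ by a lengthy reproducing-kernel calculation: it writes $b^\mrt = YXb^\mrt + b(0)1$, expands $\ip{K^B\{Z,y,v\}}{YXb^\mrt}_b$ using Lemma~\ref{adjX} and the formula for $X K^B\{Z,y,v\}$, and after a page of identities arrives at $Pb^\mrt = \frac{1}{D(0)}B^\mrt$. Only then does it obtain $Qb^\mrt$ by subtraction, and at that final step it uses exactly the identity $b^\mrt = D^\mrt B^\mrt = B(R)D^\mrt$ that you start from.

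Your route bypasses the kernel computation entirely: once you have $b^\mrt = B(R)D^\mrt$ and the explicit form $K_0^D = 1 - \overline{D(0)}\,D^\mrt$, the algebraic splitting is immediate, and Theorem~\ref{orthodecomp} together with \cite[Theorem~6.4]{JM-freeCE} confirm that the two pieces land in the two orthogonal summands. The paper's approach does not seem to purchase anything extra for this proposition; your argument is simply shorter. One minor point of care: the paper does not explicitly normalize $D(0)>0$ in the setup preceding the proposition (it only records that $D$ is outer, hence $D(0)\neq 0$), so strictly speaking the substitution $K_0^D = 1 - D(0)\,D^\mrt$ uses $D(0)\in\mathbb{R}$; either note that one may normalize $D$ so that $D(0)>0$, or carry the conjugate $\overline{D(0)}$ through, in which case the constants in the decomposition become $1/\overline{D(0)}$.
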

\begin{proof}
As before we write $b^\mathrm{t} = YX b^\mathrm{t} + b(0) 1$. Since $B$ is non-CE, we have that $1 \in \mathscr{H} ^\mathrm{t} (B)$. To compute the projection of $YX b^\mathrm{t}$ onto $\mathscr{H} ^\mathrm{t} (B)$, calculate $\left\langle K^B \{Z ,y , v \}, YX b^\mathrm{t}\right\rangle_b. $ First,
\begin{eqnarray} Y^* K^B \{Z, y ,v \} & = & X K^B  \{ Z , y , v \} + \left\langle b^\mathrm{t}, K^B \{Z , y ,v \}\right\rangle_b \boldsymbol{b} \nonumber \\
& = & L^* \left( K \{Z, y ,v \} - K\{ Z , y , B^\mathrm{t} (Z) v \} B^\mathrm{t} \right) + \left\langle b^\mathrm{t}, K^B \{Z , y ,v \}\right\rangle_b \boldsymbol{b} \nonumber \\
& = & K \{Z , Z^* y , v \} - K \{ Z , Z^* y , B^\mathrm{t}{Z} v\} B^\mathrm{t} - K\{ Z , y , B^\mathrm{t} (Z) v \} (0) \boldsymbol{B}  \nonumber \\
& & + \left\langle b^\mathrm{t}, K^B \{Z , y ,v \}\right\rangle_b \boldsymbol{b} \nonumber \\ 
& = & K^B \{ Z , Z^*y , v \} - \overline{y^* B^\mathrm{t} (Z) v } \boldsymbol{B} + \left\langle b^\mathrm{t}, K^B \{Z , y ,v \}\right\rangle_b \boldsymbol{b}. \nonumber \end{eqnarray} 
Putting this all together yields:
\begin{eqnarray}  \left\langle K^B \{Z ,y , v \}, YX b^\mathrm{t}\right\rangle_b  & = &  \left\langle  Y^* K^B \{Z , y, v \}, \boldsymbol{b}\right\rangle_b \nonumber \\
& = & \left\langle  Y^* K^B \{Z , y, v \}, B(R) \boldsymbol{D} \oplus D(0) \boldsymbol{B}\right\rangle_b \nonumber \\
& = & D(0) \left\langle K^B \{ Z , Z^*y , v \} - \overline{y^* B^\mathrm{t} (Z) v } \boldsymbol{B}, \boldsymbol{B}\right\rangle_B + \left\langle K^B \{ Z , y ,v \}, b^\mathrm{t}\right\rangle_b \boldsymbol{b} ^* \boldsymbol{b} \nonumber \\
& = & D(0)  y^* (B^t (Z) - B(0) ) v  - D(0) y^* B^\mathrm{t} (Z)v (1 - |B(0) | ^2 - A(0) ^2 )    \nonumber \\
& & + \left\langle  K^B \{ Z , y ,v \}, b^\mathrm{t}\right\rangle (1 - |b(0) | ^2 - a(0) ^2 ) \nonumber \\
& = & - b(0) y^* v + D(0) (|B(0) | ^2 + A(0) ^2 ) y^* B^\mathrm{t} (Z) v  \nonumber \\
& & + \left\langle  K^B \{ Z , y ,v \}, b^\mathrm{t}\right\rangle (1 - |b(0) | ^2 - a(0) ^2 ). \nonumber \end{eqnarray}

Finally,
\begin{eqnarray}  \left\langle K^B \{Z ,y , v \},  b^\mathrm{t}\right\rangle_b  & = &  - b(0) y^* v + D(0) (|B(0) | ^2 + A(0) ^2 ) y^* B^\mathrm{t} (Z) v  \nonumber \\
& & +\left\langle  K^B \{ Z , y ,v \}, b^\mathrm{t}\right\rangle (1 - |b(0) | ^2 - a(0) ^2 ) + y^*v b(0). \nonumber \end{eqnarray} Solving for $\left\langle  K^B \{ Z , y ,v \}, b^\mathrm{t}\right\rangle$ gives:
$$ D(0) ^2 (|B(0) |^2 + A(0) ^2 ) \left\langle  K^B \{ Z , y ,v \}, b^\mathrm{t}\right\rangle  =  D(0) (|B(0) | ^2 + A(0) ^2 ) y^* B^\mathrm{t} (Z) v. $$ Equivalently,
$$ Pb^\mathrm{t} = \frac{1}{D(0)} B^\mathrm{t}, $$ where $P := P _{\mathscr{H} ^\mathrm{t} (B)}$ denotes the orthogonal projection of $\mathscr{H} ^\mathrm{t} (b)$ onto the subspace $\mathscr{H} ^\mathrm{t} (B)$.  

Let $Q:= P_{B(R) \mathscr{H} ^\mathrm{t} (D)} : \mathscr{H} ^\mathrm{t} (b) \rightarrow B(R) \mathscr{H} ^\mathrm{t} (D)$ be the orthogonal projection so that $Q=I-P$. It follows that 
$$ Qb^\mathrm{t} = b^\mathrm{t} - \frac{1}{D(0)} B^\mathrm{t}. $$ We can write:
\begin{eqnarray} Q b^\mathrm{t} & = & D^\mathrm{t} B^\mathrm{t} - \frac{1}{D(0)} B^\mathrm{t}  \nonumber \\
& = & - \frac{1}{D(0)} \left( B^\mathrm{t} - D^\mathrm{t} B^\mathrm{t} D(0) \right) \nonumber \\
& = & - \frac{1}{D(0)} B(R) K^D _0, \nonumber \end{eqnarray} so that the direct sum decomposition of $b^\mathrm{t}$ with respect to
$\mathscr{H} ^\mathrm{t} (b) = \mathscr{H} ^\mathrm{t} (B) \oplus B(R) \mathscr{H} ^\mathrm{t} (D)$ is:
$$ b^\mathrm{t} = \frac{1}{D(0)} B^\mathrm{t} \oplus \frac{-1}{D(0)} B(R) K^D _0. $$ 
\end{proof}

\begin{remark} \label{constructa}
Under the above assumptions, $a = AD$, $b=BD$, $A$ and $B$ are a Smirnov column--inner pair, $A$ and $a$ are the Sarason outer functions of $B$ and $b$, $D \in [\mathbb{H} ^\infty _d ] _1$ is outer, CE and $\mathscr{H} ^\mathrm{t} (D) \cap \mathscr{M} ^\mathrm{t} (A^* ) = \{ 0 \}$. If it is true that $c = \left( \begin{smallmatrix} b \\ a \end{smallmatrix} \right)$ is always inner, whenever $a,b \in [\mathbb{H} ^\infty _d ] _1$ and $a$ is the NC Sarason outer function of $b$, then $c = C D$ where $C := \left( \begin{smallmatrix} B \\ A \end{smallmatrix} \right)$ is also inner. However, if left multiplication by $c = CD$ and $C$ are both isometries, this implies that $D(L)$ is also an isometry, \emph{i.e.} inner so that $D$ is both inner and outer which means that $D(L)$ is an isometry with dense range, \emph{i.e.} a unitary. By Davidson--Pitts \cite[Corollary 1.5]{DP-inv}, the left multiplier algebra of $\mathbb{H} ^2 _d$ contains no non-constant normal elements. Hence $D = \zeta$ for some $\zeta \in \partial \mathbb{D}$ and since we can assume $D(0) >0$, $D =1$ and $c=C$.  

In order to show that $c = \left( \begin{smallmatrix} b \\ a \end{smallmatrix} \right)$ is always inner, one might hope to show that the fact that $a=AD$ is the Sarason function of $b=BD$ where $A$ is the Sarason function of $B$ can only hold if $D=1$ is constant. However, these assumptions do not seem to place any such restrictions on $D$. Indeed, since $a(R) = A(R) D(R)$, if 
$$ A^\mathrm{t} := \sum _\beta \hat{A} ^\mathrm{t} _\beta L^\beta 1, \quad \mbox{and} \quad D^\mathrm{t} = \sum _\alpha \hat{D} ^\mathrm{t} _\alpha L^\alpha 1, $$ 
then 
$$ a ^\mathrm{t} = D ^\mathrm{t} A ^\mathrm{t} = \sum _\gamma \hat{a}^\mathrm{t} _\gamma L^\gamma 1, $$ where 
$$ \hat{a}^\mathrm{t} _\gamma = \sum _{\alpha \cdot \beta = \gamma} \hat{D}^\mathrm{t} _\alpha \hat{A}^\mathrm{t} _\beta. $$ Using that $\hat{a}^\mathrm{t} _\gamma = \hat{a}_{\gamma ^\mathrm{t}}$ we obtain
\begin{eqnarray} \hat{a}_\gamma & = & \sum _{\alpha \cdot \beta = \gamma ^\mathrm{t}} \hat{D} _{\alpha ^\mathrm{t}} \hat{A} _{\beta ^\mathrm{t} } \nonumber \\
& = & \sum _{\lambda \cdot \sigma = \gamma} \hat{D}_\sigma \hat{A}_\lambda. \nonumber \end{eqnarray} 
On the other hand, using the Sarason formula for the non-vacuum Taylor coefficients of $a$, 
\begin{eqnarray} \hat{a} _\gamma & = & -a(0) \left\langle b^\mathrm{t}, X^\gamma b^\mathrm{t}\right\rangle_b \nonumber \\
& = & -a(0) \left\langle \frac{1}{D(0)} B^\mathrm{t} \oplus \frac{-1}{D(0)} B(R) K^D _0, \sum _{\substack{\lambda \cdot \sigma = \gamma \\ \lambda \neq \emptyset}} \hat{D}_\sigma X^\lambda B^\mathrm{t} \oplus B(R) X^\gamma D^\mathrm{t}\right\rangle_b \nonumber \\
& = & -A(0) \sum _{\substack{\lambda \cdot \sigma = \gamma \\ \lambda \neq \emptyset}} \hat{D}_\sigma \left\langle B^\mathrm{t}, X^\lambda B^\mathrm{t}\right\rangle_B + A(0) \left\langle K^D _0, X^\gamma D ^\mathrm{t}\right\rangle_D \nonumber \\
& = &  \sum _{\substack{\lambda \cdot \sigma = \gamma \\ \lambda \neq \emptyset}} \hat{D} _\sigma \hat{A} _\lambda + A(0) \hat{D} _\gamma, \nonumber \end{eqnarray} which is the same as the previous formula for any $\gamma \neq \emptyset$. In the case of $\hat{a} _\emptyset = a(0)$,
\begin{align*} a(0) ^2 & =  \frac{1}{1 + \| b ^\mathrm{t} \| _b ^2},  && \mbox{by Lemma \ref{zeroform},} \\
& =   \frac{1}{ 1 + \left( \frac{\| B ^\mathrm{t} \| _B ^2 +1 - D(0) ^2}{D(0) ^2} \right),  } && \mbox{by Proposition \ref{bdecomp},}\\
& =  D(0) ^2 \frac{1}{1 + \| B ^\mathrm{t} \| _B ^2} = D(0) ^2 A(0) ^2, &&  \mbox{by Lemma \ref{zeroform} again.} \end{align*}
\end{remark}

\begin{remark} \label{Dzeroform}
Given $a,b,c,A,B,C$ and $D$ as above, one can readily check that 
$$ D(0) ^2 = \frac{1 + \| B ^\mathrm{t} \| ^2 _B}{1 + \| b ^\mathrm{t} \| ^2 _b} = \frac{1 - \| \boldsymbol{b} \| ^2 _b}{1 - \| \boldsymbol{B} \| ^2 _B}, $$ where recall that $B = bD$.

Also by Corollary \ref{SarSar}, if $b \in [\mathbb{H} ^\infty _d ] _1$ is non-CE and outer with Sarason outer function $a$, then $b$ is the Sarason outer function of $a$, so that by Theorem \ref{orthodecomp}, we have that 
\begin{align*} \mathscr{H} ^\mathrm{t} (c) &= \mathscr{H} ^\mathrm{t} (C) \oplus C(R) \mathscr{H} ^\mathrm{t} (D), \\
\mathscr{H} ^\mathrm{t} (b) &= \mathscr{H} ^\mathrm{t} (B) \oplus B(R) \mathscr{H} ^\mathrm{t} (D), \quad \quad  \mbox{and} \\
\mathscr{H} ^\mathrm{t} (a) &= \mathscr{H} ^\mathrm{t} (A) \oplus A(R) \mathscr{H} ^\mathrm{t} (D). \end{align*}
Moreover, one can verify, in this case, that for any $x \in \mathscr{H} ^\mathrm{t} (B)$ and $y \in \mathscr{H} ^\mathrm{t} (D)$, the linear map
$$ x \oplus B(R) y \ \stackrel{U}{\mapsto} \ H(R)^* x \oplus -A(R) y; \quad \quad H(R) := B(R) A(R) ^{-1}, $$ is an onto isometry that interwines $X ^{(b)} = L^* | _{\mathscr{H} ^\mathrm{t} (b)}$ with $X^{(a)} = L^* | _{\mathscr{H} ^\mathrm{t} (a)}$. 
\end{remark}

\section{Smirnov column--inner pairs and Toeplitz factorization}

We are now sufficiently prepared to prove one of our main results which shows how the open Question \ref{openQ} on factorizability of positive semi-definite left Toeplitz operators with factorable minorants is equivalent to several corresponding questions regarding de Branges--Rovnyak spaces for non-CE $b$, the relationship between a non-CE $b$ and its outer NC Sarason function and the graphs of left Smirnov multipliers and their adjoints:

\begin{thm} \label{main1}
Given a non-column--extreme left multiplier $b \in [ \mathbb{H} ^\infty _d ] _1$ with outer Sarason function $a \in [ \mathbb{H} ^\infty _d ] _1$, let $ \left( \begin{smallmatrix} B \\ A \end{smallmatrix} \right) D = CD$ be the inner--outer factorization of $c = \left( \begin{smallmatrix} b \\ a \end{smallmatrix} \right)$. The following are equivalent:
\begin{enumerate}
\item[\emph{(i)}] $c$ is inner so that $D \equiv 1$. That is $a,b$ are a column--inner Smirnov pair.
\item[\emph{(ii)}] If $X := L^* | _{\mathscr{H} ^\mathrm{t} (b)}$, then $X^*$ is pure.
\item[\emph{(iii)}] $X^*$ obeys the weak purity condition $\| X^{(n)} b ^\mathrm{t} \|_b ^2 \rightarrow 0$.
\item[\emph{(iv)}] The NC Szeg\"o kernels, or the free polynomials are dense in $\mathscr{H} ^\mathrm{t} (b)$.
\item[\emph{(v)}] $\mathscr{H} ^\mathrm{t} (b)$ is the domain of the adjoint of the closed, densely--defined Smirnov multiplier $H(R) = b(R) a(R) ^{-1}$. 
\end{enumerate}
\end{thm}

\begin{proof}
Equivalence of (i), (ii) and (iii) was proven in Corollary \ref{CEcolumn}.

(i)$\Leftrightarrow$(iv): Since $b$ is non-CE, $\mathscr{H} ^\mathrm{t} (b)$ always contains $\mathbb{C} \{ \mathbb{\mathfrak{z}} \} $ by \cite[Corollary 6.14]{JM-freeCE}. Moreover, $b^\mathrm{t} \in \mathscr{H} ^\mathrm{t} (b)$ by \cite[Theorem 4.2]{JM-freeCE} and $K_0 ^b = 1 - b^\mathrm{t} \overline{b(0)}$ so that $1 \in \mathscr{H} ^\mathrm{t} (b)$ as well. Alternatively, Theorem \ref{orthodecomp} implies that $\mathscr{H} ^\mathrm{t} (b) = \mathscr{H} ^\mathrm{t} (B) \oplus B(R) \mathscr{H} ^\mathrm{t} (D)$ and $\mathscr{H} ^\mathrm{t} (B) = \mathrm{Dom} \, H(R) ^*$, equipped with the graph--norm, where $H(R) = B(R) A(R) ^{-1}$ is a closed right Smirnov multiplier by \cite[Theorem 4.17]{JM-freeSmirnov}. By \cite[Corollary 3.13]{JM-freeSmirnov}, the free polynomials and the NC Szeg\"o kernels are both cores for $H(R)^*$, hence dense in $\mathscr{H} ^\mathrm{t} (B) \subseteq \mathscr{H} ^\mathrm{t} (b)$. It follows that the free polynomials/ NC Szeg\"o kernels are dense in $\mathscr{H} ^\mathrm{t} (b)$ if and only if $\mathscr{H} ^\mathrm{t} (B) = \mathscr{H} ^\mathrm{t} (b)$ in which case $D\equiv 1$ and $B = b$. This also establishes (iv)$\Leftrightarrow$(v). 
\end{proof}

In one variable, each of the above conditions is true. For example, here is an argument which proves that $c=C$ is inner using Theorem \ref{orthodecomp} and Corollary \ref{AstarD}.
\begin{prop}
    If $d=1$ then $c=C$ is inner.
\end{prop}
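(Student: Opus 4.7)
The plan is to verify condition (i) of Theorem \ref{main1} directly, by exploiting the fact that when $d=1$ the Sarason outer function of the paper coincides with the classical outer function produced by Szeg\"o's theorem. In the one-variable setting the left and right free shifts coincide with the ordinary shift $S = M_z$ on $H^2$, the letter-reversal map is the identity, $\mult = H^\infty$, and column-extremality of $b \in [H^\infty]_1$ reduces to classical extremality, so non-CE means non-extreme in the usual sense.

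First I would invoke the classical Szeg\"o--Kolmogoroff--Kre\u\i n theorem: since $b$ is non-extreme, $1 - |b|^2$ is log-integrable on $\partial\D$, and so there exists a unique outer $\tilde a \in [H^\infty]_1$ with $\tilde a(0) > 0$ and $|\tilde a(\zeta)|^2 = 1 - |b(\zeta)|^2$ almost everywhere on $\partial\D$. Then the Brown--Halmos Toeplitz calculus yields
\[
    \tilde a(S)^* \tilde a(S) = T_{|\tilde a|^2} = T_{1 - |b|^2} = I - b(S)^* b(S),
\]
so $\tilde a(S)^* \tilde a(S)$ is a factorizable Toeplitz minorant of $I - b(S)^* b(S)$ which in fact equals it.

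Next I would appeal to Corollary \ref{Sarmaxouter}, which characterizes the Sarason outer function $a$ of $b$ as the (essentially unique, normalized by $a(0) > 0$) outer multiplier for which $a(R)^*a(R)$ is the maximal factorizable Toeplitz minorant of $I - b(R)^* b(R)$. Since $\tilde a$ already saturates the inequality, it must be maximal, and hence $a = \tilde a$. In particular $b(S)^* b(S) + a(S)^* a(S) = I$, so the column multiplier $c(S) = \bsm b(S) \\ a(S) \esm : H^2 \to H^2 \otimes \C^2$ is an isometry, i.e., $c$ is inner. Finally, in the inner--outer factorization $c = CD$ with $D$ outer and $D(0) > 0$, innerness of $c$ combined with the maximum modulus principle forces $D \equiv 1$, so $c = C$.

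There is no real obstacle here beyond this packaging: the entire content is that the abstract Sarason outer function constructed via the de Branges--Rovnyak colligation agrees with the classical Szeg\"o outer factor, and this identification is built into the maximality statement of Corollary \ref{Sarmaxouter}. As an alternative route one could instead verify condition (iv) of Theorem \ref{main1} by citing Sarason's classical theorem that the polynomials are dense in $\scr{H}(b)$ when $b$ is non-extreme in one variable, but the Szeg\"o approach above is the most direct.
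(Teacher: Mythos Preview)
Your proof is correct, but it takes a different route from the paper's. You invoke the classical Szeg\"o--Kolmogoroff--Kre\u\i n theorem to produce an outer $\tilde a$ with $|\tilde a|^2 = 1 - |b|^2$ almost everywhere, and then identify $\tilde a$ with the Sarason function $a$ via the maximality in Corollary \ref{Sarmaxouter}; this immediately gives $c^*c = I$. The paper instead argues entirely within its own framework: it takes any nonzero $x \in \scr{H}(D)$, observes that in one variable $\scr{H}(D)$ is $S^*$-invariant so that $A(S)^* x \in \scr{H}(D) \cap \scr{M}(A^*)$, and then uses Corollary \ref{AstarD} (which says this intersection is $\{0\}$) together with outerness of $A$ to force $x = 0$, whence $\scr{H}(D) = \{0\}$ and $D \equiv 1$.

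Your approach is shorter and more classical, but it imports exactly the analytic input (Szeg\"o's theorem, equivalently Theorem \ref{Opszego}) whose NC analogue is the open Question \ref{openQ}, so it does not shed light on the multivariable situation. The paper's argument is chosen precisely to isolate the one-variable phenomenon that fails for $d>1$: the subspace $\scr{H}^\mrt(D)$ is $L$-coinvariant but not $R$-coinvariant, so one cannot conclude $A(R)^* x \in \scr{H}^\mrt(D)$, and the paper explicitly points this out immediately after the proof. Your alternative suggestion of verifying condition (iv) via Sarason's density theorem would work too, but again relies on one-variable facts with no known NC counterpart.
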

\begin{proof}
If $x \in \mathscr{H} (D)$ is any non-zero vector then 
$$ A(S) ^* x \in \mathscr{H} (D) \cap \mathscr{M} (A^*), $$ since $\mathscr{H} (D)$ is $S^*-$invariant. Corollary \ref{AstarD} then implies that $A(S) ^* x =0$ so that $x=0$ since $A$ is outer. Hence $\mathscr{H} (D) = \{ 0 \}$, $D =1$ and $c = C$ is inner.
\end{proof}
When $d>1$ the above does not work since $A(R)^* x$ need not belong to $\mathscr{H} ^\mathrm{t} (D)$ as $\mathscr{H} ^\mathrm{t} (D)$ is $L-$co-invariant but generally not $R-$co-invariant.

\begin{cor} \label{answer}
The following two statements are equivalent:
\begin{itemize}
    \item[\emph{(i)}] The equivalent conditions of Theorem \ref{main1} hold for every non-column--extreme $b \in [\mathbb{H} ^\infty _d ] _1$.

    \item[\emph{(ii)}] A positive semi-definite left Toeplitz operator, $T \in \mathscr{L} (\mathbb{H} ^2 _d )$, is factorable if and only if it has a non-trivial, positive semi-definite and factorable left Toeplitz minorant.

\end{itemize}
\end{cor}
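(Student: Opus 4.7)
The plan is to verify the two implications separately, leveraging the Sarason-function machinery developed in this section and the NC analogue of the Aleksandrov--Clark correspondence outlined in the introduction.

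For (ii)$\Rightarrow$(i), I would fix a non-CE $b \in [\mult]_1$ and apply the hypothesis to the positive semi-definite left Toeplitz operator $T := I - b(R)^*b(R)$; this operator is left Toeplitz because each $b(R)$ commutes with every $L_j$, so $L_j^* b(R)^* b(R) L_k = \delta_{j,k}\, b(R)^* b(R)$. Since $b$ is non-CE, its Sarason outer function $a$ satisfies $a(0) > 0$, and the operator $a(R)^* a(R) \leq T$ is a non-trivial factorizable minorant of $T$. Hypothesis (ii) then produces an outer $\tilde a \in [\mult]_1$ with $\tilde a(R)^* \tilde a(R) = T$; since this is \emph{a fortiori} a factorizable minorant of $T$, the maximality of the Sarason outer function (Corollary \ref{Sarmaxouter}) forces $a(R)^* a(R) = \tilde a(R)^* \tilde a(R) = I - b(R)^* b(R)$. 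Equivalently $c(R)^* c(R) = I$ for $c = \bsm b \\ a \esm$, so $c$ is inner, which is exactly condition (i) of Theorem \ref{main1}.

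For (i)$\Rightarrow$(ii), I would take a positive semi-definite left Toeplitz $T \in \scr{L}(\hardy)$ with a non-trivial factorizable left Toeplitz minorant and reduce its factorizability to that of $I - b(R)^* b(R)$ for a suitable non-CE $b \in [\mult]_1$. The bridge is the NC Aleksandrov--Clark correspondence invoked in the introduction: to each positive left Toeplitz $T$ one associates a contractive NC Clark symbol $b \in [\mult]_1$ intertwining $T$ with $I - b(R)^* b(R)$ via (the NC analogue of) the formula $T = (I - b(R)^*)^{-1}(I - b(R)^* b(R))(I - b(R))^{-1}$. The hypothesis that $T$ possesses a non-trivial factorizable minorant transfers to $I - b(R)^* b(R)$, so $b$ is non-inner and, by Popescu's factorization (Theorem \ref{Pop1}) together with Corollary \ref{Sarmaxouter}, in fact non-CE. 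Hypothesis (i) then yields, via Theorem \ref{main1}, that $c = \bsm b \\ a \esm$ is inner, so $I - b(R)^* b(R) = a(R)^* a(R)$. Running the Clark formula in reverse, the outer factor of $T$ is realized as the closure of the Smirnov product $a(R)(I - b(R))^{-1}$, which produces the desired factorization.

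The main obstacle is the careful formulation of this NC Clark reduction and the precise transfer of factorizable minorants between $T$ and $I - b(R)^* b(R)$. Unlike the one-variable case, the intertwining operators $(I - b(R))^{\pm 1}$ are in general unbounded closed right Smirnov multipliers, so one has to work with their graphs and cores rather than bounded resolvents, and verify that outer right multipliers pull back to outer right multipliers under the correspondence. I would appeal to the NC Clark / free Cauchy transform theory referenced later in the paper to legitimize this step; once it is in place, both implications collapse to the Sarason-outer-function machinery of this section and the maximality property of Corollary \ref{Sarmaxouter}.
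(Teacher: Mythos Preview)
Your proposal is correct and follows the paper's approach. The paper dispatches your ``main obstacle'' cleanly: since $T$ is \emph{bounded}, the associated NC measure satisfies $\mu_T \leq \|T\|\, m$ and is therefore absolutely continuous, so the NC Fatou theorem of \cite{JM-ncFatou} yields the identity $T = (I - b_T(R)^*)^{-1}(I - b_T(R)^* b_T(R))(I - b_T(R))^{-1}$ directly as an equality of bounded operators; the transfer of factorizability and of factorizable minorants between $T$ and the numerator is then immediate by conjugating with the bounded right multiplier $I - b_T(R)$, and the outer factor $a(R)(I - b_T(R))^{-1}$ is automatically bounded because its ``square'' equals $T$---no graph or core arguments are needed.
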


Before proceeding with the proof, it will be convenient to recall the concept of a \emph{positive NC measure}. A positive NC measure, $\mu$, is any positive linear functional on the \emph{free disk system}, $\mathscr{A} _d$,
$$ \mathscr{A} _d := \left( \mathbb{A} _d  + \mathbb{A} _d ^* \right) ^{-\| \cdot \|}, \quad \mbox{where} \quad \mathbb{A} _d := \mathrm{Alg} \{ I , L_1 , \cdots , L_d \} ^{-\| \cdot \|}, $$ is the \emph{free disk algebra}, see \cite{JM-ncld,JM-ncFatou,JM-freeCE,JM-freeAC} for details. We denote the set of all positive NC measures by $ \left( \mathscr{A} _d   \right) ^\dag _+$. When $d=1$, $\mathscr{A} _d \simeq \mathscr{C} (\partial \mathbb{D} )$ can be identified with the commutative $C^*-$algebra of continuous functions on the complex unit circle, $\partial \mathbb{D}$ and then the Riesz--Markov theorem identifies any positive NC measure $\mu \in ( \mathscr{A} _1 ^\dag ) _+$ with a positive, finite and regular Borel measure on $\partial \mathbb{D}$. The appropriate analogue of normalized Lebesgue measure is the so-called \emph{vacuum state}, $m$,
$$ m(L^\omega ) := \left\langle 1, L^\omega 1\right\rangle_{\mathbb{H} ^2} = \delta _{\omega, \emptyset}. $$ Here, if $d=1$,
$$ m(S^k ) = \left\langle 1, S^k1\right\rangle_{H^2} = \int _{\partial \mathbb{D}} \zeta ^k \check{m} (d\zeta ) = \delta _{k, 0}, $$ is the unique positive linear functional corresponding to normalized Lebesgue measure, $\check{m}$, on the circle. If $T \geq 0$ is any positive semi-definite left Toeplitz operator, observe that the linear functional 
$$ \mu _T (L^\omega ) := \left\langle 1, TL^\omega 1\right\rangle_{\mathbb{H} ^2}, $$ extends to a positive NC measure on the free disk system, in which case $T$ can be thought of as the \emph{NC Radon--Nikodym derivative} of $\mu _T$ with respect to $m$. 

\begin{proof}

\noindent (i) $\Leftrightarrow$ (ii): To connect Toeplitz factorization to the theory we have developed for non-CE multipliers, define the positive NC measure $\mu _T \in  \left( \mathscr{A} _d   \right) ^\dag _+$ by
$$ \mu _T (L ^\omega ) := \left\langle 1, T L^\omega 1\right\rangle_{\mathbb{H} ^2}. $$ This is an absolutely continuous NC measure and $T$ is its NC Radon--Nikodym derivative. By \cite{JM-ncFatou}, since $T$ is bounded, there is a $b_T \in [ \mathbb{H} ^\infty _d ] _1$ so that 
$$ T = (I - b_T (R) ^* ) ^{-1} (I - b_T (R) ^* b_T (R) ) (I - b_T (R) ) ^{-1} \geq 0. $$ In more detail, since $T$ is bounded, $\mu _T \leq \| T \| m$, where recall that $m(L^\omega ) := \left\langle 1, L^\omega 1\right\rangle_{\mathbb{H} ^2}$ is NC Lebesgue measure \cite{JM-ncld}. Hence $\mu _T$ is absolutely continuous in the sense of the NC Lebesgue decomposition of \cite{JM-ncld,JM-ncFatou}. By the NC Fatou theorem of \cite{JM-ncFatou}, if $\mu _T = \mu _{b_T}$ is the NC Clark measure of $b_T \in [\mathbb{H} ^\infty _d ]_1$, 
$$ \mu _T (p(L) ^* q(L) ) = \left\langle p, Tq\right\rangle_{\mathbb{H} ^2} = \left\langle \widetilde{T}^{\frac{1}{2}} p, \widetilde{T} ^{\frac{1}{2}} q\right\rangle _{\mathbb{H} ^2}, $$ where 
$\widetilde{T}$ is the closed, positive semi-definite left Toeplitz operator obtained as the strong resolvent limit of the bounded and positive semi-definite left Toeplitz operators
$$ T_r := (I -b_T (rR) ^*) ^{-1}  (I - b_T (r R) ^* b_T (r R) ) (I - b_T (r R) ) ^{-1}, $$ as $r \uparrow 1$. The above equation shows that $T$ and $\widetilde{T}$ define the same bounded and positive semi-definite quadratic forms, so that $T = \widetilde{T}$ and $\widetilde{T}$ is bounded. Strong resolvent convergence of a bounded net of positive semi-definite operators to a bounded and positive semi-definite operator is equivalent to convergence in the strong operator topology \cite[Theorem VIII.20]{RnS1}. Hence $T_r \stackrel{SOT}{\rightarrow} T$. Finally, $b_T (rR) \stackrel{SOT-*}{\rightarrow} b_T (R)$ and the formula for $T$ follows.

Since we can assume that $T \neq 0$, $b_T (R)$ is not inner and it is clear that $T$ has a factorable minorant/ is factorable if and only if the numerator $I-b_T(R) ^* b_T (R)$ has factorable minorant/ is factorable. In particular, if such a $T$ with a factorable minorant is always factorable, this is equivalent to saying that any non-CE $b \in [ \mathbb{H} ^\infty _d ] _1$ is such that $c = \left( \begin{smallmatrix} b \\ a \end{smallmatrix} \right)$, where $a$ is the Sarason function of $b$, is always inner. 
\end{proof}

\section{Application to rational multipliers}
\label{sect:NCratmult}

In this section we will establish a Fej\'er--Riesz theorem for NC rational multipliers of Fock space. The classical Fejer-Riesz theorem states that if a trigonometric polynomial
\[ p(e^{i\theta}) = \sum_{n=-N}^N \hat{p} _ne^{in\theta}  \]
is nonnegative for all $\theta\in[0, 2\pi]$, then there is a single analytic polynomial $q(z) =\sum_{n=0}^N \hat{q}_n z^n$, of the same degree as $p$, so that
\[
  p(e^{i\theta})  = |q(e^{i\theta})|^2
\]
for all $\theta\in[0, 2\pi]$. As a trivial consequence, there is a similar factorization in the rational case: namely, if $\mathfrak{r} (z) = \frac{p_1(z)}{p_2(z)}$ is a rational function, regular in $|z|\leq 1$, and
\[
  \mathfrak{r} (e^{i\theta}) +\overline{ \mathfrak{r} (e^{i\theta}) } \geq 0
\]
for all $\theta$, then there is another rational function $\mathfrak{q}$, regular in $|z|\leq 1$ and with degree at most the degree of $\mathfrak{r}$, such that
\begin{equation}\label{eqn:classical-rat-FR}
  \mathfrak{r} (e^{i\theta}) + \overline{\mathfrak{r} (e^{i\theta}) }  = |\mathfrak{q} (e^{i\theta})|^2
\end{equation}
for all $\theta$. Indeed, suppose $\mathrm{deg} \, p_1, p_2\leq N$. Since, by hypothesis,
\[
  \frac{p_1(e^{i\theta})}{p_2(e^{i\theta})} +  \frac{\overline{p_1(e^{i\theta})}}{\overline{p_2 (e^{i\theta} )}} \geq 0,
\]
we can clear the denominators to obtain a trigonometric polynomial of degree at most $N$, which is positive on the unit circle, and hence factors:
\[
  \overline{p_2 (e^{i\theta}) } p_1(e^{i\theta}) + \overline{p_1(e^{i\theta}) } p_2(e^{i\theta}) =: |\widetilde{p}(e^{i\theta})|^2,
\]
where $\widetilde{p} \in \mathbb{C} [ e^{i\theta} ]$ is an analytic polynomial of degree at most $N$. Dividing by $|p_2 | ^2$ again, and setting $\mathfrak{q} =  \widetilde{p}/p_2$, we arrive at (\ref{eqn:classical-rat-FR}). We see immediately from the calculation that $\mathfrak{q}$ is regular everywhere that $\mathfrak{r}$ is, and the degree of $\mathfrak{q}$ (that is, the maximum of the degrees of numerator and denominator) does not exceed the degree of $p$. 

Relatedly, if now $\mathfrak{b}$ is a rational function regular in $|z|\leq 1$ and bounded by $1$ there, we can consider the function
\[
  1-|\mathfrak{b} (e^{i\theta})|^2 \geq 0
\]
for $\theta\in [0, 2\pi]$. If $\mathfrak{b}$ is a rational inner function (a finite Blaschke product) then this expression is identically $0$. Otherwise, we can apply the same argument as above to obtain a rational function $\mathfrak{a}$ (in fact an outer function, that is, having no zeroes in $|z|<1$), of degree not exceeding the degree of $\mathfrak{b}$, and regular everywhere that $\mathfrak{b}$ is, so that
\begin{equation}\label{eqn:classical-pythagorean-mate}
  1-|\mathfrak{b} (e^{i\theta)}|^2 = |\mathfrak{a} (e^{i\theta})|^2
\end{equation}
for all $\theta$. The function $\mathfrak{a}$ is sometimes called the {\em pythagorean mate} of $\mathfrak{b}$. Thus we observe a dichotomy: either $\mathfrak{b}$ is inner, or $1-\mathfrak{b} ^* \mathfrak{b}$ ``factors completely'' as $\mathfrak{a} ^* \mathfrak{a}$. In the latter case, it follows from Szeg\"o's theorem that $\log (1-|\mathfrak{b}|^2)$ is integrable on the unit circle, and hence $\mathfrak{b}$ is not an extreme point of the unit ball of $H^\infty(\mathbb D)$. (This dichotomy breaks down with the rationality assumption, there exist non-inner $b$ so that $1-|b|^2$ is not log-integrable, and hence $1-b^*b$ is nonzero, but does not factor. Recall that such a $b$ is necessarily a non-inner extreme point of $[H^\infty ]_1$.)

The Fej\'er--Riesz theorem has been extended to Fock space by G. Popescu \cite[Theorem 1.6]{Pop-multi}:

\begin{thm*}[NC Fej\'er--Riesz. Popescu]
Let $T := \mathrm{Re} \, p (R) \geq 0$ be a positive semi-definite left Toeplitz operator on $\mathbb{H} ^2 _d$, where $p \in \mathbb{C} \{ \mathbb{\mathfrak{z}} \} $ and $deg \ p=N$. Then there is an outer free polynomial $q \in \mathbb{C} \{ \mathbb{\mathfrak{z}} \} $ with $deg \ q =N$ so that $q(R) ^* q(R) =T$.
\end{thm*}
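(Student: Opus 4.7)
The plan is to combine Popescu's factorization theorem for strictly positive left Toeplitz operators (Theorem \ref{Pop1}) with a perturbation argument, using the ``banded'' structure of $\mathrm{Re}\, p(R)$ to force the polynomial degree bound on the outer factor.

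For each $\delta > 0$, set $T _\delta := T + \delta I = \mathrm{Re}\,(p(R) + \delta I)$, so that $T _\delta \geq \delta I > 0$ is strictly positive. Theorem \ref{Pop1} then yields an outer $h _\delta \in \mult$ (normalized so that $\hat h _{\delta, \emptyset} > 0$) with $h _\delta (R) ^* h _\delta (R) = T _\delta$. The crucial step is to verify that $h _\delta$ is in fact a free polynomial of degree at most $N$. Because $\deg p = N$, direct computation yields
\[
  \mu _{T _\delta} (L ^\om ) \ = \ \ip{1}{T _\delta e_\om }_{\bH ^2} \ = \ 0 \qquad \text{for all } |\om | > N,
\]
so $T _\delta$ is ``$N$--banded'' in the word--length grading, and similarly $\ip{e _\tau}{T _\delta e _\sigma}_{\bH ^2} = 0$ whenever one of $\tau, \sigma$ is a prefix of the other with length difference exceeding $N$. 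Tracking this bandedness through the NC Wold decomposition proof of Theorem \ref{Pop1}---define the row isometry $V = (V _1, \ldots , V _d)$ on $\ov{\nbran \sqrt{T _\delta}}$ by $V _j \sqrt{T _\delta} := \sqrt{T _\delta} L _j$, check $V$ is pure (since $T _\delta \geq \delta I$), identify $V \simeq L$ via a co-isometry $W^*$, and set $h _\delta (R) := W ^* \sqrt{T _\delta}$---the bandedness restricts the wandering subspace of $V$ to the $\sqrt{T _\delta}$--image of polynomials of degree $\leq N$, which forces $h _\delta$ to be a polynomial of degree $\leq N$.

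Now let $\delta \to 0 ^+$. The $h _\delta$ all live in the finite--dimensional space of free polynomials of degree $\leq N$ and are uniformly bounded in operator norm ($\| h _\delta (R) \| ^2 = \| T _\delta \| \leq \| T \| + 1$ for $\delta \leq 1$). Extract a convergent subsequence $h _{\delta _k} \to q$, where $q$ is a free polynomial of degree $\leq N$ satisfying $q(R) ^* q(R) = T$. Matching the top--length moments for $|\om | = N$ (where only the $\rho = \emptyset$ term in $\sum _\rho \ov{\hat q _{\rho \cdot \om ^\mrt}} \hat q _\rho = \ip{1}{T e_\om}_{\bH ^2}$ can contribute) then forces the top--degree coefficients of $q$ to be nonzero multiples of the corresponding non-vanishing coefficients of $p$, giving $\hat q _\emptyset > 0$ and $\deg q = N$ exactly. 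If $q$ is not already outer, replace it by its NC outer factor $F$ from the inner--outer factorization of \cite[Theorem 4.2]{Pop-factor}; one verifies that $F$ remains a polynomial of degree $\leq N$ and still satisfies $F(R) ^* F(R) = T$.

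The main obstacle is the degree bound: establishing that the abstract outer factor produced by Theorem \ref{Pop1} is a free polynomial of degree $\leq N$ when $T$ is polynomial--Toeplitz. This requires tracing the bandedness of $T _\delta$ through Popescu's NC Wold decomposition construction rather than invoking Theorem \ref{Pop1} as a black box; an equivalent alternative is to reformulate the problem as a truncated NC moment problem on the finite--dimensional space of free polynomials of degree $\leq N$ and verify that the corresponding (word--length--filtered) Cholesky factor is in fact a polynomial multiplier of the Fock space.
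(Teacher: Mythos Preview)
The paper does not give its own proof of this statement; it is quoted as Popescu's result with a citation to \cite[Theorem 1.6]{Pop-multi}. So there is nothing to compare against on the paper's side.

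Regarding your proposal itself: the perturbation-and-limit scaffolding is sound, but the argument has a genuine gap at exactly the point you flag as the ``main obstacle.'' You assert that tracing the $N$-bandedness of $T_\delta$ through the NC Wold decomposition will confine the wandering subspace to the $\sqrt{T_\delta}$-image of polynomials of degree $\leq N$, and hence force $h_\delta$ to be a polynomial of degree $\leq N$---but you do not carry this out, and it is not routine. Bandedness of $T_\delta$ in the word-length grading says nothing direct about bandedness of $\sqrt{T_\delta}$ or of the intertwining co-isometry $W^*$; one really has to exhibit a concrete finite-dimensional invariant structure, and your description of how the wandering subspace interacts with the filtration is too vague to constitute a proof. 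This is the entire content of the theorem, and you have essentially restated it rather than proved it.

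There is a second, related gap in the cleanup step. If the limit $q$ is not outer and you pass to its inner--outer factorization $q=\Theta\cdot F$, you claim ``one verifies that $F$ remains a polynomial of degree $\leq N$.'' In one variable this is clear (the inner part of a polynomial is a finite Blaschke product, and division drops degree), but in the free setting the inner factor of a free polynomial need not be so tame, and you give no argument. This step is again precisely the degree-control issue you have not resolved. The moment-matching step for $|\om|=N$ is fine once $\deg q\leq N$ is known, but that is the hypothesis you have yet to establish.
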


However, since NC rational functions are not, in general, expressible as quotients of polynomials, the simple arguments just given in the one-variable case do not generalize to the NC setting. Nonetheless, it turns out there are NC versions of (\ref{eqn:classical-rat-FR}) and (\ref{eqn:classical-pythagorean-mate}) available, though the proof will require completely different methods. In fact, (\ref{eqn:classical-rat-FR}) can be more or less reduced to (\ref{eqn:classical-pythagorean-mate}), by means of the Cayley transform, and (\ref{eqn:classical-pythagorean-mate}) can be proven directly by constructing an appropriate {\em realization} of $a$ from a suitable realization of $b$ as in Section \ref{sect:CE}. In particular, the realizations we work with arise from the de Branges--Rovnyak style functional model, and our various claims can be established by analysis of this model.

In particular, we replace ``degree'' with ``size of a minimal Fornasini--Marchesini realization'' (an analogue of the MacMillan degree), and are able to prove an NC version of (\ref{eqn:classical-pythagorean-mate}) with sharp degree bounds, and an NC version of (\ref{eqn:classical-rat-FR}) with a reasonable degree bound (but we do not know if it is sharp). Likewise in the NC version of (\ref{eqn:classical-pythagorean-mate}) we are able to prove that $\mathfrak{a}$ is regular everywhere that $\mathfrak{b}$ is, while in the NC version of (\ref{eqn:classical-rat-FR}) we settle for a somewhat weaker claim (that the $\mathfrak{q}$ in the right-hand side will at least be regular across the boundary of the row ball); we do not know if it can in fact be chosen to be regular everywhere that $\mathfrak{r}$ is.

\subsection{Preliminaries on NC rational functions}

Recall that a complex NC rational expression is any valid combination of the several NC variables $\mathfrak{z} _1 , \cdots , \mathfrak{z} _d$, the complex scalars, $\mathbb{C}$, the operations $+ , \cdot , ^{-1}$ and parentheses $\left( , \right)$ with domain
$$\mathrm{Dom} \, \mathrm{r} = \bigsqcup _{n=1} ^\infty \mathrm{Dom} _n \, \mathrm{r}, \quad  \mathrm{Dom} _n \,  \mathrm{r} := \bigsqcup _{n=1} ^\infty \left\{ \left. X = (X _1 , \cdots , X_d ) \in \mathbb{C} ^{n\times n} \otimes \mathbb{C} ^{1 \times d}  \right| \ \mathrm{r} (X) \ \mbox{is defined.} \right\}. $$ We will use the notation $\mathbb{C} ^d _n := \mathbb{C} ^{n\times n} \otimes \mathbb{C} ^{1\times d}$ for a row $d-$tuple of complex $n\times n$ matrices. Such an NC rational expression is said to be \emph{valid} if its domain is not empty. An NC rational function, $\mathfrak{r}$, is then the equivalence class of valid NC rational expressions with respect to the relation $\mathrm{r} _1 \equiv \mathrm{r} _2$ if $\mathrm{r} _1 (X) = \mathrm{r} _2 (X)$ for all $X \in \mathrm{Dom} \, \mathrm{r} _1 \bigcap \mathrm{Dom} \, \mathrm{r} _2$ (this intersection is always non-empty by \cite[Footnote, page 52]{KVV-ncratdiff}), with domain $\mathrm{Dom} \, \mathfrak{r} := \cup _{\mathrm{r} \in \mathfrak{r}} \mathrm{Dom} \, \mathrm{r}$, and we write $\mathfrak{r} (Z) := \mathrm{r} (Z)$ if $Z \in \mathrm{Dom} \, \mathrm{r}$ and $\mathrm{r} \in \mathfrak{r}$.  Any NC rational function in $d-$variables, $\mathfrak{r}$, which is regular at $0$ has a unique (up to joint similarity) \emph{minimal Fornasini--Marchesini (FM) realization}. Namely, there is a quadruple $(A, B,C,D )$ with $A \in \mathbb{C} ^d _n$, $B = \left( \begin{smallmatrix} B_1 \\ \vdots \\ B_d \end{smallmatrix} \right) \in \mathbb{C} ^n \otimes \mathbb{C} ^d$, $C \in \mathbb{C} ^{1\times n}$ and $D \in \mathbb{C}$ so that for any $X \in \mathbb{C} ^d _m \cap \mathrm{Dom} \, \mathfrak{r}$,
$$ \mathfrak{r} (X) = D I_m + C \otimes I_m L_A (X) ^{-1}  B \otimes X; \quad \quad L_A (X) := I_n \otimes I_m - \sum A_j \otimes X_j, $$ where $B \otimes X := B_1 \otimes X_1 + \cdots + B_d \otimes X_d$ and this realization is minimal in the sense that $n$ is as small as possible.  We will also sometimes write $A\otimes X$ in place of $\sum A_j \otimes X_j$. Here, $L_A (\cdot ) $ is called a (monic, affine) \emph{linear pencil}. A realization is minimal if and only if it is both \emph{controllable}:
$$ \bigvee_{\substack{\omega \in \mathbb{F} ^d \\ 1 \leq j \leq d}} A^{\omega} b_j = \mathbb{C} ^{n},$$  and \emph{observable} 
$$ \bigvee A^{*\omega} C ^* = \mathbb{C} ^n, $$ see \emph{e.g.} \cite[Subsection 3.1.2]{HMS-realize}. Minimal realizations are unique up to joint similarity \cite[Theorem 2.1]{Ball-control}. If $(A,B,C,D)$ is a minimal FM realization of $\mathfrak{r}$ and $A \in \mathbb{C} ^d _n$, we will say that the \emph{minimal FM realization size} of $\mathfrak{r}$ is $n$ and write $\mathrm{dim} _{FM} \, \mathfrak{r} = n$. We will also have occasion to use \emph{descriptor realizations}. Any NC rational function, $\mathfrak{r}$, which is \emph{regular at $0$}, \emph{i.e.} so that $0 = (0 , \cdots , 0) \in \mathrm{Dom} \, \mathfrak{r}$, has a minimal descriptor realization $(A,b,c) \in \mathbb{C} ^d _m \times \mathbb{C} ^m \times \mathbb{C} ^m$: For any $Z \in \mathrm{dom} _n \, \mathfrak{r} := \mathrm{Dom} \mathfrak{r} \, \cap \, \mathbb{C} ^d _n$, 
$$ \mathfrak{r} (Z) = b^* \otimes I_n L_A (Z) ^{-1} c \otimes I_n, $$ see \cite{Volcic}. Minimality again means that $n$ is as small as possible, and this is equivalent to $c$ being $A-$cyclic (the realization is controllable) and $b$ being $A^*-$cyclic (the realization is observable), and again, minimal descriptor realizations are unique up to joint similarity. We will sometimes write $\mathrm{dim} \, \mathfrak{r} = m$ for the \emph{minimal descriptor realization size} of $\mathfrak{r}$. The following lemma shows how minimal FM realizations can be constructed from minimal descriptor realizations.

\begin{lemma} \label{FMdescriptor}
Let $(A,b,c) \in \mathbb{C} ^d _n \times \mathbb{C} ^n \times \mathbb{C} ^n$ be a descriptor realization of an NC rational function $\mathfrak{r}$ with $0 \in \mathrm{Dom} \, \mathfrak{r}$. Let $\mathscr{M} _0 := \bigvee _{\omega \neq \emptyset} A ^\omega c \subseteq \mathbb{C} ^n$ with projector $Q_0$ and define $(A^{(0)} , B^{(0)} , C^{(0)} , D^{(0)})$ by
$$ A _j ^{(0)} := A_j | _{\mathscr{M} _0}, \quad B ^{(0)} _j:= A_j c, \quad C ^{(0)} := (Q_0 b) ^* \quad \mbox{and} \quad D ^{(0)} = \mathfrak{r} (0). $$ This is an FM realization of $\mathfrak{r}$ and 
it is minimal if $(A,b,c)$ is minimal. In particular, $ \mathrm{dim} \, \mathfrak{r} -1 \leq \mathrm{dim} _{FM} \, \mathfrak{r} \leq \mathrm{dim} \, \mathfrak{r}$.
\end{lemma}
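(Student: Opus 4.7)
The plan is to (i) verify that the candidate quadruple produces the correct NC Taylor coefficients, (ii) check that the restrictions preserve controllability and observability, and (iii) read off the dimension bound from $\mathcal{M}_0 + \mathbb{C}c = \mathbb{C}^n$.

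For (i), I would expand the descriptor realization via the Neumann series $L_A(Z)^{-1} = \sum_{\omega \in \mathbb{F}^d} A^\omega \otimes Z^\omega$, obtaining $\mathfrak{r}(Z) = \sum_\omega (b^* A^\omega c)\, Z^\omega$, so the NC Taylor coefficients are $\hat r_\omega = b^* A^\omega c$. Similarly the candidate FM realization yields $\mathfrak{r}(0) = D^{(0)}$ and, for a non-empty word $\omega' = \omega \cdot j$, the coefficient $C^{(0)} (A^{(0)})^\omega B^{(0)}_j$. The key structural observation is that $\mathcal{M}_0$ is $A$-invariant by its very definition, so $(A^{(0)})^\omega A_j c = A^\omega A_j c$ lives in $\mathcal{M}_0$, and $Q_0$ fixes it. Therefore $C^{(0)} (A^{(0)})^\omega B^{(0)}_j = (Q_0 b)^* A^\omega A_j c = b^* A^{\omega \cdot j} c = \hat r_{\omega'}$. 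Together with $D^{(0)} = \mathfrak{r}(0) = b^* c$, this identifies the two power series and confirms that $(A^{(0)}, B^{(0)}, C^{(0)}, D^{(0)})$ is an FM realization of $\mathfrak{r}$.

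For (ii), controllability is essentially tautological: $\bigvee_{\omega, j} (A^{(0)})^\omega B^{(0)}_j = \bigvee_{\omega, j} A^\omega A_j c = \bigvee_{\omega' \neq \emptyset} A^{\omega'} c = \mathcal{M}_0$. Observability is the step I expect to be the main obstacle, because $\mathcal{M}_0$ need not be $A^*$-invariant; however, $A$-invariance of $\mathcal{M}_0$ forces $A^*$-invariance of $\mathcal{M}_0^\perp$, which gives the crucial projection identity $Q_0 A_j^* = Q_0 A_j^* Q_0$ on the full space $\mathbb{C}^n$. Since $(A^{(0)}_j)^* = Q_0 A_j^*|_{\mathcal{M}_0}$, iterating this identity word-by-word yields $((A^{(0)})^*)^\omega Q_0 b = Q_0 A^{*\omega} b$ for every $\omega \in \mathbb{F}^d$. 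Now observability of the descriptor realization gives $\bigvee_\omega A^{*\omega} b = \mathbb{C}^n$, and applying $Q_0$ produces $\mathcal{M}_0$, establishing observability of the FM realization.

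For (iii), the minimality just established identifies $\mathrm{Fdim}\, \mathfrak{r}$ with $\dim \mathcal{M}_0$. Controllability of the descriptor realization ensures $\mathbb{C}^n = \mathrm{span}\{c\} + \mathcal{M}_0$, so $\dim \mathcal{M}_0$ is either $n$ or $n-1$, giving $\dim \mathfrak{r} - 1 \leq \mathrm{Fdim}\, \mathfrak{r} \leq \dim \mathfrak{r}$. Throughout, the main thing to keep track of is the asymmetry between $A$-invariance (which is easy) and $A^*$-invariance (which fails), which is precisely why the observability argument requires the compression identity rather than being a direct mirror of the controllability argument.
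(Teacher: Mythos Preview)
Your proof is correct and follows essentially the same approach as the paper, which simply asserts that the realization and minimality claims are ``easily checked'' without supplying details; you have filled in precisely those details, including the key compression identity $Q_0 A_j^* = Q_0 A_j^* Q_0$ needed for observability.
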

\begin{proof}
It is easily checked that $(A^{(0)} , B^{(0)} , C^{(0)} , D^{(0)})$ is a realization of $\mathfrak{r}$ and this realization will be both controllable and observable, hence minimal, if $(A,b,c)$ is. Finally, $ \mathrm{dim} \, \mathfrak{r} -1 \leq \mathrm{dim} _{FM} \, \mathfrak{r} := \mathrm{dim} \, \mathscr{M} _0 \leq \mathrm{dim} \, \mathfrak{r}$. 
\end{proof}

As proven in \cite[Theorem A]{JMS-ncrat}, given an NC rational function, $\mathfrak{r}$, regular at $0$, the following are equivalent:
\begin{itemize}
    \item[(i)] $\mathfrak{r} \in \mathbb{H} ^2 _d$.
    \item[(ii)] $\mathfrak{r} \in \mathbb{H} ^\infty _d$. 
    \item[(iii)] $\mathfrak{r} = K \{ Z , y , v \}$ is an NC Szeg\"o kernel for some $Z,y,v \in \mathbb{B} ^d _n \times \mathbb{C} ^n \times \mathbb{C} ^n$. 
    \item[(iv)] $ r \mathbb{B} ^d _\mathbb{N} \subseteq \mathrm{Dom} \, \mathfrak{r}$ for some $r>1$.
\end{itemize}
Given an NC rational $\mathfrak{b} \in [\mathbb{H} ^\infty _d ] _1$, its minimal FM realization can be constructed by cutting down its de Branges--Rovnyak realization. Namely, let 
$$ \mathscr{M} _0 (\mathfrak{b} ) := \bigvee _{\omega \neq \emptyset} L^{*\omega} \mathfrak{b} ^\mathrm{t} \subseteq \mathscr{H} ^\mathrm{t} (\mathfrak{b} ), $$ with orthogonal projection $P_0$. It is easy to check that this space is finite--dimensional since $\mathfrak{b} ^\mathrm{t} =K \{Z , y , v \} \in \mathbb{H} ^2 _d$ is also NC rational, hence an NC Szeg\"o kernel and $L ^{* \omega } K \{ Z ,y , v \} = K\{ Z , Z^{*\omega} y, v \}$, see \cite[Lemma 2]{JMS-ncratClark}. Then defining $$ A := L^* | _{\mathscr{M} _0 (\mathfrak{b} )}, \quad B := L^* \mathfrak{b} ^\mathrm{t}, \quad C := (P_0 K_0 ^\mathfrak{b}) ^* \quad \mbox{and} \quad D := \mathfrak{b} (0), $$ yields a minimal FM realization of $\mathfrak{b}$:

\begin{lemma} \label{minFM}
Let $\mathfrak{b} \in [\mathbb{H} ^\infty _d ] _1$ be a contractive and NC rational multiplier of Fock space. The above finite FM realization $( A , B , C , D )$ of $\mathfrak{b}$ obtained by compression of the de Branges--Rovnyak realization of $\mathfrak{b}$ to $\mathscr{M} _0 (\mathfrak{b} )$ is minimal.
\end{lemma}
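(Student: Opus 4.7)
The plan is to verify the two standard criteria for minimality of an FM realization---controllability and observability---after first checking that the compressed tuple $(A,B,C,D)$ really does give a realization of $\fb$. The finite-dimensionality of $\scr{M}_0(\fb)$ is already built in: because $\fb$ is NC rational and in $\mult$, so is $\fbt$, and by the Szeg\"o-kernel characterization of rational Fock-space elements cited earlier, $\fbt = K\{Z,y,v\}$ for some $Z\in\B^d_n$, $y,v\in\C^n$. Thus every backward shift satisfies $L^{*\om}\fbt = K\{Z,Z^{*\om}y,v\}$, and $\scr{M}_0(\fb)$ is contained in the finite-dimensional family $\{K\{Z,u,v\} : u\in\C^n\}$.

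Next I would note that $\scr{M}_0(\fb)$ is $L^*$-invariant: $L_k^* L^{*\om}\fbt = L^{*(k\cdot\om)}\fbt$ still has word-length $\geq 2$, so it lies in $\scr{M}_0(\fb)$, and this persists under closed span. The range of $\mbf{b}$ is contained in $\scr{M}_0(\fb)$ by definition, so the compressed operators $A_k = L_k^*|_{\scr{M}_0(\fb)}$ and $B_k = L_k^*\fbt$ are well-defined on $\scr{M}_0(\fb)$. Because all Taylor coefficients of $\fb$ read off from the full de Branges--Rovnyak realization are of the form $\ip{K_0^\fb}{L^{*\alpha}\mbf{b}_k}_b$ with $L^{*\alpha}\mbf{b}_k \in \scr{M}_0(\fb)$, the self-adjoint projection $P_0$ in $C = (P_0 K_0^\fb)^*$ drops out when paired with such vectors, and the compressed tuple $(A,B,C,D)$ is still a realization of $\fb$.

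For minimality, controllability is immediate from the definition:
$$ \bigvee_{\om,k} A^\om B_k \;=\; \bigvee_{|\alpha|\geq 1} L^{*\alpha}\fbt \;=\; \scr{M}_0(\fb). $$
The heart of the argument is observability. For any $h \in \scr{M}_0(\fb)$ and any $\om\in\F^d$, $L^{*\om}h$ remains in $\scr{M}_0(\fb)$ by the invariance above, so
$$ CA^\om h \;=\; \ip{P_0 K_0^\fb}{L^{*\om} h}_b \;=\; \ip{K_0^\fb}{L^{*\om} h}_b \;=\; (L^{*\om} h)(0) \;=\; \hat h_\om, $$
using that $K_0^\fb$ implements evaluation at $0$ on $\scr{H}^\mrt(\fb)$. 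If $CA^\om h = 0$ for every $\om \in \F^d$, then every Taylor coefficient of $h$ vanishes; since $\scr{H}^\mrt(\fb) \hookrightarrow \hardy$ and elements of $\hardy$ are determined by their Taylor series, this forces $h=0$. Hence the Kalman observability map is injective, and $(A,B,C,D)$ is minimal.

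The only technical delicacy is making sure that the projection $P_0$ in the definition of $C$ does not obstruct the identity $CA^\om h = \hat h_\om$; but this is precisely where $L^*$-invariance of $\scr{M}_0(\fb)$ does the work, allowing $P_0$ to be absorbed. The remaining bookkeeping---lining up the FM realization convention with the de Branges--Rovnyak colligation so that the compressed transfer function is genuinely $\fb$ rather than a letter-reversed variant---is routine once one remembers that $\fb(R) = M^R_{\fbt}$ is the operator encoded by the realization.
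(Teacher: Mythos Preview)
Your proof is correct and follows essentially the same approach as the paper: controllability is immediate from the definition of $\scr{M}_0(\fb)$, and observability is established by showing that $CA^\om h = \hat{h}_\om$ for every $h \in \scr{M}_0(\fb)$, so that injectivity of the observability map follows from the fact that Fock-space elements are determined by their Taylor coefficients. Your write-up is in fact a bit more explicit than the paper's---you spell out the $L^*$-invariance of $\scr{M}_0(\fb)$ and why the projection $P_0$ can be dropped---but the underlying argument is identical.
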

\begin{proof}
It is easily checked that $(A,B,C,D)$ is a finite--dimensional realization of $\mathfrak{b}$. This realization is controllable by construction since
$$ \bigvee _{\substack{\omega \in \mathbb{F} ^d \\ 1 \leq j \leq d }} A ^\omega B_j = \bigvee _{\omega \neq \emptyset} L^{*\omega} \mathfrak{b} ^\mathrm{t} = \mathscr{M} _0 (\mathfrak{b} ). $$ Suppose that there is an $h \in \mathscr{M} _0 (\mathfrak{b} )$ so that $h$ is orthogonal to $\bigvee _{\omega \in \mathbb{F} ^d} A^{*\omega} C^*$. Then, for any $\omega \in \mathbb{F} ^d$,
\begin{eqnarray} 0 & = & \left\langle A^{*\omega ^\mathrm{t}} C^*, h\right\rangle = \left\langle P_0 K_0 ^\mathfrak{b}, A^{\omega} h\right\rangle \nonumber \\
& = & \left\langle K_0 ^\mathfrak{b}, L^{*\omega } h^\mathrm{t}\right\rangle_\mathfrak{b} = \hat{h} _\omega. \nonumber \end{eqnarray} It follows that all Taylor coefficients of $h \in \mathscr{M} _0 (\mathfrak{b} ) \subseteq \mathbb{H} ^2 _d$ vanish so that $h \equiv 0$. This proves that the realization is also observable, hence minimal. 
\end{proof}

\begin{prop} \label{ratfactor}
If $\mathfrak{b} \in [\mathbb{H} ^\infty _d ] _1$ is non-CE and NC rational, then its Sarason function $\mathfrak{a} \in [\mathbb{H} ^\infty _d ] _1$ is also NC rational. The minimal FM realization size of $\mathfrak{a}$ is at most that of $\mathfrak{b}$.
\end{prop}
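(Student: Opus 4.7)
The plan is to assemble an FM realization of $\fa$ of size at most $\Fdim \fb$ by taking the $\fa$-component of the Sarason colligation of Section~\ref{sect:CE} and compressing it onto its controllability subspace, which will turn out to be exactly the finite-dimensional space $\scr{M}_0(\fb)$ from Lemma~\ref{minFM}.

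First I would invoke Lemma~\ref{minFM}: since $\fb$ is NC rational, the subspace $\scr{M}_0(\fb) = \bigvee_{\om \neq \emptyset} L^{*\om}\fbt \subseteq \scr{H}^\mrt(\fb)$ is finite-dimensional with $\dim \scr{M}_0(\fb) = \Fdim \fb$, and $(A,B,C,D)$ with $A = L^*|_{\scr{M}_0(\fb)}$, $B = \mbf{b}$, $C = (P_0 K_0^\fb)^*$, $D = \fb(0)$ is a minimal FM realization of $\fb$. Next I would recall from Section~\ref{sect:CE} that $\fa$ is the transfer function of the Sarason colligation
$$U_\fa := \bpm X & \mbf{b} \\ -\fa(0) \ip{\fbt}{\cdot}_b & \fa(0) \epm : \bpm \scr{H}^\mrt(\fb) \\ \C \epm \to \bpm \scr{H}^\mrt(\fb) \otimes \C^d \\ \C \epm,$$
with $X = L^*|_{\scr{H}^\mrt(\fb)}$. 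Crucially, the state-space data $(X, \mbf{b})$ here coincide with $(A, B)$ from the FM realization of $\fb$ except for being viewed on the larger ambient state space $\scr{H}^\mrt(\fb)$.

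The main step is then to identify the controllability subspace of $(X, \mbf{b})$ in $\scr{H}^\mrt(\fb)$ as exactly $\scr{M}_0(\fb)$. This is immediate from the definitions: the $X$-orbit of the components of $\mbf{b} = L^*\fbt$ is the linear span of $L^{*\alpha}\fbt$ for $\alpha \neq \emptyset$, which is the defining spanning set of $\scr{M}_0(\fb)$. Since $\scr{M}_0(\fb)$ is $X$-invariant and contains the range of $\mbf{b}$, standard realization theory yields that compressing $U_\fa$ to $\scr{M}_0(\fb)$ preserves the transfer function, producing the finite-dimensional FM realization $(A,\, B,\, -\fa(0)\ip{P_0 \fbt}{\cdot}_b,\, \fa(0))$ of $\fa$ on the $\Fdim \fb$-dimensional space $\scr{M}_0(\fb)$.

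Consequently $\fa$ admits a finite FM realization and is therefore NC rational, and since a minimal realization is no larger than any other, $\Fdim \fa \leq \Fdim \fb$. I do not foresee any serious obstacle: the only nontrivial point is the identification of the controllability subspace of $(X, \mbf{b})$ with $\scr{M}_0(\fb)$, and this is essentially a definitional unwinding using the explicit form of the Sarason colligation established in Section~\ref{sect:CE}.
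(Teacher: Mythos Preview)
Your proposal is correct and follows essentially the same route as the paper: both compress the Sarason colligation for $\fa$ from $\scr{H}^\mrt(\fb)$ down to the finite-dimensional subspace $\scr{M}_0(\fb)$, observing that this is $X$-invariant and contains the range of $\mbf{b}$, to obtain the finite FM realization $(A,\, B,\, -\fa(0)\ip{P_0\fbt}{\cdot}_\fb,\, \fa(0))$ of size $\Fdim\fb$. Your framing via the controllability subspace makes the reason the compression preserves the transfer function a bit more explicit than the paper's terse ``this compression also yields,'' but the argument is the same.
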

\begin{proof}{ (of Proposition \ref{ratfactor})}
As above, let $P_0$ denote the orthogonal projection of $\mathscr{H} ^\mathrm{t} (\mathfrak{b} )$ onto $\mathscr{M} _0 (\mathfrak{b} )$, and let $(A,B,C,D)$ be the minimal and finite FM realization of $\mathfrak{b}$ obtained by compression of the de Branges--Rovnyak realization of $\mathfrak{b}$ to $\mathscr{M} _0 (\mathfrak{b} )$. This compression also yields a finite--dimensional FM colligation and hence realization for $\mathfrak{c} = \left( \begin{smallmatrix} \mathfrak{b} \\ \mathfrak{a} \end{smallmatrix} \right) $: $$  U _\mathfrak{c} = \begin{pNiceMatrix} A & B \\  C & D \\ -\mathfrak{a} (0) \langle P _0 \mathfrak{b} ^\mathrm{t}, \cdot \rangle _\mathfrak{b}   & \mathfrak{a} (0) \end{pNiceMatrix} : \begin{pNiceMatrix} \mathscr{M} _0 (\mathfrak{b} ) \\ \mathbb{C} \end{pNiceMatrix} \rightarrow \begin{pNiceMatrix} \mathscr{M} _0 (\mathfrak{b} ) \otimes \mathbb{C} ^d \\ \mathbb{C} ^2 \end{pNiceMatrix}.$$  In particular,
$$ U _\mathfrak{a} := \begin{pNiceMatrix} A & B \\ -\mathfrak{a} (0) \left\langle P_0 \mathfrak{b} ^\mathrm{t}, \cdot \right\rangle_\mathfrak{b}  & \mathfrak{a} (0) \end{pNiceMatrix} :   \begin{pNiceMatrix} \mathscr{M} _0 (\mathfrak{b} ) \\ \mathbb{C} \end{pNiceMatrix} \rightarrow \begin{pNiceMatrix} \mathscr{M} _0 (\mathfrak{b} ) \otimes \mathbb{C} ^d \\ \mathbb{C}  \end{pNiceMatrix}. $$ is a finite--dimensional and contractive FM colligation for $\mathfrak{a} $, so that $\mathfrak{a}$ is NC rational and $\mathrm{dim} _{FM} \, \mathfrak{a} \leq \mathrm{dim} \, \mathscr{M} _0 (\mathfrak{b} ) = \mathrm{dim} _{FM} \, \mathfrak{b}$.
\end{proof}

\begin{thm} \label{CEisinner}
If $\mathfrak{b}  \in [\mathbb{H} ^\infty _d ] _1$ is NC rational, then either $\mathfrak{b}$ is inner or it is not column--extreme. If $\mathfrak{b}$ is non-CE and $\mathfrak{a}$ is its NC rational Sarason function, then $\mathfrak{c} := \left( \begin{smallmatrix} \mathfrak{b} \\ \mathfrak{a} \end{smallmatrix} \right)$ is inner, so that $T:= I - \mathfrak{b} (R) ^* \mathfrak{b} (R) = \mathfrak{a} (R) ^* \mathfrak{a} ( R)$ is factorable.
\end{thm}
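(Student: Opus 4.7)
The plan is to reduce the whole theorem to a single claim: \emph{every NC rational column-extreme contractive multiplier is inner}. Case 1 (the dichotomy) then follows immediately. For Case 2, Proposition \ref{ratfactor} produces a rational Sarason function $\fa$, and Corollary \ref{Sarmaxouter} ensures $\fc := \bsm \fb \\ \fa \esm$ is CE; since $\fc$ is a column of rationals it is itself NC rational (in the $\scr{L}(\C,\C^2)$-valued sense), and the reduced claim applied to $\fc$ gives $\fc$ inner. This forces $I = \fb(R)^*\fb(R) + \fa(R)^*\fa(R)$, i.e., $T = \fa(R)^*\fa(R)$.

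To prove the reduced claim I will exploit that $\scr{M}_0(\fb) := \bigvee_{\omega \neq \emptyset} L^{*\omega}\fb^\mrt \subseteq \scr{H}^\mrt(\fb)$ is finite-dimensional when $\fb$ is NC rational (since $\fb^\mrt$ is then an NC Szeg\"o kernel, whose backward shifts span a finite-dimensional space). The components of $\mbf{b}$ lie in $\scr{M}_0(\fb)$ and $\scr{M}_0(\fb)$ is $L_j^*$-invariant, so by Lemma \ref{minFM} the compression of $U_\fb$ to $\scr{M}_0(\fb) \oplus \C$ is a minimal (and in particular observable) FM colligation $(A,B,C,D)$, satisfying $\bigvee_\omega A^{*\omega} C^* = \scr{M}_0(\fb)$.

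The CE hypothesis enters via Proposition \ref{rowuniCE}: $U_\fb$ is both isometric and co-isometric. Since $U_\fb$ carries $\scr{M}_0(\fb) \oplus \C$ into $\scr{M}_0(\fb) \otimes \C^d \oplus \C$, the restriction inherits isometry, giving $A^*A + C^*C = I$ on $\scr{M}_0(\fb)$. I will then show that the ``isometric subspace''
\[
V := \Bigl\{ v \in \scr{M}_0(\fb) : \sum_{|\omega|=n} \| A^\omega v \|^2 = \|v\|^2 \text{ for all } n \geq 0 \Bigr\}
\]
is trivial. Iterating the identity $\sum_j \|A_j u\|^2 = \|u\|^2 - \|Cu\|^2$, which follows from $A^*A + C^*C = I$, produces
\[
\sum_{|\omega| < n} \|CA^\omega v\|^2 = \|v\|^2 - \sum_{|\omega| = n} \|A^\omega v\|^2;
\]
for $v \in V$ this forces $CA^\omega v = 0$ for every $\omega$, and observability then gives $v \perp \scr{M}_0(\fb)$, so $v = 0$. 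Because $\scr{M}_0(\fb)$ is finite-dimensional, $V = \{0\}$ forces the decreasing sequence $\mr{Ad}^{(n)}_{A^*, A}(I) = \sum_{|\omega|=n} A^{*\omega} A^\omega$ to converge in norm to $0$ (any nonzero limit would admit a top eigenvector lying in $V$). Consequently $\|X^{(n)}\mbf{b}\|_b^2 = \ip{\mbf{b}}{\mr{Ad}^{(n)}_{A^*, A}(I)\,\mbf{b}}_b \to 0$, which is exactly the weak-purity hypothesis of Theorem \ref{pureisinner}; hence $\fb$ is inner.

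The main obstacle I anticipate is the passage to Case 2, since Lemma \ref{minFM}, Proposition \ref{rowuniCE}, and Theorem \ref{pureisinner} must all be read in the operator-valued setting for the column $\fc$ (with $\cH = \C$ and $\cJ = \C^2$). Finite-dimensionality of $\scr{M}_0(\fc)$ still holds coordinate-wise, the compression of $U_\fc$ is again isometric when $\fc$ is CE, and the same purity-via-observability argument carries through; the bookkeeping, however, is non-trivial and must be executed carefully within the $\scr{L}(\C,\C^2)$-valued framework of Section \ref{sect:CE}.
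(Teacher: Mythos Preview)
Your proof is correct, but it takes a genuinely different route from the paper's. The paper does not use the CE hypothesis to establish purity at all: instead it invokes the descriptor--realization theory of \cite{JMS-ncrat} to see that the minimal FM tuple $A$ is jointly similar to a strict row contraction, then appeals to \cite[Lemma 1]{JMS-ncratClark} (a strict column contraction is similar to a strict row contraction) to conclude that $A^*$ is pure. This single purity fact then feeds into Theorem \ref{pureisinner} (CE case) or Corollary \ref{CEcolumn} (non-CE case) without any bootstrap. Your argument, by contrast, extracts purity from the \emph{isometric} colligation equation $A^*A + C^*C = I$ (available only under CE, via Proposition \ref{rowuniCE}) together with observability of the compressed realization; it is more self-contained in that it avoids descriptor realizations and the external column--to--row lemma, at the cost of having to rerun the argument for the $\C^2$-valued column $\fc$ in the non-CE case. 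One small point: your sentence ``any nonzero limit would admit a top eigenvector lying in $V$'' hides a short but necessary step --- one must check that the top eigenspace of the limit $Q$ is $A_j$-invariant (this follows from equality in $\sum_j \ip{A_j v}{QA_j v}\le \|Q\|\sum_j\|A_j v\|^2$), which then forces it into $V$. Finally, you can sidestep the operator-valued bookkeeping you flag at the end: the Sarason colligation $U_\fc$ already lives on $\scr{H}^\mrt(\fb)\oplus\C$ and is isometric (as recorded just before Corollary \ref{Sarmaxouter}), so compressing it to $\scr{M}_0(\fb)\oplus\C$ gives an isometric finite colligation with the \emph{same} $A$ as for $\fb$; observability holds already from the $\fb$-row by Lemma \ref{minFM}, and your purity argument then yields $\|X^{(n)}\mbf{b}\|_b\to 0$, after which Corollary \ref{CEcolumn} gives $\fc$ inner without ever leaving the scalar framework.
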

\begin{proof}
Suppose that $\mathfrak{b} \in [\mathbb{H} ^\infty _d ] _1$ is NC rational. Consider the right free de Branges--Rovnyak space $\mathscr{H} ^\mathrm{t} (\mathfrak{b} )$ of $\mathfrak{b}$. We define the minimal de Branges--Rovnyak FM realization $(A,B,C,D)$ as above. Recall that since $\mathfrak{b} \in \mathbb{H} ^2 _d$ is NC rational, it also has a \emph{minimal descriptor realization}, $(A' , b' , c ') \in \mathbb{C} ^d _n \times \mathbb{C} ^n \times \mathbb{C} ^n$.  By \cite[Theorem A]{JMS-ncrat}, $A'$ is jointly similar to a strict row contraction. By Lemma \ref{FMdescriptor}, one can construct a minimal FM realization, $( A ^{(0)} , B ^{(0)} , C ^{(0)} , D ^{(0)} )$, from the minimal descriptor realization $(A' ,b' , c')$ by defining
$$ \mathscr{M} _0 := \bigvee _{\omega \neq \emptyset } A^{'w} c, $$ with projector $Q_0$ and 
$$ A ^{(0)} := A' | _{\mathscr{M} _0}, \quad B ^{(0)} := A' c', \quad C ^{(0)} := (Q_0 b') ^* \quad \mbox{and} \quad D ^{(0)} = \mathfrak{b} (0). $$ Since $A ^{(0)}$ is the restriction of $A'$ to an invariant subspace and $A'$ is pure, we obtain that $A ^{(0)}$ is also pure. Namely,  
$$ \| \mathrm{Ad} _{(A ^{(0)} ) ^{(n)}} (I) \| ^2 =  \| \mathrm{Ad} _{A'} ^{(n)} (Q_0 ) \| ^2 \leq  \| \mathrm{Ad} _{A'} ^{(n)} ( I ) \| ^2 \rightarrow 0, $$ 
since $A'$ is pure. Since minimal FM realizations are unique up to joint similarity, $A$ is jointly similar to $A^{(0)}$ and is hence pure and similar to a strict row contraction. Hence $A ^*$ is jointly similar to a strict column contraction. By \cite[Lemma 1]{JMS-ncratClark}, $A ^*$ is then jointly similar to a strict row contraction and $A ^*$ defines a pure row $d-$tuple, $( A_1 ^* , \cdots , A_d ^* )$. Since $A^* \in \mathbb{C} ^d _m$ is pure, we necessarily have that $\mathfrak{b}$ is inner if $\mathfrak{b}$ is CE and $\mathfrak{c}$ is inner if $\mathfrak{b}$ is non-CE by Theorem \ref{pureisinner}.
\end{proof}

\begin{remark}
Setting $d=1$, it follows that any contractive rational multiplier of Hardy space is either not an extreme point, or it is inner. Indeed, if $\mathfrak{b}$ is a contractive rational multiplier of $H^2$ which is not inner, then $I - \mathfrak{b} (S) ^* \mathfrak{b} (S) >0$ is factorable by the Fej\'er--Riesz theorem, so that there is a contractive, outer and NC rational multiplier, $\mathfrak{a} \in H^\infty$, so that the two-component column $ \left( \begin{smallmatrix} \mathfrak{b} \\ \mathfrak{a} \end{smallmatrix} \right)$ is inner. That is, $\mathfrak{b}$ is non-CE. Further recall that a contractive multiplier of $H^2$ is column--extreme if and only if it is an extreme point. Hence, the assertion of Theorem \ref{CEisinner} that any contractive NC rational multiplier of Fock space is either inner or non-CE, is an exact analogue of classical fact.
\end{remark}

\begin{remark}
If $b \in \mathbb{C} \{ \mathbb{\mathfrak{z}} \} $ is also a free polynomial, then the $A$ from a minimal FM realization of $b$ is jointly nilpotent and it follows that the Sarason function, $a \in \mathbb{C} \{ \mathbb{\mathfrak{z}} \} $ is also a free polynomial. Also note that if $p \in [\mathbb{H} ^\infty _d ] _1$, then either $p$ is inner or $p$ is non-CE by the NC Fej\'er--Riesz theorem of Popescu \cite[Theorem 1.6]{Pop-multi}.

The above results also hold if $b = K \{ Z , y ,v \}$ is an NC Szeg\"o kernel at an infinite point $Z \in \mathbb{B} ^d _{\infty}$. In this case, since $Z$ is a strict row contraction, the Sarason function, $a$ is also an NC Szeg\"o kernel at a potentially infinite point and so is the outer factor $D$, of $c = \left( \begin{smallmatrix} b \\ a \end{smallmatrix} \right)$. 
\end{remark}

\subsection{NC rational Fej\'er--Riesz}



\begin{prop} \label{resquare}
Let $\mathfrak{r} \in \mathbb{H} ^\infty _d$ be an NC rational function. Then $\mathfrak{r} (R) ^* \mathfrak{r} (R) =  \mathrm{Re} \, \mathfrak{H} (R) \geq 0$ where $\mathfrak{H} \in \mathbb{H} ^\infty _d$ is NC rational, Herglotz and $\mathrm{dim} _{FM} \, \mathfrak{H} \leq \mathrm{dim} _{FM} \, \mathfrak{r} +1$.
\end{prop}
In the above statement, recall that an NC function, $H \in \mathscr{O} (\mathbb{B} ^d _\mathbb{N} )$ is called a \emph{left free Herglotz function}, if $\mathrm{Re} \, H(Z) \geq 0$ for all $Z \in \mathbb{B} ^d _\mathbb{N}$. An NC function, $G \in \mathscr{O} (B ^d _\mathbb{N} )$ is then called a \emph{right free Herglotz function} if $G ^\mathrm{t}$ is a left free Herglotz function. Since, in the above statement, $\mathrm{Re} \, H (R) \geq 0$, it follows also that $\mathrm{Re} \, H(L) = U_\mathrm{t} \mathrm{Re} \, H(R) U_\mathrm{t} \geq 0$, and taking inner products against NC Szeg\"o kernels shows that $H$ is a left NC Herglotz function and $H^\mathrm{t}$ is right Herglotz. 
\begin{proof}
Since $\mathfrak{r}$ is NC rational, $\mathfrak{r} ^\mathrm{t} \in \mathbb{H} ^2 _d$ is also NC rational by \cite[Lemma 2]{JMS-ncratClark} and $\mathfrak{r}, \mathfrak{r} ^\mathrm{t}$ have the same size minimal descriptor realizations. Hence $\mathfrak{r} ^\mathrm{t} \in \mathbb{H} ^\infty _d$ is an NC kernel, $\mathfrak{r} ^\mathrm{t} = K \{ Z ,y , v \}$ and $\mathfrak{r} (R) ^* \mathfrak{r} ^\mathrm{t} = K \{ Z , y , \mathfrak{r} ^\mathrm{t} (Z) v \} =: \mathfrak{s} ^{ \mathrm{t}} \in \mathbb{H} ^\infty _d$ is NC rational and the size of the minimal descriptor realization of $\mathfrak{s}$ is at most that of $\mathfrak{r}$. In more detail, if $\mathfrak{r}$ has minimal descriptor realization $(A,b,c)$, then the minimal descriptor realization of $\mathfrak{r} ^\mathrm{t}$ is $(A^\mathrm{t}, \overline{c} , \overline{b} )$, where $A^\mathrm{t} := (A_1 ^\mathrm{t} , \cdots , A_d ^\mathrm{t} )$ denotes component--wise matrix transpose and $\overline{c}$ denotes entry--wise complex conjugation \cite[Lemma 2]{JMS-ncratClark}. Moreover, if $\mathfrak{r} \in \mathbb{H} ^2 _d$ has minimal descriptor realization $(A,b,c)$, then by \cite[Theorem A]{JMS-ncrat}, we can assume, without loss in generality that $A \in \mathbb{B} ^d _\mathbb{N}$ is a strict row contraction and in this case, $\mathfrak{r}$ is equal to the NC Szeg\"o kernel vector $K \{ \overline{A} , \overline{b} , \overline{c} \}$. As before $\overline{A} = (\overline{A} _1 , \cdots , \overline{A} _d)$ and $\overline{A} _j$ denotes entry--wise complex conjugation. Putting this all together, if $(A,b,c)$ is a minimal descriptor realization of $\mathfrak{r}$, then $\mathfrak{s}$ has a descriptor realization $(A^\mathrm{t}, \overline{c} , \overline{\mathfrak{r} ^\mathrm{t}} (A^\mathrm{t} ) \overline{b} )$, where $\overline{\mathfrak{r} ^\mathrm{t}}$ denotes the power series obtained by complex conjugation of the Taylor coefficients of $\mathfrak{r} ^\mathrm{t}$. Applying Lemma \ref{FMdescriptor} to construct FM realizations from these descriptor realizations for $\mathfrak{r}$ and $\mathfrak{s}$ shows that the minimal FM realization size of $\mathfrak{s}$ obeys $\mathrm{dim} _{FM} \, \mathfrak{s} \leq \mathrm{dim} _{FM} \, \mathfrak{r} +1$. 

In particular, $\mathfrak{s} (R)$ is a bounded right multiplier. Define the bounded left Toeplitz operator, 
$$ T := \mathfrak{r} (R) ^* \mathfrak{r} (R) - \mathfrak{s} (R) ^*. $$ Since $T$ is a bounded left Toeplitz operator, it is completely determined by its `Fourier coefficients'
$$ \hat{T} _\omega := \left\langle L^\omega 1, T1\right\rangle_{\mathbb{H} ^2} \quad \mbox{and} \quad \left\langle 1, TL^\omega 1\right\rangle_{\mathbb{H} ^2} =: \hat{T} _{-\omega}, $$ and $T$ is the strong operator topology limit of its partial Ces\`aro sums \cite[Lemma 1.1]{DP-inv}. Each of these partial sums is a left Toeplitz operator of the form $p(R) + q(R) ^*$, for $p,q \in \mathbb{C} \{ \mathbb{\mathfrak{z}} \} $. We claim that $T \in \mathfrak{R} ^\infty _d$ is an analytic left Toeplitz operator, \emph{i.e.} that $T = y(R)$ is a bounded right multiplier, $y \in \mathbb{H} ^\infty _d$. To prove this, it suffices to show that the `negative Fourier coefficients' of $T$ vanish. For any $\omega \in \mathbb{F} ^d$,
\begin{eqnarray} \hat{T} _{-\omega} & = & \left\langle 1, TL^\omega 1\right\rangle_{\mathbb{H} ^2} \nonumber \\
& = & \left\langle T^*1, L^\omega 1\right\rangle = \left\langle \mathfrak{r}(R) ^* \mathfrak{r} ^\mathrm{t} - \mathfrak{s} ^\mathrm{t}, L^\omega 1\right\rangle \nonumber \\
& = & \left\langle \mathfrak{s} ^\mathrm{t} - \mathfrak{s} ^\mathrm{t}, L^\omega 1\right\rangle =0, \nonumber \end{eqnarray} and it follows that the partial Ces\`aro sums of $T$ have the form
$$ \sigma _n (T) = p_n (R), \quad \quad p_n \in \mathbb{C} \{ \mathbb{\mathfrak{z}} \} . $$ Since the partial Ces\`aro sum map, $T \mapsto \sigma _n (T)$, is a completely contractive linear map on $\mathscr{L} (\mathbb{H} ^2 _d )$, it follows that $p_n (R)$ is a uniformly bounded sequence of free polynomial right multipliers that converge to $T$ in the strong operator topology so that $T \in \mathfrak{R} ^\infty _d$, $T = y(R)$ for some $y \in \mathbb{H} ^\infty _d$ \cite[Lemma 1.1]{DP-inv}. Also note that $\hat{y} _\emptyset =0$. Hence, for any $\omega \neq \emptyset$, 
\begin{eqnarray} \hat{y} _\omega  & = & \left\langle L^{\omega ^\mathrm{t}} 1, y(R) 1\right\rangle_{\mathbb{H} ^2} \nonumber \\
& = & \left\langle L^{\omega ^\mathrm{t}}1, \mathfrak{s} ^\mathrm{t}\right\rangle_{\mathbb{H} ^2} - \left\langle L^{\omega ^\mathrm{t}}1, 1\right\rangle \overline{\mathfrak{s} (0)} \nonumber \\
& = & \hat{\mathfrak{s}} _\omega, \nonumber \end{eqnarray} so that $y(R) = \mathfrak{s} (R) - cI$ with $\overline{\mathfrak{s} (0)} = c$. In particular, $y = \mathfrak{y}$ is NC rational. Moreover,
$$ 0 \leq \mathfrak{r} (R) ^* \mathfrak{r} (R) = \mathfrak{y} (R) + \mathfrak{s} (R) ^*, $$ is positive so that $c \geq 0$ and we conclude that $$ \mathfrak{r} (R) ^* \mathfrak{r} (R) = \mathfrak{y} (R) + \mathfrak{y} (R) ^* + cI =: \mathrm{Re} \, \mathfrak{H} (R), $$ with $\mathfrak{H} := 2\mathfrak{y} + c \in \mathbb{H} ^\infty _d$, a bounded and NC rational Herglotz multiplier.
\end{proof}
\begin{lemma}
Let $T :=  \mathrm{Re} \, \mathfrak{H} (R) \geq 0$ be a positive semi-definite and NC rational left Toeplitz operator. If we define the positive NC measure $\mu _T (L^\alpha ) := \left\langle 1, TL^\alpha 1\right\rangle _{\mathbb{H} ^2}$, then $\mu _T$ is the \emph{NC Clark measure} of a contractive, NC rational $\mathfrak{b} _T \in [ \mathbb{H} ^\infty _d ] _1$ where $\mathfrak{b} _T = \frac{\mathfrak{H} -1}{\mathfrak{H} + 1}$. 
\end{lemma}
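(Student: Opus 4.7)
The plan is to identify $\fb_T$ via the operator-valued Cayley transform applied to $\fH(R)$ and then recognize $\mu_T$ as its NC Clark measure through the Popescu/NC Fatou formula already invoked in the proof of Corollary \ref{answer}. First I would observe that, because $\fH$ is a bounded NC rational Herglotz multiplier (by Proposition \ref{resquare}), $\re \fH(R) \geq 0$ gives $\re(\fH(R)+I) \geq I$, so $\fH(R)+I$ is bounded below and has dense range, hence is boundedly invertible. Since $\fH(R) + I \in \scr{R}^\infty _d$ and the right analytic Toeplitz algebra is inverse-closed in $\scr{L}(\bH^2_d)$ (as the commutant of $\scr{L}^\infty _d$), the inverse $(\fH(R)+I)^{-1}$ is again a bounded right multiplier, with NC rational symbol. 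Hence $\fb _T := (\fH - 1)(\fH + 1)^{-1}$ is a well-defined NC rational right multiplier.

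Next, I would do the standard Cayley transform algebra. From $\fb_T(R) = (\fH(R)-I)(\fH(R)+I)^{-1}$ one obtains $I - \fb_T(R) = 2(\fH(R)+I)^{-1}$, hence
\[
(I-\fb_T(R))^{-1} = \tfrac{1}{2}(\fH(R)+I),
\]
and a direct calculation gives
\[
I - \fb_T(R)^*\fb_T(R) = 4\,(\fH(R)+I)^{-*}\,T\,(\fH(R)+I)^{-1}.
\]
Contractivity of $\fb_T$, hence $\fb_T \in [\mult]_1$, follows at once. Inserting these two identities into the Popescu/NC Fatou formula
\[
(I - \fb_T(R)^*)^{-1}\bigl(I - \fb_T(R)^*\fb_T(R)\bigr)(I-\fb_T(R))^{-1}
\]
collapses to exactly $T$.

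Finally, I would invoke the NC Fatou theorem (as recalled in the proof of Corollary \ref{answer}): for any bounded positive semi-definite left Toeplitz $T$, the absolutely continuous NC measure $\mu_T(L^\alpha) = \langle 1, TL^\alpha 1\rangle$ is the NC Clark measure of the \emph{unique} contractive left multiplier $b_T \in [\mult]_1$ for which $T$ is given by the displayed Popescu formula in $b_T$. The computation of the previous paragraph shows that $\fb_T$ satisfies this formula, so uniqueness forces $b_T = \fb_T$, proving $\mu_T = \mu_{\fb_T}$.

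There is no serious obstacle here; the only point requiring any care is the invertibility and rationality of $(\fH+1)^{-1}$ as a right multiplier, which is handled by combining $\re \fH(R) \geq 0$ with the inverse-closedness of $\scr{R}^\infty_d$ in $\scr{L}(\bH^2_d)$.
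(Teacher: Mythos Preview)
Your approach is different from the paper's and essentially correct, but the final step leans on a uniqueness claim that does not hold. The paper's proof computes the left free Herglotz--Riesz transform of $\mu_T$ directly from the moment formula and finds
\[
\mf{H}_T(Z) = \fH(Z) - i I_n\,\nbim \fH(0),
\]
then observes that Herglotz functions differing by a purely imaginary constant have the same NC Clark measure, so $\mu_T$ is the Clark measure of the Cayley transform of either $\fH_T$ or $\fH$. Your route instead performs the Cayley transform algebra at the operator level to show that $T$ equals the Popescu expression $(I-\fb_T(R)^*)^{-1}(I-\fb_T(R)^*\fb_T(R))(I-\fb_T(R))^{-1}$, which is a clean and legitimate computation.

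The problem is your last sentence: the contractive multiplier $b$ satisfying the Popescu formula for a given bounded positive left Toeplitz $T$ is \emph{not} unique. If $H_1$ and $H_2 = H_1 + i\la$ ($\la \in \R$) are two bounded Herglotz multipliers, their Cayley transforms $b_1 \neq b_2$ both give $\nbre H_1(R) = \nbre H_2(R) = T$; indeed, $H_1(R)-H_2(R)$ being skew-adjoint forces it to be a purely imaginary scalar (apply the operator to the vacuum vector), so this is the only freedom, but it is genuine freedom. The proof of Corollary~\ref{answer} only asserts existence of $b_T$, not uniqueness. Thus ``uniqueness forces $b_T = \fb_T$'' fails as stated.

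The fix is minor. One option: observe that any two contractive $b$'s satisfying the Popescu formula for the same $T$ have Herglotz functions differing by a purely imaginary constant, hence have the same NC Clark measure (this is exactly the fact the paper invokes at the end of its proof, citing \cite{JMS-ncratClark}). Another option: note that, essentially by definition, the NC Clark measure of $b$ has moments $\mu_b(L^\alpha) = \ip{1}{\nbre H_b(R)\, L^\alpha 1}$ where $H_b = (1+b)(1-b)^{-1}$, so your operator identity $T = \nbre \fH(R) = \nbre H_{\fb_T}(R)$ gives $\mu_T = \mu_{\fb_T}$ directly, without any appeal to uniqueness. Either way your argument goes through; just replace the uniqueness clause with one of these.
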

Recall that if $\mu \in  \left( \mathscr{A} _d   \right) ^\dag _+$ is any positive NC measure, then one can define its left \emph{free Herglotz--Riesz transform}, 
$$ H_\mu (Z) := \mathrm{id} _n \otimes \mu \circ (I + ZL^*) (I - ZL ^* ) ^{-1}; \quad \quad Z \in \mathbb{B} ^d _n, $$ where $ZL ^* := Z_1 \otimes L_1 ^* + \cdots + Z_d \otimes L_d ^*$ \cite{JM-freeAC,JM-freeCE}. The NC function, $H_\mu \in \mathscr{O} (\mathbb{B} ^d _\mathbb{N} )$, is an \emph{NC Herglotz function}, \emph{i.e.} for any $Z \in \mathbb{B} ^d _n$, $\mathrm{Re} \, H_\mu (Z) \geq 0$. As in the classical, $d=1$ setting, the inverse Cayley transform of $H_\mu$, 
$$ b _\mu (Z) := ( H_\mu (Z) - I_n )(H_\mu (Z) + I_n ) ^{-1}; \quad \quad Z \in \mathbb{B} ^d _n, $$ is then a contractive left multiplier of Fock space so that $\mu = \mu _{b _\mu}$ is the \emph{NC Clark measure} of $b_\mu$ \cite{JM-freeAC,JM-freeCE}.
\begin{proof}
The (left) NC Herglotz--Riesz transform of $\mu _T$ can be expanded as a power series:
\begin{eqnarray} \mathfrak{H}_T (Z) & = & 2\sum _\alpha Z^{\alpha ^\mathrm{t}} \left\langle L^\alpha 1, T1\right\rangle_{\mathbb{H} ^2} - \left\langle 1, T1\right\rangle_{\mathbb{H} ^2} \nonumber \\
& = & \sum _\alpha Z^{\alpha ^\mathrm{t}} \left\langle L^\alpha 1, (\mathfrak{H} (R) + \mathfrak{H} (R) ^*) 1\right\rangle_{\mathbb{H} ^2} - \left\langle 1,  \mathrm{Re} \, \mathfrak{H} (R)  1\right\rangle_{\mathbb{H} ^2} \nonumber \\
& = & \mathfrak{H}   (Z)  +  I_n \overline{ \mathfrak{H} (0)} -  I_n \mathrm{Re} \, \mathfrak{H} (0) \nonumber \\
& = & \mathfrak{H} (Z) -i  I_n \mathrm{Im} \, \mathfrak{H} (0), \nonumber \end{eqnarray}
see \cite{JM-freeAC,JM-freeCE}. Since $\mathfrak{H} _T$ and $\mathfrak{H}$ are NC Herglotz functions which differ by an imaginary constant, $\mu _T$ is the NC Clark measure of both $\mathfrak{b} _T$ and $\mathfrak{b}$, where $\mathfrak{b} _T$ is the inverse Cayley transform of $\mathfrak{H} _T$ and $\mathfrak{b}$ is the inverse Cayley transform of $\mathfrak{H}$ \cite{JMS-ncratClark}. 
\end{proof}

\begin{thm}[NC rational Fej\'er--Riesz] \label{ncFR}
Any NC rational, positive semi-definite left Toeplitz operator, $T = \mathrm{Re} \, \mathfrak{H} (R) \geq 0 $, $\mathfrak{H} \in \mathbb{H} ^\infty _d$, is factorable. If $\mathfrak{b} = (\mathfrak{H} -1)(\mathfrak{H} +1) ^{-1} \in [\mathbb{H} ^\infty _d ] _1$ is the NC rational and contractive inverse Cayley transform of $\mathfrak{H}$, then the factorization is
$$ T = \mathfrak{D} (R) ^* \mathfrak{D} (R), \quad \mbox{where} \quad \mathfrak{D} (R) = \mathfrak{a} (R) (I -\mathfrak{b} (R ) ) ^{-1} \in \mathbb{H} ^\infty _d, $$ $\mathfrak{a} \in [\mathbb{H} ^\infty _d ] _1$ is the NC rational Sarason outer function of $\mathfrak{b}$, $\mathrm{dim} _{FM} \, \mathfrak{a} \leq \mathrm{dim} _{FM} \, \mathfrak{b} = \mathrm{dim} _{FM} \, \mathfrak{H}$ and $\mathrm{dim} _{FM} \, \mathfrak{D} \leq 2 \mathrm{dim} _{FM} \, \mathfrak{H}$. The NC domain of $\mathfrak{D}$ contains a row-ball, $r\mathbb{B} ^d _\mathbb{N}$ of radius $r>1$ and $\mathrm{Dom} \, \mathfrak{D} \supseteq \mathrm{Dom} \, \mathfrak{a} \, \cap \, \mathrm{Dom} \, \mathfrak{H}$. 
\end{thm}
\begin{proof}
Let $T = \mathrm{Re} \, \mathfrak{H} (R)  \geq 0$ be a bounded, positive and NC rational left Toeplitz operator. That is, $\mathfrak{H} \in \mathbb{H} ^\infty _d$ is bounded, NC Herglotz and NC rational. Since we assume that $T$ is bounded, the NC Fatou theorem implies, as before in the proof of Theorem \ref{main1}, that 
$$ T = (I - \mathfrak{b}  (R) ^*) ^{-1} (I - \mathfrak{b}  (R) ^* \mathfrak{b} (R) ) (I - \mathfrak{b} (R) ) ^{-1}, $$
where $\mathfrak{b} = (\mathfrak{H} - I ) (\mathfrak{H} +I ) ^{-1} \in [ \mathbb{H} ^\infty _d ] _1$ is the NC rational inverse Cayley transform of $\mathfrak{H}$ \cite{JM-ncFatou}. We can assume that $T \neq I$, as then it is trivially factorable. If $\mathfrak{b}$ is inner then observe that $T \equiv 0$, which is again trivially factorable. By Theorem \ref{CEisinner}, if $\mathfrak{b}$ is not inner, it cannot be CE. Since $\mathfrak{b}$ is non-CE, Theorem \ref{CEisinner} implies that $I - \mathfrak{b} (R) ^* \mathfrak{b} (R) = \mathfrak{a} (R) ^* \mathfrak{a} (R)$ is factorable, where $\mathfrak{a} \in [\mathbb{H} ^\infty _d ] _1$ is the NC rational Sarason outer function of $\mathfrak{b}$. Hence,
$$ T = (I -\mathfrak{b} (R) ^* ) ^{-1} \mathfrak{a} (R) ^* \mathfrak{a} (R) (I -\mathfrak{b} (R) ) ^{-1} =: \mathfrak{D} (R) ^* \mathfrak{D} (R). $$ Since we assume that $T$ is bounded, so is $\mathfrak{D}$ so that $\mathfrak{D} \in \mathbb{H} ^\infty _d$. In particular, since $\mathfrak{D} \in \mathbb{H} ^\infty _d$ is NC rational, $\mathrm{Dom} \, \mathfrak{D}$ contains a row-ball, $r \mathbb{B} ^d _\mathbb{N}$ of radius $r>1$, and by its definition, $\mathrm{Dom} \, \mathfrak{D} \supseteq \mathrm{Dom} \, (1- \mathfrak{b} ) ^{-1} \cap \mathrm{Dom} \, \mathfrak{a}$. 

Let $(A,B,C,D)$ be a minimal FM realization of $\mathfrak{H}$. Since $\mathfrak{H} = 2 (1- \mathfrak{b} ) ^{-1} -1$, the minimal FM realization of $(1-\mathfrak{b} ) ^{-1}$ is $(A,B, C' , D')$ with $C' = \frac{1}{2} C$ and $D' = (1 - \mathfrak{b} (0) ) ^{-1}$. The minimal FM realization of $1-\mathfrak{b}$ is then given by the `flip' realization $(A^{(-1)}, B^{(-1)}, C^{(-1)}, D^{(-1)})$ where 
$$ A^{(-1)} _j := A_j -\frac{1}{2 D'} B_j C, \quad B^{(-1)} = \frac{1}{D'} B, \quad C^{(-1)} = -\frac{1}{2D'} C \quad \mbox{and} \quad D^{(-1)} = 1 - \mathfrak{b} (0), $$ see \cite[Section 5.2]{HKV-ncpolyfact}. Finally, by the construction of the minimal FM realization of the NC Sarason function, $\mathfrak{a}$ of $\mathfrak{b}$ in Proposition \ref{ratfactor}, $\mathfrak{a}$ has the finite FM realization, $(A ^{(-1)}, \frac{1}{D'} B , \widetilde{C}, \widetilde{D} )$. A finite and generally non-minimal FM realization for $\mathfrak{D} = \mathfrak{a} (1 -\mathfrak{b} )^{-1}$ is then given by $(\hat{A}, \hat{B} , \hat{C} , \hat{D} )$ where 
$$ \hat{A} _j = \begin{pNiceMatrix} A^{(-1)} _j & \frac{1}{2D'} B_j C \\ & A_j \end{pNiceMatrix}. $$ In particular, the minimal FM realization size of $\mathfrak{D}$ is at most twice that of $\mathfrak{H}$.
\end{proof}

In \cite{JM-ncFatou} and \cite{JM-ncld}, we developed the Lebesgue decomposition of any positive NC measure, $\mu \in  \left( \mathscr{A} _d   \right) ^\dag _+$ with respect to NC Lebesgue measure, $m \in  \left( \mathscr{A} _d   \right) ^\dag _+$. Namely, $\mu = \mu _{ac} + \mu _s$, where $\mu _{ac}$ is a absolutely continuous with respect to $m$ and $\mu _s$ is singular with respect to $m$. 
\begin{cor} \label{ncratFatcor}
Let $\mathfrak{b} \in [ \mathbb{H} ^\infty _d ] _1$ be a contractive NC rational left multiplier. Then the NC Clark measure, $\mu _\mathfrak{b} \in  \left( \mathscr{A} _d   \right) ^\dag _+$ is singular if and only if $\mathfrak{b}$ is inner.
\end{cor}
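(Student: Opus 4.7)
The plan is to derive the two directions from the inner/non-CE dichotomy of Theorem \ref{CEisinner} combined with the NC Fatou--Lebesgue decomposition machinery of \cite{JM-ncFatou,JM-ncld}. Throughout, the key bridge is that for any $\fb \in [\mult]_1$, the absolutely continuous part $(\mu_\fb)_{ac}$ of the NC Clark measure has NC Radon--Nikodym derivative given, as a (generally unbounded) positive semi-definite left Toeplitz operator, by
$$T \; = \; (I - \fb(R)^*)^{-1}\,(I - \fb(R)^*\fb(R))\,(I - \fb(R))^{-1},$$
interpreted in the strong resolvent sense used in the proof of Corollary \ref{answer} above. Indeed, this is precisely the formula obtained by approximating with $\fb(rR)$ and passing $r\uparrow 1$ in the NC Fatou theorem, exactly as was done when converting a positive Toeplitz $T$ to its associated $\fb_T \in [\mult]_1$. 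Thus $\mu_\fb$ is singular if and only if $T = 0$.

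For the direction $\fb$ inner $\Rightarrow \mu_\fb$ singular, I would simply note that $\fb$ inner means $\fb(R)^*\fb(R) = I$, so the central factor $I - \fb(R)^*\fb(R)$ vanishes identically. The corresponding approximating operators $T_r$ in the proof of Corollary \ref{answer} then also vanish as $r \uparrow 1$, so the strong resolvent limit is $T = 0$, whence $(\mu_\fb)_{ac} = 0$ and $\mu_\fb = (\mu_\fb)_s$ is singular.

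For the converse direction, I would argue the contrapositive using Theorem \ref{CEisinner} (and the NC rational Fej\'er--Riesz theorem \ref{ncFR}): if $\fb \in [\mult]_1$ is NC rational but not inner, then by Theorem \ref{CEisinner} $\fb$ is non-column-extreme, and its NC rational Sarason outer function $\fa \in [\mult]_1$ is non-zero and satisfies
$$I - \fb(R)^* \fb(R) \; = \; \fa(R)^* \fa(R).$$
Substituting this into the displayed formula for $T$ and using Theorem \ref{ncFR}, one obtains
$$T \; = \; \fD(R)^*\fD(R), \qquad \fD(R) := \fa(R)(I - \fb(R))^{-1} \in \mult,$$
a non-zero bounded positive left Toeplitz operator. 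By the NC Fatou theorem, $T$ is then the NC Radon--Nikodym derivative of $(\mu_\fb)_{ac}$, and since $T \neq 0$ the absolutely continuous part of $\mu_\fb$ is non-trivial; hence $\mu_\fb$ is not singular.

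The only place requiring real care is the inner $\Rightarrow$ singular implication, where one must justify passing to the strong resolvent limit for $T_r$ when $I - \fb(R)$ may fail to be boundedly invertible in the limit; this is however handled by exactly the same argument given in the proof of Corollary \ref{answer} (the net $T_r$ is identically zero, so any reasonable limit is zero). The rest is a direct translation of the dichotomy of Theorem \ref{CEisinner} into the language of NC measures.
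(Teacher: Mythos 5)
Your proposal is on the right track conceptually but has a genuine error in one direction and a gap in the other, and it is also considerably more elaborate than the paper's proof, which simply invokes a prior result and observation from \cite{JM-ncFatou}.

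For the direction ``$\fb$ inner $\Rightarrow \mu_\fb$ singular,'' your key step is the claim that the approximating operators $T_r$ are ``identically zero, so any reasonable limit is zero.'' This is false: for $0<r<1$ the tuple $\fb(rR)$ is a strict contraction, so the central factor $I-\fb(rR)^*\fb(rR)$ is strictly positive and $T_r>0$ (unless $\fb$ is constant). The fact that the strong \emph{resolvent} limit of these positive operators is nonetheless $0$ when $\fb$ is inner is exactly the nontrivial content of \cite[Corollary 6.29]{JM-ncFatou}, which the paper simply cites rather than re-derives. Passing the limit requires genuine work because $I-\fb(R)$ may fail to be boundedly invertible and the $T_r$ need not converge in SOT.

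For the converse, you apply Theorem \ref{ncFR} to obtain $T=\fD(R)^*\fD(R)$ with $\fD=\fa(1-\fb)^{-1}\in\mult$ bounded. But Theorem \ref{ncFR} has the standing hypothesis that $T=\nbre\fH(R)$ with $\fH\in\mult$ \emph{bounded}, and the Herglotz transform $\fH_\fb=(I+\fb)(I-\fb)^{-1}$ underlying the Clark measure need not be a bounded multiplier. You therefore cannot directly conclude that the resolvent-limit RN derivative $T$ is bounded, nor that $\fD\in\mult$; establishing these facts carefully for NC rational $\fb$ is the substance of Theorem \ref{ncratRN}, which appears \emph{after} this corollary in the paper. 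The paper sidesteps all of this: it takes the bounded left Toeplitz operator $T:=2I-\fb(R)-\fb(R)^*=\nbre(2(1-\fb))(R)$, which is $\nbre\fH(R)$ for the manifestly bounded Herglotz multiplier $\fH=2(1-\fb)\in\mult$, notes that it has the factorizable minorant $T_0=(I-\fb(R)^*)(I-\fb(R))>0$ when $\fb$ is not inner, applies the NC rational Fej\'er--Riesz Theorem \ref{ncFR} to factor $T$, and then appeals to an equivalence observed in the proof of \cite[Corollary 6.29]{JM-ncFatou} (factorizability of $2I-\fb(R)-\fb(R)^*$ is exactly what gives the converse implication). If you want to argue via the RN derivative as you do, you should either establish boundedness of $T$ first or else argue only at the level of quadratic forms, and you would in effect be reproving part of Theorem \ref{ncratRN}.
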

\begin{proof}
In \cite[Corollary 6.29]{JM-ncFatou}, we proved that if $b \in [ \mathbb{H} ^\infty _d ] _1$ is inner then $\mu _b$ is singular with respect to NC Lebesgue measure, $m$. Following the proof, we observed that if $T := 2I - b(R) - b(R) ^* >0$ is a factorable left Toeplitz operator then the converse also holds and we further proved that $T \geq T_0 = (I - b(R) ^* ) (I - b(R) ) > 0$ so that $T$ has a factorable left Toeplitz minorant. By the NC rational Fej\'er--Riesz Theorem, if $b = \mathfrak{b}$ is NC rational, $T$ is factorable and the claim follows. 
\end{proof}

\begin{thm} \label{ncratRN}
Let $\mathfrak{b} \in [ \mathbb{H} ^\infty _d ] _1$ be a non-CE and contractive NC rational left multiplier. Then $\mu _{ac} := \mu _{\mathfrak{b} ; ac}$ is of type$-L$ and 
$$ \mu _{ac} (L^\omega ) = \left\langle \mathfrak{h}, L^\omega \mathfrak{h}\right\rangle_{\mathbb{H} ^2} = \left\langle 1, \mathfrak{h}^\mathrm{t} (R)^* \mathfrak{h}^\mathrm{t} (R) L^\omega 1\right\rangle_{\mathbb{H} ^2}, $$ where $\mathfrak{h} ^\mathrm{t} = \mathfrak{a} (1 - \mathfrak{b} ) ^{-1} \in \mathbb{H} ^\infty _d$, with $\mathfrak{a}$ the outer Sarason function of $\mathfrak{b}$. The function $\mathfrak{h} := \mathfrak{h} ^\mathrm{t} (R) 1 \in \mathbb{H} ^2 _d$ is NC rational, hence in $\mathbb{H} ^\infty _d$ and $T = \mathfrak{h}^\mathrm{t} (R) ^* \mathfrak{h}^\mathrm{t} (R)$ is a bounded left Toeplitz operator.
\end{thm}
In the above statement, recall that a Gelfand--Naimark--Segal (GNS) construction applied to any $\mu \in  \left( \mathscr{A} _d   \right) ^\dag _+$ produces a GNS--Hilbert space, $\mathbb{H} ^2 _d (\mu)$, and a GNS--row isometry, $\Pi _\mu$, acting on $\mathbb{H} ^2 _d (\mu)$ \cite{JM-freeCE,JM-ncld}. A positive NC measure, $\mu \in  \left( \mathscr{A} _d   \right) ^\dag _+$, is said to be of \emph{type$-L$}, if $\Pi _\mu$ is jointly unitarily equivalent to $L$ \cite{JM-ncld}.
\begin{proof}
By the NC Fatou theorem,  if 
$$ T_r := (I - \mathfrak{b} (rR) ^*)  ^{-1}  (I - \mathfrak{b} _r (R) ^* \mathfrak{b} _r(R) ) (I - \mathfrak{b} (rR) ) ^{-1}, $$ then $(I + T_r ) ^{-1} \stackrel{SOT}{\rightarrow} ( I +T ) ^{-1}$ where 
$$ \mu _{ac} (L^\omega ) = \left\langle \sqrt{T} 1, \sqrt{T} L^\omega 1\right\rangle_{\mathbb{H} ^2}, $$ and $T$ is a closed, positive semi-definite and densely--defined left Toeplitz operator with the free polynomials as a core, see \cite{JM-ncFatou}. Here,
$$ I + T_r = (I - \mathfrak{b} (rR) ^* ) ^{-1} \left( 2I - \mathfrak{b} (rR) - \mathfrak{b} (rR) ^* \right) (I - \mathfrak{b} (R) ) ^{-1}. $$ By the NC rational Fej\'er--Riesz theorem, 
$$ I + T_r = (I - \mathfrak{b} (rR) ^* ) ^{-1} \mathfrak{d} _r (R) ^* \mathfrak{d} _r (R) (I - \mathfrak{b} (rR ) ) ^{-1}, $$ where $\| \mathfrak{d} _r (R) \| ^2 \leq 2$, $\mathfrak{d} \in \mathbb{H} ^\infty _d$ is NC rational and $\mathfrak{d} _r(R) ^* \mathfrak{d} _r (R) = 2I - \mathfrak{b} (rR) - \mathfrak{b} (rR) ^*$. Hence
$$ (I + T_r ) ^{-1} = (I - \mathfrak{b} (rR) ) \mathfrak{d} _r (R) ^{-1} \mathfrak{d} _r (R) ^{-*} (I - \mathfrak{b} (rR) ^* ). $$ 
Since $(I + T_r ) ^{-1} \stackrel{SOT}{\rightarrow} (I+T ) ^{-1}$, taking inner products against NC kernels shows that $\mathfrak{d} _r ^\mathrm{t} (Z) ^{-1}$ and hence $\mathfrak{d} _r ^\mathrm{t} (Z)$ converges pointwise to some $\widetilde{\mathfrak{d}} ^\mathrm{t} (Z)$ in the unit row-ball. This and uniform boundedness of the net $\mathfrak{d} _r (R)$ implies $WOT$ convergence of $\mathfrak{d} _r (R)$ to some $\widetilde{\mathfrak{d}} (R)$, $\widetilde{\mathfrak{d}} \in \mathbb{H} ^\infty _d$. In particular, it follows that 
\begin{equation} (I +T ) ^{-1} = (I - \mathfrak{b} (R) ) \widetilde{\mathfrak{d}}  (R) ^{-1} \widetilde{\mathfrak{d}} (R) ^{-*} (I - \mathfrak{b} (R) ^* ). \label{eq1} \end{equation}

On the other hand, setting 
$\tau _r = 2I - \mathfrak{b} (rR) - \mathfrak{b} (rR) ^*$ for $0 < r \leq 1$, consider the NC measure 
$$ \mu _r (L^\omega ) := \left\langle  1, \tau _r L^\omega 1\right\rangle_{\mathbb{H} ^2}. $$ This has left free Herglotz--Riesz transform:
\begin{eqnarray} \mathfrak{H} _r (Z) &:= & 2 \sum _\omega Z^\omega \left\langle L^{\omega ^\mathrm{t}}1, \tau _r 1 \right\rangle_{\mathbb{H} ^2 } - I_n \left\langle 1, \tau _r 1\right\rangle_{\mathbb{H} ^2} \nonumber \\
& = & 4I - 2 \overline{\mathfrak{b} (0) } I_n - 2 \mathfrak{b} (r Z ) - 2I + (\mathfrak{b} (0) + \overline{\mathfrak{b} (0)}) I_n \nonumber \\
& = & (2 + 2i \, \mathrm{Im} \, \mathfrak{b} (0) ) I_n - 2 \mathfrak{b} (r Z), \nonumber \end{eqnarray} so that $\mathfrak{H} _r (Z) = \mathfrak{H} (rZ)$ is NC rational. This has inverse Cayley transform
$$  \mathfrak{B} (rZ) = ( \mathfrak{H} (rZ) - I_n ) (\mathfrak{H} (rZ) + I_n ) ^{-1}, \quad \quad r \in (0 , 1], $$ for some NC rational $\mathfrak{B} \in [\mathbb{H} ^\infty _d ] _1$. Since $\mathfrak{B} (rR) \stackrel{SOT-*}{\rightarrow} \mathfrak{B} (R)$ by \cite[Lemma 4]{JM-ncFatou}, and $\mathfrak{C} := \left( \begin{smallmatrix} \mathfrak{B} \\ \mathfrak{A} \end{smallmatrix} \right)$ is inner since $\mathfrak{B}$ is NC rational, Proposition \ref{afunconverge} implies that if $\mathfrak{A} _r$ is the NC rational Sarason outer function of $\mathfrak{B} (r Z)$ for $r \in (0 , 1]$ that $\mathfrak{A} _r (R) \stackrel{SOT-*}{\rightarrow} \mathfrak{A} (R)$. In particular it follows that 
$$ \mathfrak{d} _r ^\mathrm{t} (Z) =  (I_n - \mathfrak{B} ^\mathrm{t} (rZ) ) ^{-1} \mathfrak{A} _r ^\mathrm{t} (Z) \rightarrow \mathfrak{d} ^\mathrm{t} (Z) =  (I_n - \mathfrak{B} ^\mathrm{t} (Z) ) ^{-1} \mathfrak{A} ^\mathrm{t} (Z). $$ This pointwise convergence in the row ball and uniform norm boundedness implies that $\mathfrak{d} _r (R) \stackrel{WOT}{\rightarrow} \mathfrak{d} (R)$. Hence,
\begin{eqnarray} (I + T ) ^{-1} & = & (I - \mathfrak{b} (R) ) \widetilde{\mathfrak{d}}  (R) ^{-1} \widetilde{\mathfrak{d}} (R) ^{-*} (I - \mathfrak{b} (R) ^* ) \nonumber \\
& = & SOT-\lim \, (I - \mathfrak{b} (rR) ) \mathfrak{d} _r  (R) ^{-1} \mathfrak{d} _r (R) ^{-*} (I - \mathfrak{b} (rR) ^* ). \nonumber \end{eqnarray} Taking inner products against NC Szeg\"o kernels and using that $\mathfrak{d} _r ^\mathrm{t} (Z)$ converges pointwise to $\mathfrak{d} ^\mathrm{t} (Z)$ where 
$$ \mathfrak{d} (R) ^* \mathfrak{d} (R) = \tau = 2I - \mathfrak{b} (R) - \mathfrak{b} (R) ^*, $$ shows that $\mathfrak{d} = \widetilde{\mathfrak{d} }$. In conclusion,
$$ I +T = (I - \mathfrak{b} (R) ^* ) ^{-1} \underbrace{\tau}_{= \mathfrak{d} (R) ^* \mathfrak{d} (R) } (I -\mathfrak{b} (R) ) ^{-1}, $$ so that 
\begin{eqnarray} T & = &  (I - \mathfrak{b} (R) ^* ) ^{-1} (\mathfrak{d} (R) ^* \mathfrak{d} (R) - (I- \mathfrak{b} (R) ^*) (I-\mathfrak{b} (R)) ) (I -\mathfrak{b} (R) ) ^{-1} \nonumber \\
& = & (I - \mathfrak{b} (R) ^* ) ^{-1} (I - \mathfrak{b} (R) ^* \mathfrak{b} (R) ) (I -\mathfrak{b} (R) ) ^{-1} \nonumber \\
& = & (I - \mathfrak{b} (R) ^* ) ^{-1} \mathfrak{a} (R) ^* \mathfrak{a} (R) (I - \mathfrak{b} (R ) ) ^{-1}. \nonumber \end{eqnarray} 
By Proposition \ref{ratfactor}, $\mathfrak{a} \in [\mathbb{H} ^\infty _d ] _1$ is outer and NC rational. By \cite[Theorem 1, Theorem 2]{JMS-ncratClark}, $\mu _{ac} := \mu _{\mathfrak{b} ;ac}$ is purely of type$-L$. Hence $\mu _{ac} (L^\omega ) = m_\mathfrak{h} (L^\omega ) = \left\langle \mathfrak{h}, L^\omega \mathfrak{h}\right\rangle_{\mathbb{H} ^2}$ is a positive symmetric vector functional for some $\mathfrak{h} \in \mathbb{H} ^2 _d$, \cite[Corollary 6.23]{JM-ncld}. By \cite{JM-freeSmirnov}, $\mathfrak{h} ^\mathrm{t} (R) := M^R _{\mathfrak{h}}$ is a closed, densely--defined right multiplier with the free polynomials as a core, so that $\mathfrak{h} ^\mathrm{t} (R) ^* \mathfrak{h} ^\mathrm{t} (R) \geq 0$ is a closed positive semi-definite and self-adjoint operator. By the uniqueness of the Riesz representation of closed, densely--defined and positive semi-definite sesquilinear forms, \cite[Chapter VI, Theorem 2.1, Theorem 2.23]{Kato}, it follows that 
$$ \mathfrak{h} ^\mathrm{t} (R) ^* \mathfrak{h} ^\mathrm{t} (R) = T = (I - \mathfrak{b} (R) ^* ) ^{-1} \mathfrak{a} (R) ^* \mathfrak{a} (R) (I - \mathfrak{b} (R ) ) ^{-1}. $$ The unbounded Douglas factorization lemma, \cite[Theorem 2]{DFL} then implies that there are contractions $C, D$ so that 
$$ C \mathfrak{h} ^\mathrm{t} (R) = \mathfrak{a} (R) (I - \mathfrak{b} (R) ) ^{-1} \quad \mbox{and} \quad D \mathfrak{a} (R) (I -\mathfrak{b} (R) ) ^{-1} = \mathfrak{h} ^\mathrm{t} (R). $$ Since $\mathfrak{a} (R) (I -\mathfrak{b} (R) ) ^{-1}$ has dense range, we conclude that $C, D$ commute with the left shifts and $CD = DC = I$. Since both $C,D$ are contractive, it follows that $C, D$ must both be isometries and $D=C^*$ so that $C$ is unitary. By \cite[Theorem 1.2]{DP-inv}, $C = C(R)$ and $D = D(R) \in \mathscr{R} ^\infty _d$ are unitary right multipliers, so that $C(R) = \zeta I$, $\zeta \in \partial \mathbb{D}$ is constant by \cite[Corollary 1.5]{DP-inv}. In conclusion we can assume, without loss in generality that 
$$ \mathfrak{h} ^\mathrm{t} (R) = \mathfrak{a} (R) (I -\mathfrak{b} (R) ) ^{-1}. $$ Hence $\mathfrak{h} = \mathfrak{a} (R) (I -\mathfrak{b} (R) ) ^{-1} 1 \in \mathbb{H} ^2 _d$ is NC rational so that $\mathfrak{h} ^\mathrm{t} \in \mathbb{H} ^\infty _d$ by \cite[Theorem A]{JMS-ncrat}. In particular, $T = \mathfrak{h} ^\mathrm{t} (R) ^* \mathfrak{h} ^\mathrm{t} (R)$ is a bounded left Toeplitz operator. 
\end{proof}

Suppose that $\mathfrak{H} \sim \mathbb{H} ^\infty _d$ is an NC rational Smirnov multiplier of Fock space. That is, $\mathfrak{H} \in \mathscr{O} (\mathbb{B} ^d _\mathbb{N} )$ is a locally bounded NC function and left multiplication by $\mathfrak{H}$ is a densely--defined and closed operator on its maximal domain in $\mathbb{H} ^2 _d$. Equivalently, right multiplication by $\mathfrak{H} ^\mathrm{t}$, $M^R _{\mathfrak{H} ^\mathrm{t}} = \mathfrak{H} (R)$, is densely--defined and closed on its maximal domain. The above NC Radon--Nikodym formula allows us to extend our NC rational Fej\'er--Riesz theorem to the case where $\mathrm{Re} \, \mathfrak{H} (R) \geq 0$ and $\mathfrak{H} \sim \mathbb{H} ^\infty _d$ belongs to the Smirnov class. Here, given any $H \sim \mathbb{H} ^\infty _d$, we say that $\mathrm{Re} \, H (R) \geq 0$ if this holds in the \emph{quadratic form sense}: 
$$ q_{\mathrm{Re} \, H} (p, q ) := \frac{1}{2}\left\langle H (R)^* p, q\right\rangle_{\mathbb{H} ^2} + \frac{1}{2} \left\langle p, H (R) ^* q\right\rangle_{\mathbb{H} ^2} \geq 0, $$ for any $p,q \in \mathbb{C} \{ \mathbb{\mathfrak{z}} \} $. That is, $H$ is positive in this quadratic form sense if and only if $H$ is an NC Herglotz function. Suppose then that $\mathfrak{H}$ is an NC rational Herglotz function. This also defines a positive NC measure on the free disk system by $\mu _\mathfrak{H} (L^\omega) := q _{\mathrm{Re} \, \mathfrak{H}} (1, L^\omega 1)$. In fact, all positive NC measures are obtained in this way \cite{JM-freeCE}. 

\begin{cor}
Let $\mathfrak{H} \sim \mathbb{H} ^\infty _d$ be an NC Herglotz and NC rational Smirnov multiplier so that $\mathrm{Re} \, \mathfrak{H} \geq 0$ in the quadratic form sense, and suppose that $q_{\mathrm{Re} \, \mathfrak{H}}$ is a closeable form, \emph{i.e.} that $\mu _\mathfrak{H} \in  \left( \mathscr{A} _d   \right) ^\dag _+$ is an absolutely continuous NC measure. Then, $\mathrm{Re} \, \mathfrak{H}$ is factorable in the sense that 
$$ q _{\mathrm{Re} \, \mathfrak{H}} (p , p') = \left\langle p, T p'\right\rangle_{\mathbb{H} ^2}; \quad \quad p, p' \in \mathbb{C} \{ \mathbb{\mathfrak{z}} \} , $$ where 
$$ T = (I - \mathfrak{b} (R) ^* ) ^{-1} \mathfrak{a} (R) ^* \mathfrak{a} (R) (I - \mathfrak{b} (R) ) ^{-1}, \quad \quad \mathfrak{b} = (\mathfrak{H} -I)(\mathfrak{H} +I ) ^{-1} \in [\mathbb{H} ^\infty _d ]_1, $$ $\mathfrak{b}$ is NC rational and non-CE, and $\mathfrak{a}$ is the NC rational outer Sarason function of $\mathfrak{b}$. Moreover, $T$ and $\mathfrak{a} (I-\mathfrak{b}) ^{-1} \in \mathbb{H} ^\infty _d$ are bounded.
\end{cor}
\begin{proof}
Since we assume that $\mathfrak{H}$ is NC rational and Herglotz, its Cayley transform, $\mathfrak{b} \in [\mathbb{H} ^\infty _d ] _1$ is NC rational and contractive. Since we assume that $\mu _\mathfrak{H}$ is absolutely continuous, $\mathfrak{b}$ must be non-CE, so that it has a non-zero, NC rational Sarason outer function, $\mathfrak{a} \in [\mathbb{H} ^\infty _d ] _1$ so that $\left( \begin{smallmatrix} \mathfrak{b} \\ \mathfrak{a} \end{smallmatrix} \right)$ is inner and $\mu _{\mathfrak{H}} = \mu _\mathfrak{b}$ is the NC Clark measure of $\mathfrak{b}$. The claim now follows from the previous theorem.
\end{proof}


\bigskip
  \footnotesize

  M.~T.~Jury, \textsc{Department of Mathematics, University of Florida}\par\nopagebreak
  \textit{E-mail address:} \texttt{mjury@ad.ufl.edu}

  \medskip

  R.~T.~W.~Martin, \textsc{Department of Mathematics, University of Manitoba}\par\nopagebreak
  \textit{E-mail address:} \texttt{Robert.Martin@umanitoba.ca}

\vspace{1cm}

\end{document}